\documentclass[11pt,reqno]{amsart}
\usepackage{amscd}
\usepackage{amsfonts}
\usepackage[all]{xy}
\usepackage{amsmath,amsthm,hyperref}
\usepackage{amsmath,amssymb,amsthm,latexsym}
\usepackage{color}
\usepackage{amsmath}
\usepackage{bm}
\usepackage{dsfont}
\usepackage{changes}
\usepackage{enumerate}
\usepackage{graphicx} 
\numberwithin{equation}{section}
\newtheorem{theorem}{Theorem}[section]
\newtheorem{proposition}[theorem]{Proposition}
\newtheorem{definition}[theorem]{Definition}
\newtheorem{corollary}[theorem]{Corollary}
\newtheorem{lemma}[theorem]{Lemma}
\usepackage{caption}
\usepackage{float}
\usepackage{graphicx}
\usepackage{CJK}
\usepackage{amsmath,amsfonts,mathrsfs,amssymb}
\usepackage{indentfirst}
\usepackage{caption}

\usepackage{multicol}
\usepackage{float}
\usepackage{longtable}
\usepackage[numbers,sort&compress]{natbib}

\usepackage{xcolor}
\usepackage{indentfirst}
\usepackage{amscd}
\usepackage{amsfonts}
\usepackage{amsmath,amsthm,hyperref}
\usepackage[all]{xy}
\usepackage{amsmath,amsthm,hyperref}
\usepackage{amsmath,amssymb,amsthm,latexsym}
\usepackage{amscd}
\theoremstyle{remark}
\newtheorem{remark}[theorem]{\bf Remark}

\newtheorem{example}[theorem]{\bf Example}

\newcommand{\R}{\mathbb{R}}
\newcommand{\rn}{\mathbb{R}^N}

\newcommand{\sn}{\mathbb{S}^N}
\newcommand{\C}{\mathbb{C}}
\newcommand{\Z}{\mathbb{Z}}

\newcommand{\B}{\mathbb{B}}
\newcommand{\BB}{\mathcal{B}}
\newcommand{\DD}{\mathcal{C}}
\newcommand{\T}{\mathbb{T}}
\newcommand{\M}{\mathbb{M}}
\newcommand{\CC}{\mathcal{C}}
\newcommand{\NN}{\mathcal{N}}
\newcommand{\<}{\langle}
\newcommand{\ra}{\rangle}
\newcommand{\dd}{\mathrm d} 
\newcommand{\TT}{\mathcal{T}}
\newcommand{\Tg}{\mathcal{T}_g}
\newcommand{\LL}{\widetilde{\Lambda}^*}

\newcommand{\HT}{\CJKfamily{hei}}

\newcommand{\W}{\mathcal{W}}

\newcommand{\TB}{\mathbb{T}^2_b}

\newcommand{\KB}{\mathbb{K}^2_{b}}
\newcommand*{\innerproduct}[2]{%
    \if@display
        \left\langle #1,#2\right\rangle
    \else
        \langle #1,#2 \rangle
    \fi}
\begin{document}

\title{Willmore Energy Estimates for Klein Bottles}
\author{Ruijie Ni, Peng Wang}
\address{School of Mathematics and Statistics, FJKLAMA, Key Laboratory of Analytical Mathematics and Applications (Ministry of Education), Fujian Normal University, Fuzhou, China}
\email{345784487@qq.com, niruijie721@sina.com}

\address{School of Mathematics and Statistics, FJKLAMA, Key Laboratory of Analytical Mathematics and Applications (Ministry of Education), Fujian Normal University, Fuzhou, China}\email{pengwang@fjnu.edu.cn, netwangpeng@163.com}

\date{\today}

\maketitle

\begin{abstract}
In this note, we provide an estimate for the Willmore energy of Klein bottles in $\rn$, building on the ideas of Li-Yau and Montiel-Ros for $2$-tori. In particular, we prove that the Willmore energy $\W(\phi)> 6\pi$ for any conformal branched immersion $\phi:\KB\rightarrow \sn$, where $\KB$ is a flat Klein bottle with $0.350\lesssim b \lesssim 0.755$. This confirms partially a conjecture of Kusner. Moreover, for a flat immersion $\psi:\KB\rightarrow \rn$, we derive a lower bound of $\W(\psi)$, which confirms a conjecture of Hirsch-M\"{a}der-Baumdicker for flat Klein bottles in $\rn$. We also construct a smooth family of flat Klein bottles in $\R^5$,  with Willmore energy between $6.912\pi$ and $8\pi$.
  \end{abstract}

  {\bf{Keywords}}: Klein bottles; Kusner conjecture;  Hirsch-M\"ader-Baumdicker conjecture; Willmore energy.\vspace{2mm}

 {\bf {MSC 2020:}}   53A31, 53A10, 53C42 \vspace{2mm}

 \section{introduction}
The Willmore problem for surfaces in space forms is an important topic in global differential geometry. Given a closed surface $M$ in Euclidean space $\rn$, its Willmore energy is defined by
$$
\mathcal{W}(M) = \int_M |H|^2\, \dd M,
$$
where $H$ denotes the mean curvature of the surface and $\dd M$ is the area form of $M$. Intuitively, the Willmore energy measures the total bending or curvature complexity of the surface, which can be tricked back to Germain.  The famous Willmore conjecture  \cite{W-1965} states that for any torus $T^2$ embedded in the three-dimensional sphere $\mathbb{S}^3$, its Willmore energy satisfies the inequality $\mathcal{W}(T^2) \geq 2\pi^2,$
with equality if and only if the torus is M\"{o}bius congruent to the Clifford torus. Li and Yau introduced the notion of   conformal area in \cite{L-Y} and  solved the Willmore conjecture when  $M$ is conformally equivalent to a flat torus with lattice generated by $\{(1,0),(a,b)\}$, where $0\leq a \leq \frac{1}{2}$ and $\sqrt{1-a^2}\leq b\leq 1$.
In \cite{M-R}, Montiel and Ros also employed the conformal area to estimate the Willmore energy and extended the scope of the result of Li and Yau to the set $A=\{(a,b)\mid 0\leq a\leq \frac{1}{2},b>0 , a^2+b^2\geq1\,\text{and}\,(a-\frac{1}{2})^2+(b-1)^2\leq \frac{1}{4}\}$. Similar results were obtained independently by Bryant (see  \cite{B-1988,B-2015}). In 2014, the Willmore conjecture was solved by Marques and Neves (\cite{M-N2}, \cite{M-N1},) using geometric measure theory and minimal surface methods, marking a significant advancement in geometric analysis. We refer to \cite{M-N1,WangWang2022} for recent surveys.

Closely related to the Willmore conjecture is the following conjecture proposed by Kusner: 
If a closed surface $M$ in $\sn$ has $\W(M)\le 6\pi$, then $M$ is either a sphere or a real projective plane; the latter happens if and only if $M$ is the Veronese minimal surface in $\mathbb S^4$ (up to M\"obius transformation of $\sn$)  \cite{K-1996}. This conjecture is mainly concerned with lower bounds for the Willmore energy of closed surfaces of arbitrary genus. In fact, Kusner obtained upper estimates for the infimum of the Willmore energy for orientable, closed surfaces in each genus, and more generally in each regular homotopy class of closed surfaces in $\mathbb{S}^3$, by constructing comparison surfaces in \cite{K-1989}. Indeed, the seminal works of Li-Yau \cite{L-Y} and Marques-Neves \cite{M-N2} give a positive answer to this conjecture for all closed surfaces in $\mathbb{S}^3$. However, lower estimates for the Willmore energy infimum of surfaces of genus $\geq1$ in higher co-dimension, remain open.

As a higher genus surface whose geometric properties have been extensively studied, the Klein bottle has seen many recent results on estimates of its Willmore energy. 
 Breuning, Hirsch and M\"ader-Baumdicker show that there exists a smooth conformal immersion $\tilde{f}_b: \mathbb{T}_b \to \mathbb{R}^4$ that is twistor holomorphic and is the oriented double cover of Klein bottle $\mathbb{K}_b$. And the corresponding immersed Klein bottle $f_b:\mathbb{K} _b \to \mathbb{R}^4$ is an embedded Willmore surface with Willmore energy $\W(f_b)=8\pi$ and Euler normal number $e(\nu)=\pm 4$. See \cite{B-H-M} for more details.
 In \cite{H-M}, Hirsch and M\"ader-Baumdicker proved that in the conformal class of $\widetilde{\tau}_{3,1}:K\to \mathbb S^4\subset \sn$,
the surface $\widetilde{\tau}_{3,1}$ is the unique minimizer of the Willmore energy in $\sn$, up to conformal transformations of $\sn$. They also conjecture that the unique minimizer among all immersed Klein bottles in $\sn,n\geq 4$, is $\tilde{\tau}_{3,1}$. In addition, for every $b>0$, let $\mathbb{T}_b=\mathbb{C}/\Gamma$ with $\Gamma=\langle 1, \mathrm{i}b\rangle$.

In this paper, we first use the conformal area method of Li-Yau \cite{L-Y} and Montiel-Ros \cite{M-R} to study the conformal classes of Klein bottles, represented by the flat Klein bottles $\KB$. More precisely, we reduce the problem to estimating the area of conformal immersions into spheres satisfying the balance condition, and obtain a lower bound for the conformal area of flat Klein bottles. The resulting estimate splits into a contribution from finitely many Fourier modes and a remainder term, denoted by $R$. By exploiting the symmetry induced by the deck transformation of the Klein bottle, together with the conformality condition, we further show that $R$ admits an explicit positive lower bound, which leads to the following theorem. 
  \begin{theorem} \label{thm conf imm}
    Let $\psi_K:\KB\longrightarrow \rn$ be a  branched conformal immersion. If  $b\in\left(b_1,b_2\right)$, then $\mathcal{W}(\psi_K)> 6\pi.$
   Here $b_1\approx0.350$ and $b_2\approx0.755$ are the two positive solutions to 
$
\frac{\pi^2b}{4b^2+1}\bigl(8+2R_*(b)\bigr)=6\pi,
$ where $R_*(b)$ is given in Theorem \ref{thm:improved-lower-bound}.
\end{theorem}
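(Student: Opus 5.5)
The plan is to run the Li--Yau conformal area argument on the orientable double cover $\TB=\C/\langle 1,\mathrm{i}b\rangle$ of $\KB$. The Willmore energy is conformally invariant, and for a branched conformal immersion into $\rn$ or $\sn$ Li--Yau give $\W(\psi_K)\ge A_c(\psi_K)$, the conformal area. By the Hersch/Li--Yau balancing trick, after composing with a suitable conformal transformation of $\sn$ we may assume the map $\phi=(\phi^1,\dots,\phi^{N+1}):\KB\to\sn$ satisfies $\int_{\KB}\phi^j\,\dd A=0$ for all $j$ with respect to the flat metric. It then suffices to show that every such balanced conformal map has area greater than $6\pi$ when $b\in(b_1,b_2)$.

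Lift $\phi$ to $\tilde\phi$ on $\TB$. This lift is invariant under the deck involution $\sigma(x+\mathrm{i}y)=x+\tfrac12-\mathrm{i}y$ (or the analogous orientation-reversing fixed-point-free isometry), and its area equals twice that of $\phi$. Since $|\tilde\phi|=1$, we have $\sum_j(\tilde\phi^j)^2=1$. Conformality gives $\mathrm{Area}(\tilde\phi)=\tfrac12\int_{\TB}|\nabla\tilde\phi|^2$. Expand each coordinate in Fourier modes $e^{2\pi\mathrm{i}(mx+ny/b)}$ with eigenvalues $4\pi^2(m^2+n^2/b^2)$. Balancing kills the zero mode, and $\sigma$-invariance forces the coefficients of $(m,n)$ and $(m,-n)$ to be related by the phase $(-1)^m$. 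In particular, a mode with $m$ odd and $n=0$ must vanish.

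Write $\tfrac12\int|\nabla\tilde\phi|^2=\sum \lambda_{m,n}|c_{m,n}|^2$. Split this into the lowest admissible modes, namely $(0,\pm1)$ with $\lambda=4\pi^2/b^2$, $(\pm1,\pm1)$, and $(\pm2,0)$ with $\lambda=16\pi^2$, plus the remaining high-frequency part. Then use $\sum|c_{m,n}|^2=|\TB|=b$. The part from finitely many modes yields the main term, and I expect the prefactor $\tfrac{\pi^2 b}{4b^2+1}\cdot 8$ to come from optimizing the weights between the $(0,1)$-type and $(2,0)$-type eigenvalues, i.e. essentially the harmonic combination of $4\pi^2/b^2$ and $16\pi^2$. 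The remainder $R$ is the excess coming from the fact that $\tilde\phi$ cannot be supported only on the lowest modes. This is the point where Theorem \ref{thm:improved-lower-bound} is to be invoked. It provides $R\ge R_*(b)$, and that inequality uses the conformality relations $\langle\tilde\phi_z,\tilde\phi_z\rangle=0$ together with the $\sigma$-symmetry. Combining the pieces should give
\[
\W(\psi_K)\ \ge\ A_c(\phi)\ \ge\ \frac{\pi^2 b}{4b^2+1}\bigl(8+2R_*(b)\bigr).
\]

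The final step is elementary one-variable analysis. Let $F(b)=\frac{\pi^2b}{4b^2+1}(8+2R_*(b))$. We show $F(b)-6\pi$ has exactly two positive zeros $b_1\approx0.350$ and $b_2\approx0.755$, with $F>6\pi$ strictly between them, by checking that $F$ is unimodal or, more simply, by evaluating it numerically at an intermediate point and using monotonicity on each side. Strictness of the inequality comes from this strict bound. If equality in the conformal area bound is not already excluded, it also follows because equality would force a minimal immersion by first eigenfunctions, which the mode computation rules out.

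The main obstacle is the remainder estimate $R\ge R_*(b)$: controlling the high modes using only the pointwise constraint $|\tilde\phi|=1$ and conformality. Since that is given by Theorem \ref{thm:improved-lower-bound}, the real work here is bookkeeping to make the prefactor match exactly. In particular, one must make sure the normalization between $\KB$ and its double cover and the factor $2$ in front of $R_*$ are consistent.
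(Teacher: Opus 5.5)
Your proposal is correct and follows essentially the same route as the paper. The paper also combines the Li--Yau inequality $\W(\psi_K)\ge A_c(\KB)$ with the balancing lemma, which produces $g$ with $\int_{\TB}\Tg\circ\psi\,\dd M_0=0$, and then applies Theorem \ref{thm:improved-lower-bound} to $\Tg\circ\psi_K$ (this is Corollary \ref{cor3.2}); strictness comes from $\mathcal R(b)>6\pi$ on $(b_1,b_2)$. One small correction to your heuristic, which does not matter once Theorem \ref{thm:improved-lower-bound} is used as a black box: the modes $(\pm1,\pm1)$ belong to the remainder $R$, not the main term, because the weight $m^2+4(n^2-1)$ vanishes only at $(0,\pm1)$ and $(\pm2,0)$.
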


We then focus on flat Klein bottles into Euclidean spaces. Recall that Chen proved that every such immersion satisfies $\mathcal W>2\pi^2$ \cite{Chen1981,Chen2015}, by use of the celebrated Chern-Lashof inequality for the total absolute curvature. By using the flatness condition to analyze the second fundamental form and integrating the resulting curvature estimate over the unit normal sphere, he obtained a pointwise lower bound for the mean curvature, which yields the above estimate after integration. Since $2\pi^2>6\pi$, Chen's result in particular verifies the Kusner conjecture for flat Klein bottles. This motivates us to seek sharper lower bounds for the Willmore energy and to consider the Hirsch-M\"ader-Baumdicker conjecture in the flat case. By expanding an isometric immersion in terms of eigenfunctions of the flat Klein bottle and using the corresponding Fourier decomposition, we derive relations among the Fourier coefficients from the isometric condition. These relations, together with further estimates for the remaining terms, lead to explicit lower bounds for the Willmore energy and ultimately yield a uniform lower bound that proves the Hirsch-M\"ader-Baumdicker conjecture for flat immersions. 
\begin{theorem}\label{thm H-B-M iso}
Let $\psi_K:(\KB,ds_0^2)\to\rn$ be a smooth isometric immersion. Then \footnote{    Here $\mathrm{K}(\cdot)$ and $\mathrm{E}(\cdot)$ denote the complete elliptic
integrals of the first and second kinds respectively \cite{J-N-P}.}
\[\mathcal W(\psi_K)>2.136\pi^2\approx 6.710\pi>
 6\pi \mathrm E\!\left(\frac{2\sqrt2}{3}\right)\approx6.682\pi.\] 
In particular, the  Hirsch–M\"{a}der-Baumdicker conjecture holds strictly for flat Klein bottles in $\rn$.
\end{theorem}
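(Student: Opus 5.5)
We lift $\psi_K$ to its orientable double cover, the flat torus $\TB=\C/\langle 1,\mathrm{i}b\rangle$ with coordinate $z=x+\mathrm{i}y$. We normalize the deck transformation of $\KB$ as $\sigma(x,y)=(x+\tfrac12,-y)$; the exact lattice convention used in the paper only changes constants, and the argument goes through unchanged. The lift $\psi=\psi_K\circ\pi:\TB\to\rn$ is then an isometric immersion of the flat metric $dx^2+dy^2$ satisfying $\psi\circ\sigma=\psi$. Since the metric is flat, $\Delta\psi=\vec H$ (with $H$ the mean curvature vector, normalized so that $|H|^2$ matches the definition of $\W$). Hence
\[
\W(\psi_K)=\tfrac12\int_{\TB}|\Delta\psi|^2\,\dd x\,\dd y .
\]
We expand $\psi=\sum_{\gamma}a_\gamma e^{2\pi\mathrm{i}\langle\gamma,z\rangle}$ over the dual lattice $\gamma=(m,n/b)$, with $a_\gamma\in\C^N$ and $a_{-\gamma}=\bar a_\gamma$. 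The invariance $\psi\circ\sigma=\psi$ forces $a_{(m,-n/b)}=(-1)^m a_{(m,n/b)}$. By Parseval,
\[
\W=\tfrac12\,b\sum_\gamma(4\pi^2|\gamma|^2)^2|a_\gamma|^2,\qquad
2b=\int(|\psi_x|^2+|\psi_y|^2)=b\sum_\gamma 4\pi^2|\gamma|^2|a_\gamma|^2 .
\]
The second identity is the area constraint coming from isometry.

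The naive estimate $\W\ge 4\pi^2\lambda_1\cdot(\text{area})$ with $\lambda_1=\min|\gamma|^2$ depends on $b$, so we must use the full isometry conditions $|\psi_x|^2=|\psi_y|^2=1$ and $\langle\psi_x,\psi_y\rangle=0$ pointwise. We write these as Fourier identities. Integrating them gives
\[
\sum_\gamma m^2|a_\gamma|^2=\frac{1}{4\pi^2},\qquad \sum_\gamma \frac{n^2}{b^2}|a_\gamma|^2=\frac{1}{4\pi^2},\qquad \sum_\gamma \frac{mn}{b}|a_\gamma|^2=0,
\]
where the last one also follows from the symmetry. So the $x$-energy and the $y$-energy are separately equal to $1$. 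This yields
\[
\W\ge \tfrac{b}{2}\cdot 16\pi^4\sum\bigl(m^4+n^4/b^4+2m^2n^2/b^2\bigr)|a_\gamma|^2 .
\]
Cauchy--Schwarz then gives $\sum m^4|a|^2\ge \bigl(\sum m^2|a|^2\bigr)^2/\sum|a|^2$. Alternatively, $m^4\ge m^2$ holds for $m\neq0$, and a separate argument rules out the $x$-energy being carried by $m=0$ modes, since those satisfy $\psi_x$-coefficient zero. Thus the $x$-part contributes at least $\tfrac{b}{2}\cdot 4\pi^2\cdot 1$ and the $y$-part at least $\tfrac{b}{2}\cdot 4\pi^2\lambda_y$, where $\lambda_y$ is the smallest admissible $4\pi^2n^2/b^2$.

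The key use of the Klein symmetry is the following. Modes with $m=0$ must be even in $y$ (real cosines), and odd-$m$ modes pair $n$ with $-n$ with a sign. The invariance of the \emph{pointwise} conditions $|\psi_x|^2=1$, rather than just their averages, then shows that the lowest frequencies cannot carry all of the energy. The next step is to examine the nonconstant Fourier coefficients of $|\psi_x|^2-|\psi_y|^2$ and $\langle\psi_x,\psi_y\rangle$ at frequencies $2\gamma_0$ and $\gamma_0\pm\gamma_1$, which must vanish. Concentrating on a few low modes, say $(\pm1,0)$ and $(0,\pm1/b)$, is incompatible with these vanishing conditions combined with the $\sigma$-parity, exactly as for a torus that cannot close up as a product of circles under the twisted identification. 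Quantitatively, we split the energy into a finite set $F$ of low modes and a remainder $R$, in the same spirit as Theorem~\ref{thm:improved-lower-bound}. We then solve a small finite-dimensional optimization: minimize $\W$ subject to the quadratic constraints. This yields a bound $\W\ge c(b)$ as an explicit function of $b$.

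The final step is to minimize $c(b)$ over $b>0$. For large $b$, the $x$-direction is the short one and Chen-type bounds dominate. For small $b$, the $y$-modes are expensive, so $c(b)\to\infty$. The minimum is then checked numerically and should give $2.136\pi^2$. The comparison $2.136\pi^2\approx6.710\pi>6\pi\,\mathrm E(2\sqrt2/3)\approx 6.682\pi$ is a numerical evaluation of the elliptic integral, where $6\pi\mathrm E(2\sqrt2/3)$ is the Willmore energy of $\tilde\tau_{3,1}$. I expect the main obstacle to be the remainder estimate: showing that the residual constraint forces a definite amount of energy into higher modes uniformly in $b$, which requires a careful choice of the finite set $F$ and a sharp handling of the cross-terms in the pointwise isometry equations.
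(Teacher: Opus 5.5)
\textbf{There is a genuine gap: the decisive step is never carried out.} You defer everything quantitative to an unexecuted ``finite-dimensional optimization'' that ``should give $2.136\pi^2$''. That constant is read off from the statement, not derived.

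The steps you do make explicit are too weak, and some are wrong:
\begin{enumerate}
\item \emph{Normalization.} Since $\Delta\psi=2H$ and $\KB$ is half of $\TB$, one has $\mathcal W(\psi_K)=\frac18\int_{\TB}|\Delta\psi|^2\,\mathrm dM_0$, not $\frac12\int_{\TB}|\Delta\psi|^2\,\mathrm dM_0$.
\item \emph{Admissible modes.} $(\pm1,0)$ is not an admissible mode: the deck relation $C_{m,-n}=(-1)^mC_{mn}$ kills $C_{m0}$ for odd $m$. This parity structure (odd $m$ forces $|n|\ge1$; even $m\neq0$ forces $m^2\ge4$) is what must be exploited.
\item \emph{No mass normalization.} Cauchy--Schwarz against $\sum|a_\gamma|^2$ is useless, because $\psi$ maps into $\rn$, not a sphere. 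For the same reason the remainder argument of Theorem~\ref{thm:improved-lower-bound}, which rests on $|\psi|\equiv1$, does not transfer.
\item \emph{The Li--Yau step is far too weak.} Using $m^4\ge m^2$ and $n^4\ge n^2$ gives only $\mathcal W\ge\frac{\pi^2}{2}(b+\frac1b)\ge\pi^2$. Even the parity-refined estimate (Proposition~\ref{theorem2.2.1}) gives only $\frac{\pi^2}{2}(4b+\frac1b)$ for $b\le\sqrt{2/3}$ and $\frac{\pi^2}{2}(b+\frac3b)$ for $b>\sqrt{2/3}$. Their minima are $2\pi^2$ at $b=\frac12$ and $\sqrt3\,\pi^2$ at $b=\sqrt3$, both below $6\pi\mathrm E(\frac{2\sqrt2}{3})\approx2.127\pi^2$. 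Chen's bound $2\pi^2$ does not rescue the large-$b$ regime either.
\end{enumerate}
So you need an explicit, $b$-dependent correction of at least about $0.127\pi^2$ near $b=\frac12$, and a large one near $b=\sqrt3$. The paper's correction at $b=\frac12$ is about $0.145\pi^2$, so the margin is thin, and nothing in your sketch produces such a term.

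\textbf{The missing idea is to impose the pointwise isometry condition on the two fixed circles $y=0$ and $y=\frac b2$ of $\tau_b$.}
\begin{enumerate}
\item On these circles $\partial_y\cos(2\pi qy/b)=0$, so $\psi_y(x,0)$ and $\psi_y(x,\frac b2)$ involve only the modes $(r,s)\in S_2$. The sign $(-1)^s$ at $y=\frac b2$ separates even and odd $s$.
\item Imposing $|\psi_y|\equiv1$ on both circles gives $\frac{16\pi^2}{b^3}\sum_r\bigl(|A_r^e|^2+|A_r^o|^2+|B_r^e|^2+|B_r^o|^2\bigr)=2$, where $A_r^e=\sum_{s\ \mathrm{even}}sA_{rs}$, and similarly for the others.
\item Apply Cauchy--Schwarz in $s$ with weights $s^2-\mathcal B^2$, where $\mathcal B=\sqrt{1-2b^2+3b^4}$, against the excess $\widetilde d_{rs}\ge s^2(s^2-\mathcal B^2)/b^4$ over the Li--Yau weight. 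Mittag-Leffler summation then gives $\mathcal W>J_2(b)=\frac{\pi^2}{2}(4b+\frac1b)+\frac{\pi\mathcal B}{b}\cot\frac{\pi\mathcal B}{2}$ for $b<\sqrt{2/3}$.
\item The same argument with $\psi_x$ on these circles, where the $S_2$ modes vanish, gives $J_1(b)$ with a $\tanh$ correction for $b>\sqrt{2/3}$.
\item Calculus then shows that $J_1$ is increasing with infimum $\frac{11}{12}\sqrt6\,\pi^2$. It also shows that $J_2$ has a unique minimum at $b_*\approx0.513$, with $J_2(b_*)>\frac{\pi^2}{2}\sqrt{5+4\sqrt{11}}>2.136\pi^2$.
\end{enumerate}

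Your alternative, using vanishing Fourier coefficients of $|\psi_x|^2-|\psi_y|^2$ and $\langle\psi_x,\psi_y\rangle$, might conceivably be made to work. As written, however, it is a heuristic. Showing that it beats $2.127\pi^2$ uniformly in $b$ is precisely the content of the theorem.
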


We next construct a family of flat Klein bottles into various Euclidean spaces, with Willmore energy strictly less than $8\pi$.

Finally, by combining the Fourier expansion with the isometric condition and the minimal surface equation, we prove the nonexistence of flat minimal isometric Klein bottles in spheres.\\

\par {\bf {Plan of the paper.}}  We first recall the notions of conformal area and flat Klein bottles in Section \ref{section2}. In Section \ref{section4}, we derive lower bounds for the conformal area of Klein bottles and, consequently, for the Willmore energy of conformal immersions in the corresponding conformal classes. In Section \ref{section5}, we study the Willmore energy of isometrically flat Klein bottles. In Section \ref{section6}, we construct explicit isometric immersions of Klein bottles into Euclidean space with Willmore energy below $8\pi$ for a range of conformal classes. Finally, in Section \ref{section8}, we prove a nonexistence result for minimal isometric immersions of flat Klein bottles into spheres.

 \section{Conformal area and Flat Klein bottles }\label{section2}
 In this section, we will first recall the notion of conformal area of a surface \cite{L-Y}. Then we will collect some basic properties of Klein bottles. 
 
 \subsection{Conformal area of a closed surface}

\begin{definition}
     For a closed surface $M$ endowed with a fixed conformal structure, the conformal $N$-area $A_c(M,N)$ is defined as
 \begin{equation}
     A_c(M,N):= \underset{\psi}{\inf} \sup_{\mathcal{T}\in
    G} Area(\mathcal{T}\circ \psi),
 \end{equation}
 where $\psi$ runs over all branched conformal immersions of $M$ into  the $n$-dimensional unit sphere $\sn$. Here $G$ is the conformal group of $\sn$. The conformal area $A_c(M)$ is defined as
$A_c(M)=\inf_NA_c(M,N).$
 \end{definition}
 Let $\B^{N+1}\subset\mathbb{R}^{N+1}$ be the open unit ball. It is well known that there exists an embedding of $\B^{N+1}$ into $G$ (see e.g. \cite{L-Y}, \cite{M-N2} or \cite{M-R}) mapping $g\in\B^{N+1}$ into $\TT_g\in G$, with 
 $$\TT_g(x)=\frac{
x+
\left(
\left(
(1-|g|^2)^{-\frac12}-1
\right)
|g|^{-2}\langle x,g\rangle
+
(1-|g|^2)^{-\frac12}
\right)g
}{
(1-|g|^2)^{-\frac12}
\left(
\langle x,g\rangle+1
\right)
}, ~\hbox{ with }~ \TT_0=\mathrm{id} .$$
 
 Let $\dd s^2$ be a conformal metric on $M$ and we denote by $\dd M$ the associated measure. We have
 \begin{equation}
     A(\Tg\circ \psi):=Area(\Tg\circ \psi)=\frac{1}{2}\int_{M} \frac{1-|g|^2}{(\langle\psi,g\rangle+1)^2}|\nabla\psi|^2\, \dd M.
 \end{equation}
Let $\psi:M\to \sn\subset\mathbb R^{N+1}$ be an immersion. Denote by $\bar H$ and $H$ its mean curvature vectors in $\sn$ and $\mathbb R^{N+1}$, respectively. For immersions into the unit sphere, we get
\[
\W(\psi)
:=\int_M|H|^2\,\mathrm dM=\int_M\bigl(1+|\bar H|^2\bigr)\,\mathrm dM
=A(\psi)+\int_M|\bar H|^2\,\mathrm dM.
\]
Thus, this is precisely the Willmore energy of $\psi$ as an immersion into $\mathbb R^{N+1}$.
 
\begin{lemma}\label{lemma1.2}\cite{L-Y}
    Let $\psi:M\rightarrow\sn$ be a branched conformal immersion. Then there exists an element $g\in \B^{n+1}$ such that
\begin{equation}\label{eq1.3}
    \int_M \Tg\circ \psi \, \dd M=0.
\end{equation}
\end{lemma}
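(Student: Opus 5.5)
The plan is the standard Hersch/Li--Yau degree argument via Brouwer's fixed point theorem. Define the continuous map $F:\B^{N+1}\to\R^{N+1}$ by
\[
F(g)=\frac{1}{\mathrm{Area}(M,\dd M)}\int_M \Tg\circ\psi\,\dd M .
\]
Since $\Tg$ maps $\sn$ to $\sn$, $F$ takes values in the closed unit ball $\overline{\B}^{N+1}$. Moreover, $\Tg\circ\psi$ is a probability-average of points on the sphere, so $|F(g)|<1$ unless $\Tg\circ\psi$ is constant almost everywhere. That cannot happen, because $\psi$ is a nonconstant branched immersion and $\Tg$ is a diffeomorphism. The goal is to show that $F$ has a zero in the open ball.

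\textbf{Step 1: extend $F$ continuously to $\overline{\B}^{N+1}$.} Write $g=t\xi$ with $\xi\in\sn$ and $t\to1$. From the explicit formula for $\Tg$, one checks that $\mathcal{T}_{t\xi}(x)\to\xi$ for every $x\neq-\xi$: the conformal map pushes everything toward the point $\xi$. The set $\psi^{-1}(-\xi)$ is a finite set of points, hence of measure zero, since a branched immersion of a closed surface meets a point in finitely many preimages. Dominated convergence (the integrand is bounded by $1$) then gives $F(t\xi)\to\xi$. Uniformity in $\xi$, which is needed for joint continuity up to the boundary, follows from the same estimate. Away from small geodesic balls around $-\xi$, convergence is uniform, and the measure of $\psi^{-1}$ of such a ball tends to zero uniformly in $\xi$ by compactness. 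So $F$ extends continuously to $\overline{\B}^{N+1}$ with $F|_{\sn}=\mathrm{id}$.

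\textbf{Step 2: topological conclusion.} Suppose $F$ has no zero in $\overline{\B}^{N+1}$. On the boundary $F(\xi)=\xi\neq0$, so this amounts to no zero in the interior. Then $G(g)=-F(g)/|F(g)|$ maps $\overline{\B}^{N+1}$ to $\sn\subset\overline{\B}^{N+1}$, and by Brouwer it has a fixed point $g_0$. Since $|G(g_0)|=1$, the fixed point lies in $\sn$, where $G(g_0)=-g_0\neq g_0$, a contradiction. Equivalently, the identity on $\sn$ would be null-homotopic through $F/|F|$. Hence there is $g\in\B^{N+1}$ with $\int_M\Tg\circ\psi\,\dd M=0$.

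\textbf{Main obstacle.} The only delicate point is the uniform boundary convergence in Step 1. The route is a uniform bound $|\mathcal{T}_{t\xi}(x)-\xi|\le C\sqrt{1-t}/|x+\xi|$, read off from the formula. Combined with this, one needs that $\dd M(\psi^{-1}(B_r(-\xi)))\to0$ as $r\to0$ uniformly in $\xi$. That follows because $\psi$ is a branched immersion, so $\psi$ has locally finite multiplicity with bounded branching order, and its pullback measure has no atoms.
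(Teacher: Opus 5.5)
Your proposal is correct and is essentially the argument the paper relies on. The paper gives no proof of its own and cites Li--Yau, whose proof is this same Hersch-type argument: extend $g\mapsto \frac{1}{\mathrm{Area}}\int_M \mathcal{T}_g\circ\psi\,\dd M$ continuously to the closed ball with boundary values equal to the identity (using that $\psi_*\dd M$ has no atoms), then conclude by Brouwer/degree theory. Your estimate $|\mathcal{T}_{t\xi}(x)-\xi|\le C\sqrt{1-t}/|x+\xi|$ and the compactness argument for uniform smallness of $\dd M(\psi^{-1}(B_r(-\xi)))$ supply the boundary continuity that Li--Yau only sketch.
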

\begin{lemma}\cite{L-Y}
Let $\psi:M\rightarrow\rn$ be a  branched conformal immersion with mean curvature vector $H$. Then
    \begin{equation}\label{eq1.4}
         \int_M|H|^2\, \dd M\geq A_c(M).
    \end{equation}
\end{lemma}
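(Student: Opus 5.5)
The statement is the second lemma of Li--Yau: for a branched conformal immersion $\psi:M\to\rn$ with mean curvature vector $H$, one has $\int_M|H|^2\,\dd M\ge A_c(M)$. My plan is to pass from $\rn$ to a sphere and use the conformal invariance of the Willmore energy. First compose $\psi$ with inverse stereographic projection $\sigma:\rn\to\mathbb S^N\subset\R^{N+1}$. Since $\sigma$ is conformal, $\sigma\circ\psi$ is again a branched conformal immersion of $M$ with the same conformal structure. The quantity $\int_M|H|^2\,\dd M$, in the form $\int(|H|^2-K)\,\dd M$ with the Gauss-term corrected, is pointwise invariant under conformal changes of the ambient space. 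Since $\int K\,\dd M$ is topological, it follows that $\W(\psi)=\W(\sigma\circ\psi)$, where the right-hand side is computed in $\R^{N+1}$. For branched immersions I would check this on the complement of the finitely many branch points and argue that nothing is lost there, since the integrands are integrable near branch points. More cleanly, I would use that $|\mathring{A}|^2\,\dd M$ is a pointwise conformal invariant, together with the Gauss--Bonnet formula including branch-point contributions. These contributions are the same on both sides, because the branching orders are unchanged by $\sigma$.

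Next apply an arbitrary $\TT\in G$. Each $\TT\circ\sigma\circ\psi$ is a branched conformal immersion into $\sn$, and by the same invariance it has $\W(\TT\circ\sigma\circ\psi)=\W(\psi)$. Using the identity recorded above, $\W=A+\int|\bar H|^2\,\dd M$, we get
\[
\W(\psi)=A(\TT\circ\sigma\circ\psi)+\int_M|\bar H_{\TT}|^2\,\dd M\ \ge\ A(\TT\circ\sigma\circ\psi).
\]
Taking the supremum over $\TT\in G$ gives $\W(\psi)\ge\sup_{\TT}A(\TT\circ\sigma\circ\psi)\ge A_c(M,N)\ge A_c(M)$, since $\sigma\circ\psi$ is an admissible competitor in the infimum defining $A_c(M,N)$.

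The main obstacle is the rigorous conformal invariance of $\W$ in the presence of branch points; everything else is formal. At a branch point the local form is $z\mapsto z^{k+1}$, the induced metric is $|z|^{2k}e^{2u}|dz|^2$, and the curvature terms remain integrable. I would excise small discs around the branch points and note that the boundary contributions in Gauss--Bonnet match for $\psi$ and $\TT\circ\sigma\circ\psi$: they depend only on $k$, since $\sigma$ and $\TT$ are smooth conformal diffeomorphisms near the image point. I would then let the radius tend to zero. Alternatively, if $\W(\psi)=\infty$ there is nothing to prove, so one may assume finiteness throughout.
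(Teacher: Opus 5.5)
Your argument is correct and is exactly the Li--Yau proof that the paper cites without reproducing. You transfer $\psi$ to $\sn$ by inverse stereographic projection, use M\"obius invariance of $\mathcal W$, and use $\mathcal W(\TT\circ\sigma\circ\psi)=A(\TT\circ\sigma\circ\psi)+\int_M|\bar H|^2\,\dd M\ge A(\TT\circ\sigma\circ\psi)$ for every $\TT\in G$, before taking the supremum over $G$ and then the infimum giving $A_c(M,N)\ge A_c(M)$.

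At the branch points you can skip the excision and cone-angle bookkeeping. The induced metrics of $\psi$ and $\TT\circ\sigma\circ\psi$ differ by $e^{2v}$ with $v=\log(\rho\circ\psi)$ smooth on all of $M$ ($\rho$ the conformal factor of $\TT\circ\sigma$). So the two Willmore integrands differ by the exact form $-\dd{*}\dd v$, which integrates to zero over the closed surface $M$.
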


\subsection{Flat Klein bottles and Fourier expansion}



 Consider the rectangular flat torus
\[\TB=\mathbb{C}/\Lambda,
\quad \hbox{ with }
\Lambda=\mathbb{Z}(1,0)\oplus \mathbb{Z}(0,b), \quad  b\in \R^+,\]
equipped with the flat metric
$\dd s_0^2=\dd x^2+\dd y^2=|\dd z|^2$, where $z=x+\mathrm{i}y$. It has area $Area(\TB)=b.$
The deck transformation  on $\TB$  is definedas (see for example \cite[Proposition 4.2.10]{BCGG})
\begin{equation}\label{eq-tau}
    \tau_b:z\mapsto \bar{z}+\frac{1}{2}+\Lambda.
\end{equation}
The quotient
$\KB:=\TB/\langle \tau_b\rangle$
gives a flat Klein bottle. We denote by
$\widetilde\pi:\TB\to \KB$
the covering map. Since $\tau_b$ is an isometric involution, the metric $\dd s_0^2$ descends to a flat metric on $\KB$, which by abuse of notation will still be denoted by $\dd s_0^2$. Then $\KB$ has area
\begin{equation}\label{eq-kb}
Area(\KB)=\frac{1}{2} Area(\TB)=\frac{b}{2}.
\end{equation}

 \begin{lemma}
     The moduli space of conformal Klein bottles can be identified with
$$
\mathcal M_{\mathrm{Klein}}\cong \mathbb R_+,
$$
where the point $b\in\mathbb R_+$ corresponds to the conformal class of the flat Klein bottle $\KB$.
 \end{lemma}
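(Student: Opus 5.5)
The plan is to reduce the statement to the classical fact that every compact nonorientable surface of Euler characteristic zero carries a flat metric, and that every conformal structure on the Klein bottle is realized by exactly one rectangular one.

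\textbf{Step 1: pass to the oriented double cover.}
Fix a conformal structure on the Klein bottle $K$.
\begin{itemize}
\item Its oriented double cover is a torus $T$ with an induced conformal structure.
\item On $T$ we have an anticonformal fixed-point-free involution $\sigma$ with $K=T/\langle\sigma\rangle$.
\item By uniformization, $T\cong\mathbb C/\Lambda$ for some lattice $\Lambda$, and we may take the flat metric $|\dd z|^2$.
\item The map $\sigma$ lifts to an antiholomorphic affine map $z\mapsto \alpha\bar z+\beta$ of $\mathbb C$. Indeed, anticonformal automorphisms of a flat torus are isometries up to scale, and the scale is $1$ because the area is preserved.
\end{itemize}

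\textbf{Step 2: normalize $\sigma$ to the form \eqref{eq-tau}.}
The requirement $\sigma^2=\mathrm{id}$ gives $|\alpha|=1$ and $\alpha\bar\beta+\beta\in\Lambda$. After rotating coordinates we can take $\alpha=1$, so $\sigma(z)=\bar z+\beta$.
\begin{itemize}
\item Then $\bar\Lambda=\Lambda$, since $\sigma$ must preserve $\Lambda$-cosets.
\item A lattice invariant under conjugation is either rectangular or rhombic.
\item We need $2\,\mathrm{Re}\beta\in\Lambda$, and we need $\bar z+\beta-z\notin\Lambda$ for all $z$, i.e.\ no fixed points.
\end{itemize}
I expect this case analysis to be the main obstacle. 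In the rhombic case the fixed-point-free condition fails, or else a change of fundamental domain shows that the quotient is actually a rectangular case (possibly with a different $b$). In the rectangular case $\Lambda=\mathbb Z a\oplus\mathbb Z\,\mathrm i c$, fixed-point-freeness forces $\mathrm{Re}\beta\equiv a/2$. Translating $z$ by a purely imaginary constant lets us set $\mathrm{Im}\beta=0$, and scaling gives $a=1$, $c=b$. So $\sigma=\tau_b$ and the conformal class is that of $\KB$.

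\textbf{Step 3: injectivity.}
Suppose $\mathbb K^2_{b}$ and $\mathbb K^2_{b'}$ are conformally equivalent. The equivalence lifts to a conformal equivalence of the oriented double covers intertwining $\tau_b$ and $\tau_{b'}$, hence to an affine map $z\mapsto \lambda z+\mu$ or $\lambda\bar z+\mu$.
\begin{itemize}
\item Intertwining $\bar z+\tfrac12$ forces $\lambda$ to be real, up to a sign or a factor of $\mathrm i$.
\item If $\lambda$ is real, it maps $\Lambda_b$ to $\Lambda_{b'}$ and preserves the distinguished horizontal generator $1$, which is twice the translation part of $\tau_b^2$. This gives $\lambda=\pm1$ and $b=b'$.
\item The case $\lambda\in\mathrm i\mathbb R$ would swap the horizontal and vertical directions. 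It is excluded because $\tau_b$ fixes the horizontal direction (it reverses $y$, not $x$).
\end{itemize}
Hence $b\mapsto[\KB]$ is a bijection onto the moduli space, which gives $\mathcal M_{\mathrm{Klein}}\cong\mathbb R_+$.
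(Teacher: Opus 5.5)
The paper gives no proof of this lemma. It is quoted as a classical fact, with \cite{BCGG} cited only for the normal form \eqref{eq-tau} of the deck transformation. Your argument is the standard proof: orientation double cover, uniformization, normal form of a fixed-point-free antiholomorphic involution, then rigidity of that normal form. It is sound in outline, but the step you yourself flag is left as a hedge rather than an argument.

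\textbf{The rhombic case.} Once the reflection axis is fixed as the real axis, ``rectangular'' and ``rhombic'' are mutually exclusive. So no change of fundamental domain can turn one into the other. The rhombic case simply never occurs, and the second alternative in your hedge should be dropped. Indeed, let $\Lambda=\mathbb Z w\oplus\mathbb Z\bar w$ with $w=u+\mathrm{i}v$, $u,v>0$.
\begin{enumerate}
\item We have $\Lambda\cap\mathbb R=2u\mathbb Z$, while the real parts of lattice points fill out $u\mathbb Z$.
\item The involution condition $2\operatorname{Re}\beta\in\Lambda\cap\mathbb R$ gives $\operatorname{Re}\beta=ku=\operatorname{Re}(kw)$ for some $k\in\mathbb Z$.
\item Any $z$ with $2\operatorname{Im}z=\operatorname{Im}(\beta-kw)$ then satisfies $\sigma(z)=z+kw$, so it is a fixed point of $\sigma$ on $T$.
\end{enumerate}
Hence only the rectangular case survives, and your normalization there to $\sigma=\tau_b$ completes surjectivity.

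\textbf{Step 3.} The intertwining relation does more than you claim. Write the lift as $\tilde f(z)=\lambda z+\mu$. Then $\tilde f\circ\tau_b\equiv\tau_{b'}\circ\tilde f$ reads
\[
\lambda\bar z+\tfrac{\lambda}{2}+\mu\equiv\bar\lambda\bar z+\bar\mu+\tfrac12 \pmod{\Lambda_{b'}}\quad\text{for all } z,
\]
so $\lambda=\bar\lambda$ outright. The antiholomorphic lift $\lambda\bar z+\mu$ gives the same conclusion. There is therefore no separate case $\lambda\in\mathrm{i}\mathbb R$ to exclude; your ``horizontal direction'' heuristic is exactly this computation. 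With $\lambda$ real, $\lambda(\Lambda_b\cap\mathbb R)=\Lambda_{b'}\cap\mathbb R$ forces $\lambda=\pm1$, and then $b\mathbb Z=b'\mathbb Z$ gives $b=b'$. Here $\Lambda_b=\mathbb Z\oplus\mathbb Z\,\mathrm{i}b$.

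Finally, a small slip. Since $\tau_b^2(z)=z+1$, the number $1$ is the translation part of $\tau_b^2$, i.e.\ twice that of $\tau_b$, not twice that of $\tau_b^2$.
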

Throughout the paper, {\em $\KB$ will always denote the flat Klein bottle  with the flat metric $\dd s^2_0$.}  \\

Let $\psi_K:\KB\to \rn$ be a smooth map. Then the map
$\psi:=\psi_K\circ \widetilde\pi:\TB\to \rn$ is a lift on $\TB$ satisfying $\psi\circ \tau_b=\psi.$
Conversely, every smooth map $\psi:\TB\to \rn$ satisfying
$\psi\circ \tau_b=\psi$ descends uniquely to a smooth map $\psi_K:\KB\to \rn$.
 This observation will be used throughout the paper. Geometric quantities associated with $\psi_K$ can be computed on the covering torus in terms of $\psi$, subject to the symmetry induced by the involution $\tau_b$. In particular, if $\psi_K$ is an immersion, a conformal immersion, or an isometric immersion, then so is $\psi$ with respect to the lifted metric. Moreover, integrals over $\KB$ are equal to a half of the corresponding integrals over $\TB$.

Now consider the Fourier expansion adapted to the symmetry $\tau_b$. Since $\TB$ is flat, every $L^2$-function on $\TB$ admits a Fourier expansion with respect to $\Lambda$. 

Set
\begin{equation}\label{eq-S1S2}
\left\{\begin{split}
S_1&=\{(p,q)\in \mathbb{Z}\times \mathbb{Z}\mid  p\in2\mathbb{Z}^+\cup\{0\},\ q\in \mathbb{Z}^+\cup\{0\}\},\\
S_2&=\{(r,s)\in \mathbb{Z}\times \mathbb{Z}\mid  r\in2\mathbb{Z}^+-1,\ s\in \mathbb{Z}^+\}.
\end{split}\right.
\end{equation}
 Note that $S_1\cap S_2=\emptyset$. We define the functions
\begin{equation}\label{eq:eigenfunctions}
\left\{
\begin{aligned}
f_{pq}(x,y)&=\cos\left(2\pi px\right)\cos\left(\frac{2\pi qy}{b}\right), \\
g_{pq}(x,y)&=\sin\left(2\pi px\right)\cos\left(\frac{2\pi qy}{b}\right), \\
h_{rs}(x,y)&=\cos\left(2\pi rx\right)\sin\left(\frac{2\pi sy}{b}\right),\\
l_{rs}(x,y)&=\sin\left(2\pi rx\right)\sin\left(\frac{2\pi sy}{b}\right),
\end{aligned}
\right.
\end{equation}
on $\TB$,
where $(p,q)\in S_1$ and $(r,s)\in S_2$. Each of the nonzero functions in \eqref{eq:eigenfunctions} is an eigenfunction of the Laplace operator $\Delta_0=\partial_x^2+\partial_y^2$ on $(\TB,\dd s_0^2)$, with eigenvalue
\begin{equation}\label{eq-eigenvalue}
\lambda_{m,n}=4\pi^2(m^2+ 
\frac{n^2}{b^2}),~~ (m,n)=\hbox{$(p,q)\in S_1$ or $(r,s)\in S_2$}.
\end{equation}
Moreover, these functions are precisely adapted to the symmetry condition induced by $\tau_b$.  
 \begin{lemma}\label{lemma1.1}(see also \cite{B-H-K})
The nonzero functions 
\[\{f_{pq}\circ \tilde{\pi}^{-1}, g_{pq}\circ \tilde{\pi}^{-1}, h_{rs}\circ \tilde{\pi}^{-1}, l_{rs}\circ \tilde{\pi}^{-1}|(p,q)\in S_1, (r,s)\in S_2\}\] form a complete family of real eigenfunctions of the flat Klein bottle $\KB$.
\end{lemma}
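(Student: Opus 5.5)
We must prove Lemma \ref{lemma1.1}: the listed nonzero functions, pushed down to $\KB$, form a complete family of real eigenfunctions of $\KB$. The plan is to identify $L^2(\KB)$ with the $\tau_b$-invariant subspace of $L^2(\TB)$ and then decompose the standard Fourier basis of $\TB$ according to how $\tau_b$ acts on it.

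First we record that functions on $\KB$ correspond exactly to functions $u$ on $\TB$ with $u\circ\tau_b=u$. The projection $u\mapsto \frac12(u+u\circ\tau_b)$ is the $L^2$-orthogonal projection onto this subspace, because $\tau_b$ is an isometric involution. It commutes with $\Delta_0$, since $\tau_b$ is an isometry. Hence eigenfunctions of $\KB$ are precisely the $\tau_b$-invariant eigenfunctions of $\TB$, with the same eigenvalues. Also, $\int_{\KB}=\frac12\int_{\TB}$, so orthogonality transfers.

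Next we compute the action of $\tau_b(x,y)=(x+\tfrac12,-y)$ on the real products of trigonometric functions. Shifting $x$ by $\tfrac12$ multiplies $\cos(2\pi mx)$ and $\sin(2\pi mx)$ by $(-1)^m$. Flipping $y$ fixes $\cos(2\pi ny/b)$ and negates $\sin(2\pi ny/b)$. So a product is invariant exactly in two situations:
\begin{itemize}
\item $m$ is even and the $y$-factor is a cosine; these are $f_{pq}$ and $g_{pq}$ with $(p,q)\in S_1$;
\item $m$ is odd and the $y$-factor is a sine; these are $h_{rs}$ and $l_{rs}$ with $(r,s)\in S_2$.
\end{itemize}
In every other case the product is anti-invariant. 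Indices of the wrong sign are redundant, since they give $\pm$ the same function or $0$. Therefore the restrictions $p,q\ge0$ and $r,s>0$ lose nothing, and $g_{0q}=0$ is excluded by the word ``nonzero''.

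For completeness, start from the fact that the real functions $\cos/\sin(2\pi mx)\cdot\cos/\sin(2\pi ny/b)$, with $m,n\ge0$, span a dense subspace of $L^2(\TB)$. Each of them is either invariant or anti-invariant, and the projection kills the anti-invariant ones. So the images of this dense family under the projection are exactly the invariant functions listed. Because the projection is continuous and onto, these images span a dense subspace of $L^2(\KB)$.

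Finally, the listed functions are mutually orthogonal, since they are distinct elements of an orthogonal family on $\TB$. Each one is an eigenfunction with eigenvalue $\lambda_{m,n}$ by direct differentiation, and by the first step it remains an eigenfunction on $\KB$. Together these give completeness. The only step needing care is the bookkeeping in the sign analysis, in particular that the case $m=0$ produces only $f_{0q}$.
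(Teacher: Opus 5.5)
Your argument is correct. The paper gives no proof of this lemma; it states it with a pointer to Bérard--Helffer--Kiwan \cite{B-H-K}. Your route is a complete, self-contained verification of orthogonality, the eigenvalue equation, and density of the span. It has three ingredients:
\begin{itemize}
\item you identify $L^2(\KB)$ with the $\tau_b$-invariant subspace of $L^2(\TB)$;
\item you use the orthogonal projection $u\mapsto\frac12(u+u\circ\tau_b)$, which commutes with $\Delta_0$;
\item you note that $\tau_b(x,y)=(x+\frac12,-y)$ multiplies each real trigonometric product by $(-1)^m$ when the $y$-factor is a cosine and by $(-1)^{m+1}$ when it is a sine.
\end{itemize}
The citation buys brevity. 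Your argument makes explicit the sign rule that the paper later states directly as the deck relation $C_{m,-n}=(-1)^mC_{mn}$ in \eqref{eq:deck-relation}; that relation is just the complex-exponential form of your invariance computation.

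You may also want the stronger reading of ``complete'': every eigenfunction of $\KB$ is a finite combination of listed functions with the same eigenvalue. One extra line gives this. By self-adjointness, an eigenfunction with eigenvalue $\lambda$ is orthogonal to all listed functions with other eigenvalues. So its expansion in your orthogonal basis involves only the finitely many listed functions with eigenvalue $\lambda$.
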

For later calculations, it is convenient to normalize these eigenfunctions in $L^2(\TB)$. We denote by
$\bar f_{pq}$, $\bar g_{pq}$, $\bar h_{rs}$, $ \bar l_{rs}$
the corresponding normalized functions, chosen so that
\begin{equation}
    \label{eq-unit}
\int_{\TB} |\bar f_{pq}|^2\,\dd M_0
=
\int_{\TB} |\bar g_{pq}|^2\,\dd M_0
=
\int_{\TB} |\bar h_{rs}|^2\,\dd M_0
=
\int_{\TB} |\bar l_{rs}|^2\,\dd M_0
=1,\end{equation}
where $\dd M_0=\dd x\wedge\dd y$ denotes the area form of the flat metric $\dd s_0^2=\dd x^2+\dd y^2$. 
Although  $\bar g_{0q}$, $q>0$, does really show up, here we include them for convenience. 
Accordingly, any smooth map $\psi_K:\KB\to \rn$, identified with its $\tau_b$-invariant lift $\psi$ to $\TB$,  admits an expansion of the form
\begin{equation}\label{eq:fourier-expansion-map}
\psi
=
\sum_{(p,q)\in S_1}\bigl(A_{pq}\bar f_{pq}+B_{pq}\bar g_{pq}\bigr)
+
\sum_{(r,s)\in S_2}\bigl(A_{rs}\bar h_{rs}+B_{rs}\bar l_{rs}\bigr),
\end{equation}
for suitable vectors
$A_{pq},B_{pq},A_{rs},B_{rs}\in \rn.$ We also set 
\begin{equation}\label{eq-apq}
a_{pq}=|A_{pq}|^2+|B_{pq}|^2,\qquad (p,q)\in S_1,\qquad
a_{rs}=|A_{rs}|^2+|B_{rs}|^2,\qquad (r,s)\in S_2.
\end{equation}
We adopt the convention $B_{0q}=0$ for all $q\ge0$. Consequently, the definition in (\ref{eq-apq}) gives $a_{0q}=|A_{0q}|^2$. Note that the family
 $\{\bar f_{pq},\bar g_{pq},\bar h_{rs},\bar l_{rs}\}$
is orthogonal in $L^2(\TB)$, and the same remains true after applying $\partial_x$, $\partial_y$, or $\Delta$. \vspace{2mm}

In some case, it is also convenient to use the complex Fourier expansion. Let $\psi:(\T_b^2,\mathrm ds_0^2)\to\rn$ be a conformal immersion. Its complex Fourier expansion is
\begin{equation}
\psi(x,y)=\sum_{(m,n)\in\mathbb Z^2}C_{mn}e^{2\pi i\left(mx+\frac{ny}{b}\right)},
\label{eq:fourier-expansion}
\end{equation}
where $C_{mn}\in\mathbb C^n$. For $z,w\in\mathbb C^{n+1}$, we write $
\langle z,w\rangle_{\mathbb C}
:=z\cdot\overline w=z\overline{w}$. Since $\psi$ takes values in $\rn$,
\[
C_{-m,-n}=\overline{C_{mn}}.
\]
On the other hand, the deck transformation $\tau_b$ of the Klein bottle gives
\begin{equation}
C_{m,-n}=(-1)^mC_{mn}.
\label{eq:deck-relation}
\end{equation}
In particular, $C_{m0}=0$ whenever $m$ is odd. Thus, the lift $\psi$ of $\psi_K:(\KB,\mathrm{d}s_0^2)\to\rn$ admits the Fourier expansion \eqref{eq:fourier-expansion}, with coefficients satisfying \eqref{eq:deck-relation}.

\section{On the Willmore Energy of Klein bottles in $\sn$ }\label{section4}
In this section, we first apply the method of Montiel and Ros to estimate the area of balanced conformal immersions of a Klein bottle into spheres.

\begin{proposition}\label{prop:fourier-lower-bound}
Let $\psi_K:\KB\to \sn$ be a  branched conformal immersion of $(\KB,ds_0^2)$ into $\sn$. Assume that its lift $\psi$ satisfies
$\int_{\T_b^2}\psi\,\mathrm dM_0=0$. Then
\[
A(\psi_K)>\frac{8\pi^2b}{4b^2+1}.
\]
\end{proposition}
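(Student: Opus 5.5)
The plan is to follow Montiel–Ros: combine the pointwise constraint $|\psi|=1$ with the balancing condition and with conformality, all expressed through the real Fourier expansion \eqref{eq:fourier-expansion-map} of the $\tau_b$-invariant lift $\psi$ on $\TB$. Since the lift double covers $\KB$, we have $A(\psi_K)=\frac12 A(\psi)$. It therefore suffices to prove
\[
A(\psi)>\frac{16\pi^2 b}{4b^2+1}.
\]
Because $\psi$ is a branched conformal immersion, it lies in $W^{1,2}$, so Parseval applies to $\psi$, $\psi_x$ and $\psi_y$.

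\textbf{Step 1: the three Fourier identities.} Using the orthonormality \eqref{eq-unit} and the orthogonality of the derivatives, I record three identities, with sums over all modes $(m,n)$ in $S_1\cup S_2$.
\begin{itemize}
\item From $|\psi|\equiv 1$ and $Area(\TB)=b$:
\[
\sum a_{mn}=\int_{\TB}|\psi|^2\,\dd M_0=b .
\]
\item The balancing hypothesis $\int\psi\,\dd M_0=0$ gives $a_{00}=0$.
\item Conformality gives $|\psi_x|^2=|\psi_y|^2$ pointwise, so
\[
A(\psi)=\int|\psi_x|^2=\int|\psi_y|^2,
\]
which in Fourier terms reads
\[
A(\psi)=\sum 4\pi^2m^2a_{mn}=\sum 4\pi^2\frac{n^2}{b^2}a_{mn}.
\]
\end{itemize}

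\textbf{Step 2: an interpolated lower bound.} For any $\alpha\in[0,1]$,
\[
A(\psi)=\sum 4\pi^2\Bigl(\alpha m^2+(1-\alpha)\frac{n^2}{b^2}\Bigr)a_{mn}.
\]
The Klein-bottle symmetry restricts which modes can occur: a mode with $n=0$ must have $m$ even (this is where $\tau_b$ enters). Hence the relevant nonzero modes have
\begin{itemize}
\item $m=0$, $n\ge1$: weight at least $(1-\alpha)/b^2$;
\item $n=0$, $m\ge2$: weight at least $4\alpha$;
\item $m,n\ge1$: weight at least $\alpha+(1-\alpha)/b^2$, which is larger than the first case.
\end{itemize}
I choose $\alpha=1/(4b^2+1)$ to equalize $4\alpha$ and $(1-\alpha)/b^2$. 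Then every mode has weight at least $4/(4b^2+1)$, and
\[
A(\psi)\ge 4\pi^2\cdot\frac{4}{4b^2+1}\cdot b=\frac{16\pi^2b}{4b^2+1}.
\]

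\textbf{Step 3: strictness.} This is the only delicate point. Equality forces $a_{mn}=0$ except on modes attaining the minimal weight. With $\alpha\in(0,1)$ those modes are exactly $(0,1)$ and $(2,0)$; every other mode, including $(1,1)$, $(0,n)$ with $n\ge2$ and $(m,0)$ with $m\ge4$, has strictly larger weight.

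Since $B_{0q}=0$, in the equality case
\[
\psi=A_{01}\bar f_{01}+A_{20}\bar f_{20}+B_{20}\bar g_{20},
\]
so $\psi_y$ is a multiple of $\sin(2\pi y/b)A_{01}$. This vanishes identically on the circle $y=0$. By conformality $\psi_x$ vanishes there as well, so $\dd\psi=0$ along an entire curve. That contradicts $\psi$ being a branched immersion, whose branch points are isolated. Hence the inequality is strict, and dividing by $2$ gives the claimed bound on $A(\psi_K)$.

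I expect Step 2 to be the main obstacle: making sure no admissible mode undercuts the weight $4/(4b^2+1)$, which relies on the parity restriction $C_{m0}=0$ for odd $m$ from \eqref{eq:deck-relation}.
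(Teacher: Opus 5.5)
Your proof is correct and is essentially the paper's argument. Your choice $\alpha=1/(4b^2+1)$ gives each mode the weight $(m^2+4n^2)/(4b^2+1)$, which reproduces the paper's identity $\sum[m^2+4(n^2-1)]|C_{mn}|^2=(4b^2+1)\sum m^2|C_{mn}|^2-4$, only written in the real basis rather than the complex one. Your strictness argument is a clean way to supply the ``simple calculation'' the paper omits: in the equality case $\psi_y$ is proportional to $\sin(2\pi y/b)A_{01}$, so $\dd\psi\equiv0$ along the whole circle $\{y=0\}$ by conformality, which a branched immersion cannot allow.
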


\begin{proof}
Set
\[
\mathbb Z^*:=\mathbb Z^2\setminus\left(\{(m,n):n=0,\ m\in2\mathbb Z+1\}\cup\{(0,0)\}\right).
\]
By \eqref{eq:fourier-expansion} and the assumption $\int_{\T_b^2}\psi\,\mathrm dM_0=0$, we have $C_{00}=0$. Differentiating the Fourier series gives
\[
\begin{split}
\psi_x
&=2\pi \mathrm i\sum_{(m,n)\in\mathbb Z^*}mC_{mn}\mathrm e^{2\pi i\left(mx+\frac{ny}{b}\right)},\quad \psi_y
=\frac{2\pi \mathrm i}{b}\sum_{(m,n)\in\mathbb Z^*}nC_{mn}\mathrm e^{2\pi i\left(mx+\frac{ny}{b}\right)}.
\end{split}
\]
Hence
\[
\begin{split}
\int_{\T_b^2}|\psi|^2\,\mathrm dM_0
&=b\sum_{(m,n)\in\mathbb Z^*}|C_{mn}|^2,\\
\int_{\T_b^2}|\psi_x|^2\,\mathrm dM_0
&=4\pi^2b\sum_{(m,n)\in\mathbb Z^*}m^2|C_{mn}|^2,\\
\int_{\T_b^2}|\psi_y|^2\,\mathrm dM_0
&=\frac{4\pi^2}{b}\sum_{(m,n)\in\mathbb Z^*}n^2|C_{mn}|^2.
\end{split}
\]
Since $\psi$ is a conformal immersion into $\sn$, we obtain
$
|\psi|^2=1,$ $ |\psi_x|^2=|\psi_y|^2,$ $\langle\psi_x,\psi_y\rangle=0.
$
Therefore
\begin{equation}
\sum_{(m,n)\in\mathbb Z^*}(b^2m^2-n^2)|C_{mn}|^2=0,
\label{eq:prop31-conformal}
\end{equation}
\begin{equation}
\sum_{(m,n)\in\mathbb Z^*}|C_{mn}|^2=1,
\label{eq:prop31-mass}
\end{equation}
\begin{equation}
A(\psi_K)
:=\frac14\int_{\T_b^2}|\nabla\psi|^2\,\mathrm dM_0
=\frac12\int_{\T_b^2}|\psi_x|^2\,\mathrm dM_0
=2\pi^2b\sum_{(m,n)\in\mathbb Z^*}m^2|C_{mn}|^2.
\label{eq:prop31-area}
\end{equation}
By \eqref{eq:prop31-conformal} and \eqref{eq:prop31-mass}, we get
\[
\begin{split}
\sum_{(m,n)\in\mathbb Z^*}\bigl[m^2+4(n^2-1)\bigr]|C_{mn}|^2
&=\sum_{(m,n)\in\mathbb Z^*}m^2|C_{mn}|^2
+4\sum_{(m,n)\in\mathbb Z^*}n^2|C_{mn}|^2
-4\sum_{(m,n)\in\mathbb Z^*}|C_{mn}|^2\\
&=(4b^2+1)\sum_{(m,n)\in\mathbb Z^*}m^2|C_{mn}|^2-4.
\end{split}
\]
Since $m^2+4(n^2-1)\ge0$ for every $(m,n)\in\mathbb Z^*$, it follows that
$
\sum_{(m,n)\in\mathbb Z^*}m^2|C_{mn}|^2\ge\frac4{4b^2+1}.
$
Together with \eqref{eq:prop31-area}, this yields
\[
A(\psi_K)\ge\frac{8\pi^2b}{4b^2+1}.
\]

Consider the equality case. If
$A(\psi_K)=\frac{8\pi^2b}{4b^2+1}$, only the modes $(0,\pm1)$ and $(\pm2,0)$ remain. A simple calculation shows this is impossible.
\end{proof}

The proof of Proposition \ref{prop:fourier-lower-bound} shows that
\[
R:=\sum_{(m,n)\in\mathbb Z^*}\bigl[m^2+4(n^2-1)\bigr]|C_{mn}|^2>0.
\]
Next we derive a quantitative lower bound of $R$, which improves the estimate for $A(\psi_K)$.

\begin{theorem}\label{thm:improved-lower-bound}
Let $\psi_K:\KB\to \sn$ be a branched conformal immersion of $(\KB,ds_0^2)$ into $\sn$. Assume that its lift $\psi$ satisfies
$\int_{\T_b^2}\psi\,\mathrm  dM_0=0$. Then
\[
A(\psi_K)>
\frac{\pi^2b}{4b^2+1}\bigl(8+2R_*(b)\bigr),
\]
where
\[
R_*(b)=
\frac{1024b^4}
{576b^6+3056b^4+828b^2+9
+(4b^2+1)(4b^2+3)\sqrt{1808b^4+1560b^2+9}}.
\]
\end{theorem}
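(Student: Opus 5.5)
By the proof of Proposition \ref{prop:fourier-lower-bound}, \eqref{eq:prop31-conformal}--\eqref{eq:prop31-area} give the exact identity
\[
A(\psi_K)=2\pi^2b\sum_{\mathbb Z^*}m^2|C_{mn}|^2=\frac{2\pi^2b}{4b^2+1}\,(4+R)=\frac{\pi^2b}{4b^2+1}\,(8+2R),
\]
with $R=\sum_{\mathbb Z^*}w_{mn}|C_{mn}|^2$ and $w_{mn}=m^2+4n^2-4$. So the theorem is equivalent to the strict bound $R>R_*(b)$. The weight $w_{mn}$ vanishes on $\mathbb Z^*$ only at the four \emph{low modes} $(0,\pm1)$ and $(\pm2,0)$. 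It is at least $3$ at $(\pm1,\pm1)$ and at least $5$ at $(\pm2,\pm1)$, $(0,\pm2)$, $(\pm3,\pm1)$, and so on. I therefore split $\psi=X+Y$, where $X$ is the low-mode part and $Y$ the rest. I set $\alpha=2|C_{01}|^2$, $\beta=2|C_{20}|^2$, and $\varepsilon=\sum_{\rm high}|C_{mn}|^2$, so that $\alpha+\beta+\varepsilon=1$ and $R\ge 3\varepsilon$, with refined weights kept where needed.

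The structural input is the symmetry. Combining $C_{-m,-n}=\overline{C_{mn}}$ with \eqref{eq:deck-relation} gives $C_{0,-1}=C_{01}=\overline{C_{01}}$, so $C_{01}$ is a \emph{real} vector. Hence $X=2C_{01}\cos(2\pi y/b)+2\,\mathrm{Re}\bigl(C_{20}e^{4\pi \mathrm i x}\bigr)$. This is why low modes alone cannot give a conformal map into the sphere, which was the equality case excluded in Proposition \ref{prop:fourier-lower-bound}. To quantify this, I will take the $(0,2)$ Fourier coefficient of two pointwise identities:
\begin{enumerate}[(i)]
\item $|\psi|^2\equiv1$. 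The low–low contribution is $C_{01}\cdot C_{01}=\alpha/2$. Every other contribution contains at least one high mode, for example $2C_{02}\cdot C_{00}$ (which vanishes), $C_{11}\cdot C_{-1,1}$, and the low–high cross terms $C_{20}\cdot C_{-2,2}$, $C_{01}\cdot C_{03}$.
\item $|\psi_x|^2-|\psi_y|^2\equiv0$. The low–low contribution here is $-(2\pi/b)^2\,C_{01}\cdot C_{01}$ plus zero from $C_{20}$, and again the remainder involves high modes.
\end{enumerate}
I will form the linear combination of (i) and (ii) that eliminates as many cross terms with large weight as possible. This leaves an identity of the form $\alpha/2=\sum(\text{bilinear terms in }C)$, in which every term contains a high mode. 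Each low–high term is then bounded by Cauchy–Schwarz as $\lesssim\sqrt{\alpha+\beta}\,\bigl(\sum c_{mn}^2|C_{mn}|^2/w_{mn}\bigr)^{1/2}\sqrt{R}$, and each high–high term by $R$ times a constant. In the same way, the zeroth coefficient of (ii), which is \eqref{eq:prop31-conformal} itself, reads $4b^2\beta-\alpha=\sum_{\rm high}(n^2-b^2m^2)|C_{mn}|^2$. This bounds $|4b^2\beta-\alpha|$ linearly by $R$ with $b$-dependent constants. So if $R$ is small, then $\alpha\approx4b^2\beta$, and $\alpha\approx4b^2/(4b^2+1)$ is bounded away from $0$.

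Combining the two constraints with $\alpha+\beta+\varepsilon=1$ gives an inequality of the shape $\alpha\le c_1(b)\sqrt{R}+c_2(b)R$, together with $\alpha\ge \frac{4b^2}{4b^2+1}-c_3(b)R$. Eliminating $\alpha$ produces a quadratic inequality in $\sqrt R$, equivalently a $2\times2$ generalized eigenvalue problem. Its positive root should be exactly $R_*(b)$: the radical $\sqrt{1808b^4+1560b^2+9}$ and the factor $(4b^2+1)(4b^2+3)$ indicate that the relevant constants come from the weights $3$ at $(\pm1,\pm1)$ and the $b$-dependent ratios $n^2-b^2m^2$. Strictness follows because equality in all the Cauchy–Schwarz steps would force all high modes outside a finite set to vanish, and a finite check then excludes that configuration as in Proposition \ref{prop:fourier-lower-bound}.

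The main obstacle is the bookkeeping in the Cauchy–Schwarz step. I must choose which cross terms appear, which combination of (i) and (ii) to take, and how to weight the high modes, so that the constants are optimal and reproduce the stated $R_*(b)$, rather than merely some positive lower bound. I would first try using only the modes $(\pm1,\pm1)$, $(\pm2,\pm2)$, $(0,\pm3)$, $(0,\pm2)$ in the cross terms and bounding everything else crudely by weight $\ge5$.
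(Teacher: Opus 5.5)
Your outline matches the paper's. It has the identity $A(\psi_K)=\frac{\pi^2b}{4b^2+1}(8+2R)$ and the quantities $\alpha=\mathfrak u$, $\beta=\mathfrak v$. It gets a lower bound for $\alpha$ by eliminating $\beta$ between the mass identity and \eqref{eq:prop31-conformal}; done termwise, this is Lemma \ref{lem:u-lower} with $c_3=\frac{1+3b^2}{4b^2+1}$. It gets an upper bound for $\alpha$ from the $(0,2)$ Fourier coefficient of a combination of $|\psi|^2\equiv1$ and $|\psi_x|^2\equiv|\psi_y|^2$. But the proposal stops exactly where the content of the statement lies. You never fix the combination or the constants, and you concede that reproducing $R_*(b)$, rather than some positive bound, is still open. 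As written, you obtain at best $R>c(b)$ for an unspecified $c(b)>0$, which does not prove the stated inequality. Moreover, the shape you do commit to loses the decisive gain. Suppose you bound the low--high terms using $\sqrt{\alpha+\beta}\le1$ and solve $\frac{4b^2}{4b^2+1}\le c_1\sqrt R+c_2R$ as a quadratic in $\sqrt R$. Then even with the paper's $c_1,c_2$, the root is strictly smaller than $R_*(b)$.

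Three ingredients are missing.

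(a) The specific combination $\frac34(|\psi|^2-1)+\frac{b^2}{16\pi^2}(|\psi_x|^2-|\psi_y|^2)$. Its $(0,2)$ coefficient is $\sum Q_{mn}C_{mn}\overline{C_{m,n-2}}$ with $Q_{mn}=\frac{b^2m^2-(n-1)^2+4}{4}$. Here $Q_{01}=1$ but $Q_{03}=Q_{0,-1}=0$, so the cross term between $C_{03}$ and the \emph{large} low mode $C_{01}$ drops out (recall $\alpha\approx\frac{4b^2}{4b^2+1}$). The two low--high cross terms cannot both be removed, so this choice is essential. The only survivor is $(4b^2+3)\operatorname{Re}(C_{20}\overline{C_{2,-2}})$. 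Since $w_{\pm2,\pm2}=16$, it is bounded by $(4b^2+3)\sqrt{\beta R}/(8\sqrt2)$, which involves $\beta$ alone.

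(b) The pointwise bound $|Q_{mn}|\le\frac{4+b^2}{4}\sqrt{w_{mn}w_{m,n-2}}$ on the high--high indices (Lemma \ref{lem:A1}). This gives $|\mathcal J|\le\frac{4+b^2}{4}R$ for the high--high sum $\mathcal J$.

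(c) The use of $\beta<1-\alpha$, with the lower bound for $\alpha$ inserted \emph{inside} the square root:
\[
\frac{4b^2}{4b^2+1}<\Bigl(\frac{1+3b^2}{4b^2+1}+\frac{4+b^2}{2}\Bigr)R+\frac{4b^2+3}{4\sqrt2}\sqrt{\Bigl(\frac{1}{4b^2+1}+\frac{1+3b^2}{4b^2+1}R\Bigr)R}.
\]
After squaring, this is a quadratic in $R$, not in $\sqrt R$. Its discriminant is $\frac{(4b^2+3)^2(1808b^4+1560b^2+9)}{1024(4b^2+1)^2}$. That is the source of the radical and of the factor $(4b^2+1)(4b^2+3)$, not the $(\pm1,\pm1)$ weights.

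Strictness is then automatic, with no equality-case analysis. $R>0$ forces a nonzero high mode, so $\beta<1-\alpha$.

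Finally, correct the weights. Since $w_{mn}=m^2+4n^2-4$, you have $w_{\pm1,\pm1}=1$ (not $3$), $w_{\pm2,\pm1}=4$, $w_{0,\pm2}=12$ and $w_{\pm3,\pm1}=9$. So only $R\ge\varepsilon$ holds, and the $(\pm1,\pm1)$ modes are exactly where the constant in Lemma \ref{lem:u-lower} is sharp. Also, the low--low $(0,2)$ contribution of $|\psi_x|^2-|\psi_y|^2$ is $+\frac{4\pi^2}{b^2}|C_{01}|^2$, not minus.
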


To prove Theorem \ref{thm:improved-lower-bound}, we need several lemmas. First set
\[
\mathfrak u:=|C_{01}|^2+|C_{0,-1}|^2=2|C_{01}|^2,
\qquad
\mathfrak v:=|C_{20}|^2+|C_{-2,0}|^2=2|C_{20}|^2.
\]
By Proposition \ref{prop:fourier-lower-bound}, we have $R>0$, and hence
\begin{equation}
\mathfrak u+\mathfrak v<1.
\label{eq:uvR}
\end{equation}

To derive an inequality involving only $R$, we seek two estimates relating $\mathfrak u$, $\mathfrak v$, and $R$. Combining them with \eqref{eq:uvR} will then yield an inequality for $R$. To this end, we use the conformal condition and $|\psi|=1$ to derive lower and upper bounds for $\mathfrak u$ in terms of $\mathfrak v$ and $R$.

\begin{lemma}\label{lem:u-lower}
The following lower bound for $\mathfrak u$ holds:
\begin{equation}
\mathfrak u\ge\frac{4b^2}{4b^2+1}-\frac{1+3b^2}{4b^2+1}R.
\label{eq:lemma33-lower}
\end{equation}
\end{lemma}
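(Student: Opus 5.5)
The idea is to use the same device as in the proof of Proposition \ref{prop:fourier-lower-bound}: find a quadratic weight on the Fourier modes that vanishes exactly at $(0,\pm1)$ and is at least $1$ at every other mode of $\mathbb Z^*$. Throughout I write
\[
S:=\sum_{(m,n)\in\mathbb Z^*}m^2|C_{mn}|^2 .
\]
From the mass identity \eqref{eq:prop31-mass}, the conformality identity \eqref{eq:prop31-conformal} and the definition of $R$, I will use three facts:
\[
\sum_{(m,n)\in\mathbb Z^*}|C_{mn}|^2=1,\qquad \sum_{(m,n)\in\mathbb Z^*}n^2|C_{mn}|^2=b^2S,\qquad R=(4b^2+1)S-4 .
\]
Hence for any weight $w(m,n)=c_0+c_1m^2+c_2n^2$,
\[
\sum_{(m,n)\in\mathbb Z^*} w(m,n)\,|C_{mn}|^2=c_0+(c_1+c_2b^2)\,S,
\]
which is explicit in $R$.

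\textbf{Choice of weight.} I take
\[
w(m,n)=m^2+3n^2-3 .
\]
It vanishes at $(0,\pm1)$, which is necessary because the contribution $\mathfrak u$ from $(0,\pm1)$ must not be weighted. I then check that $w\ge1$ on the rest of $\mathbb Z^*$, case by case:
\begin{itemize}
\item If $n=0$, then $m$ is even and nonzero (odd $m$ and $(0,0)$ are excluded from $\mathbb Z^*$), so $w=m^2-3\ge1$.
\item If $n=\pm1$ and $m\neq0$, then $w=m^2\ge1$.
\item If $|n|\ge2$, then $w\ge 9$.
\end{itemize}

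\textbf{The estimate.} Since $w\ge1$ off $(0,\pm1)$ and $w=0$ there,
\[
1-\mathfrak u=\sum_{(m,n)\in\mathbb Z^*\setminus\{(0,\pm1)\}}|C_{mn}|^2\le\sum_{(m,n)\in\mathbb Z^*}w(m,n)|C_{mn}|^2=-3+(1+3b^2)S .
\]
Substituting $S=(R+4)/(4b^2+1)$ gives
\[
1-\mathfrak u\le\frac{-3(4b^2+1)+4(1+3b^2)+(1+3b^2)R}{4b^2+1}=\frac{1+(1+3b^2)R}{4b^2+1}.
\]
This rearranges to $\mathfrak u\ge \frac{4b^2}{4b^2+1}-\frac{1+3b^2}{4b^2+1}R$, which is \eqref{eq:lemma33-lower}.

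The only real obstacle is finding the right weight. I determine it by requiring that $w(0,\pm1)=0$ and that the resulting constant and $S$-coefficients match the target inequality. These two conditions force $c_0=-3$, $c_2=3$ and $c_1=1$. After that, one only has to check positivity on the finitely many low modes, which is exactly where the deck relation (no modes with $n=0$ and $m$ odd) is used.
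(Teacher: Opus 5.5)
Your proof is correct and is essentially the paper's argument. Your weight $m^2+3n^2-3$ exceeds $1$ by exactly $m^2+3n^2-4$, which vanishes at $(\pm2,0)$. So your inequality $1-\mathfrak u\le\sum_{(m,n)\in\mathbb Z^*}(m^2+3n^2-3)|C_{mn}|^2$ is equivalent to $\sum_{(m,n)\in\widehat\Sigma}(m^2+3n^2-4)|C_{mn}|^2\ge0$. That is precisely the positivity the paper uses, via the identity $b^2(m^2-4)-n^2+(1+3b^2)(m^2+4n^2-4)=(4b^2+1)(m^2+3n^2-4)$. The only difference is bookkeeping: you work with the global sum $S$ and $R=(4b^2+1)S-4$, rather than isolating $\mathfrak v$ explicitly.
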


\begin{proof}
Let
\[
w_{mn}:=m^2+4(n^2-1),
\qquad
\widehat\Sigma:=\mathbb Z^*\setminus\{(0,\pm1),(\pm2,0)\}.
\]
By Proposition \ref{prop:fourier-lower-bound}, we have
\begin{equation}
\mathfrak u+\mathfrak v+\sum_{(m,n)\in\widehat\Sigma}|C_{mn}|^2=1,
\label{eq:lemma33-mass}
\end{equation}
and \eqref{eq:prop31-conformal} gives
\begin{equation}
\mathfrak u=4b^2\mathfrak v+\sum_{(m,n)\in\widehat\Sigma}(b^2m^2-n^2)|C_{mn}|^2.
\label{eq:lemma33-conformal}
\end{equation}
Substituting \eqref{eq:lemma33-mass} into \eqref{eq:lemma33-conformal}, we obtain
\[
(4b^2+1)\mathfrak u
=4b^2+\sum_{(m,n)\in\widehat\Sigma}
\bigl[b^2(m^2-4)-n^2\bigr]|C_{mn}|^2.
\]
For every $(m,n)\in\widehat\Sigma$, it is easy to see that $b^2(m^2-4)-n^2+(1+3b^2)\bigl[m^2+4(n^2-1)\bigr]=(4b^2+1)(m^2+3n^2-4)\ge0$. Hence, by the definition of $R$, we have
\[
(4b^2+1)\mathfrak u\ge4b^2-(1+3b^2)R.
\]
Dividing by $4b^2+1$ gives \eqref{eq:lemma33-lower}.
\end{proof}

We next derive an upper bound for $\mathfrak u$ in terms of $\mathfrak v$ and $R$.

\begin{proposition}\label{lem:u-upper}
The following upper bound for $\mathfrak u$ holds:
\begin{equation}
\mathfrak u\le\frac{4b^2+3}{4\sqrt2}\sqrt{\mathfrak vR}+\frac{4+b^2}{2}R.
\label{eq:lemma34-upper}
\end{equation}
\end{proposition}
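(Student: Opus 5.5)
The idea is to extract $\mathfrak u$ from a \emph{nonzero} Fourier frequency of the pointwise identities $|\psi|^2\equiv1$ and $\langle\psi_z,\psi_z\rangle\equiv0$. The constraints \eqref{eq:prop31-conformal}–\eqref{eq:prop31-mass} only used the zero frequency. The key observation is that $C_{01}$ is a \emph{real} vector. Indeed, the deck relation \eqref{eq:deck-relation} with $m=0$ gives $C_{0,-1}=C_{01}$, while reality of $\psi$ gives $C_{0,-1}=\overline{C_{01}}$. Hence the bilinear product $C_{01}\cdot C_{01}$ equals $|C_{01}|^2=\mathfrak u/2$. This is what makes the frequency $(0,2)$ useful: the diagonal term $(0,1)+(0,1)=(0,2)$ produces exactly $\mathfrak u/2$, up to a weight.

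\textbf{Step 1: the frequency-$(0,2)$ identities.} I would write $\psi_z=\tfrac12(\psi_x-\mathrm i\psi_y)=\pi\sum(\mathrm i m+n/b)C_{mn}e^{2\pi\mathrm i(mx+ny/b)}$. Then I would take the coefficient of $e^{4\pi\mathrm i y/b}$ in the two identities $\sum_{k+k'=(0,2)}C_k\cdot C_{k'}=0$ and $\sum_{k+k'=(0,2)}(\mathrm i m+\tfrac nb)(\mathrm i m'+\tfrac{n'}b)C_k\cdot C_{k'}=0$. In each sum I would isolate three groups of pairs $(k,k')$:
\begin{enumerate}[(i)]
\item the diagonal pair $k=k'=(0,1)$, which gives $\mathfrak u/2$ times $1$, respectively $1/b^2$;
\item the pairs in which one index is $(\pm2,0)$ and the other is $(\mp2,2)\in\widehat\Sigma$;
\item all remaining pairs, which have both indices in $\widehat\Sigma$.
\end{enumerate}
Pairs mixing $(0,\pm1)$ with $(0,1)$ are impossible, and $(0,-1)$ pairs only with $(0,3)\in\widehat\Sigma$. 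Those mixed terms must also be handled. I would try to choose a real linear combination $\alpha\cdot(\text{first})+\beta\cdot(\text{second})$ in which the coefficient of the $(0,-1),(0,3)$ cross term vanishes, or becomes small enough to absorb. The choice is guided by the shape of the target constant $(4+b^2)/2$.

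\textbf{Step 2: estimating the error terms by $R$.} For group (ii) I would use Cauchy–Schwarz: $|C_{20}\cdot C_{-2,2}|\le|C_{20}|\,|C_{-2,2}|$. Here $|C_{20}|^2=\mathfrak v/2$, and $|C_{\pm2,2}|^2\le R/w_{22}=R/16$ because $w_{22}=4+12=16$ and every $w_{mn}\ge0$ on $\widehat\Sigma$. This produces a term of the form $c(b)\sqrt{\mathfrak vR}$. The multipliers $(\mathrm i m+n/b)$ for $(2,0)$ and $(-2,2)$ should combine to give the factor $4b^2+3$ after clearing $b^2$, and $1/(4\sqrt2)=\sqrt{1/2}\cdot\sqrt{1/16}\cdot\tfrac12\cdot2$ is consistent with this bookkeeping. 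For group (iii) I would bound $\bigl|\sum_{k+k'=(0,2)}\mu_{kk'}C_k\cdot C_{k'}\bigr|\le\sum_{k\in\widehat\Sigma}\nu_k|C_k|^2$. This is done by AM–GM on each pair, with weights chosen so that $\nu_k\le\tfrac{4+b^2}{2}\cdot\tfrac{w_k}{\text{(normalization)}}$ for every $k\in\widehat\Sigma$. Summing then gives $\le\text{const}\cdot R$.

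\textbf{Main obstacle.} I expect the hardest part to be the weight bookkeeping in group (iii). The multipliers $(\mathrm i m+n/b)(\mathrm i m'+n'/b)$ grow quadratically, so they must be compared against $w_{mn}=m^2+4(n^2-1)$ uniformly over all pairs $k+k'=(0,2)$. Low modes such as $(0,3)$, $(\pm1,\pm1)$, $(\pm1,1)$ paired with $(\mp1,1)$, and $(0,2)$ paired with $(0,0)$ (excluded since $C_{00}=0$) need individual checks, because their $w$ is small. I would first try the combination that kills the $1/b^2$ asymmetry, namely multiplying the conformal identity by $b^2$ and adding the $|\psi|^2$ identity with a suitable coefficient. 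I would then verify that the inequality $\nu_k\le\tfrac{4+b^2}{2}w_k$ reduces to a polynomial inequality in $(m,n)$. That inequality should be checkable by the same kind of completion as in Lemma \ref{lem:u-lower}.
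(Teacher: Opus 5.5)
Your route is the paper's. Lemma \ref{lemma-id-u} takes the $(0,2)$ Fourier coefficient of $F=\tfrac34(|\psi|^2-1)+\tfrac{b^2}{16\pi^2}(|\psi_x|^2-|\psi_y|^2)$, which is the combination you propose to look for: the ratio of coefficients kills the $(0,-1),(0,3)$ term exactly, so nothing needs absorbing. Lemma \ref{lem:A1} is your group (iii), via $|Q_{mn}|\le\frac{4+b^2}{4}\sqrt{w_{mn}w_{m,n-2}}$ on $\widetilde\Sigma$ and Cauchy--Schwarz; equality holds at $(\pm1,1)$, which is where $\frac{4+b^2}{2}$ comes from. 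However, two steps as you set them up do not yield the coefficient $\frac{4b^2+3}{4\sqrt2}$.

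First, you use the coefficient of $\mathrm e^{4\pi\mathrm i y/b}$ in $\psi_z\cdot\psi_z$. Since $4\,\psi_z\cdot\psi_z=|\psi_x|^2-|\psi_y|^2-2\mathrm i\langle\psi_x,\psi_y\rangle$, this also carries the $(0,2)$ coefficient of $\langle\psi_x,\psi_y\rangle$. That coefficient is purely imaginary, because this function is $\tau_b$-anti-invariant, so your identity is already real and ``taking real parts'' does not remove it. With your normalization the weights become $Q_{mn}+\frac{\mathrm i}{2}bm(1-n)$. The four group (ii) terms then sum to $(4b^2+3)\operatorname{Re}X-4b\operatorname{Im}X$ with $X=C_{20}\cdot C_{-2,2}$. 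Since the phase of $X$ is unconstrained, this is only bounded by $\sqrt{(4b^2+3)^2+16b^2}\,|X|$, not by $(4b^2+3)|X|$. You must use the real identity $|\psi_x|^2=|\psi_y|^2$ by itself, i.e.\ the Fourier coefficient of the function $\operatorname{Re}(\psi_z\cdot\psi_z)$, as the paper does. The imaginary weights happen to be harmless in group (iii), but not in group (ii).

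Second, the bound $|C_{\pm2,2}|^2\le R/16$ loses a factor $2$. It gives $\frac{\mathfrak u}{2}\le(4b^2+3)\sqrt{\mathfrak v/2}\,\sqrt{R/16}+\frac{4+b^2}{4}R$, i.e.\ $\mathfrak u\le\frac{4b^2+3}{2\sqrt2}\sqrt{\mathfrak vR}+\frac{4+b^2}{2}R$. The factor ``$\tfrac12\cdot2$'' in your bookkeeping has no source. The missing observation is that the deck relation \eqref{eq:deck-relation} and reality give $|C_{2,2}|=|C_{2,-2}|=|C_{-2,2}|=|C_{-2,-2}|$, and all four modes have $w=16$. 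Hence $R\ge64|C_{2,-2}|^2$, and so $|C_{20}|\,|C_{2,-2}|\le\frac{\sqrt{\mathfrak vR}}{8\sqrt2}$. With these two corrections your argument coincides with the paper's proof and gives exactly \eqref{eq:lemma34-upper}.
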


\begin{proof}
By Lemma \ref{lemma-id-u}, we get
$$\frac {\mathfrak u}{2}
+(4b^2+3)\operatorname{Re}
\bigl(C_{2,0}\overline{C_{2,-2}}\bigr)
+\mathcal J=0.$$
Since $w_{2,2}=w_{2,-2}=w_{-2,2}=w_{-2,-2}=16$, the definition of $R$ gives
\begin{equation}
4\times16|C_{2,-2}|^2=64|C_{2,-2}|^2\le R.
\label{eq:lemma34-C22}
\end{equation}
Moreover, by the definition of $v$, we have
\begin{equation}
|C_{2,0}|^2=\frac {\mathfrak v}{2}.
\label{eq:lemma34-C20}
\end{equation}
Combining \eqref{eq:lemma34-C22} and \eqref{eq:lemma34-C20},
\begin{equation}
\left|
\operatorname{Re}\bigl(C_{2,0}\overline{C_{2,-2}}\bigr)
\right|
\le |C_{2,0}\overline{C_{2,-2}}|
\le\sqrt{\frac {\mathfrak v}{2}}\sqrt{\frac R{64}}
=\frac{\sqrt{\mathfrak vR}}{8\sqrt2}.
\label{eq:lemma34-cross}
\end{equation}
By Lemma \ref{lem:A1},
we get $|\mathcal J|\le\frac{4+b^2}{4}R$.
Combining this with \eqref{eq:lemma34-decomposition} and
\eqref{eq:lemma34-cross} yields  \eqref{eq:lemma34-upper}.
\end{proof}

We now combine Lemma \ref{lem:u-lower} and Proposition \ref{lem:u-upper} to derive a positive lower bound for $R$.

\begin{lemma}\label{lem:R-lower}
The following lower bound for $R$ holds:
\[
R>\frac{32b^4}{(4b^2+1)^2(\sqrt{\mathfrak B}-\nu)},
\]
where $
\mathfrak B:=\nu^2-4\mu\frac{16b^4}{(4b^2+1)^2}$, and $\mu,\nu$ are defined by
\begin{equation}
\begin{split}
\mu
&:=\left(
\frac{1+3b^2}{4b^2+1}
+\frac{4+b^2}{2}
\right)^2
-\frac{(4b^2+3)^2(1+3b^2)}
{32(4b^2+1)},\\
\nu
&:=-\frac{8b^2}{4b^2+1}
\left(
\frac{1+3b^2}{4b^2+1}
+\frac{4+b^2}{2}
\right)
+\frac{(4b^2+3)^2}{32}
\left(
\frac{4b^2}{4b^2+1}-1
\right).
\end{split}
\label{eq:mu-nu}
\end{equation}
\end{lemma}

\begin{proof}
Substituting \eqref{eq:uvR} into \eqref{eq:lemma34-upper}, we obtain
\[
\mathfrak u<
\frac{4b^2+3}{4\sqrt2}\sqrt{(1-\mathfrak u)R}
+\frac{4+b^2}{2}R.
\]
Combining this with \eqref{eq:lemma33-lower}, we get
\begin{equation}
\begin{split}
\frac{4b^2}{4b^2+1}
<&\frac{1+3b^2}{4b^2+1}R
+\frac{4b^2+3}{4\sqrt2}
\sqrt{
\left(
1-\frac{4b^2}{4b^2+1}
+\frac{1+3b^2}{4b^2+1}R
\right)R
}+\frac{4+b^2}{2}R.
\end{split}
\label{eq:R-inequality}
\end{equation}
Define
\[
L(R):=
\left(
\frac{1+3b^2}{4b^2+1}
+\frac{4+b^2}{2}
\right)R
+\frac{4b^2+3}{4\sqrt2}
\sqrt{
\left(
1-\frac{4b^2}{4b^2+1}
+\frac{1+3b^2}{4b^2+1}R
\right)R
},
\qquad R>0.
\]
Since $\frac{1+3b^2}{4b^2+1}+\frac{4+b^2}{2}>0$ and $1-\frac{4b^2}{4b^2+1}>0$, $L(R)$ is strictly increasing on $(0,\infty)$ and $L(R)\to0$ as $R\to0$. Therefore there exists a unique
$R_*>0$ such that
\[
L(R_*)=\frac{4b^2}{4b^2+1}.
\]
By \eqref{eq:R-inequality}, $R>R_*>0$. It remains to solve for $R_*$. By the definition of $R_*$, we have
\[
\frac{4b^2}{4b^2+1}
=
\left(
\frac{1+3b^2}{4b^2+1}
+\frac{4+b^2}{2}
\right)R_*
+\frac{4b^2+3}{4\sqrt2}
\sqrt{
\left(
1-\frac{4b^2}{4b^2+1}
+\frac{1+3b^2}{4b^2+1}R_*
\right)R_*
}.
\]
Rearranging and squaring gives
\[
\left[
\frac{4b^2}{4b^2+1}
-
\left(
\frac{1+3b^2}{4b^2+1}
+\frac{4+b^2}{2}
\right)R_*
\right]^2
=
\frac{(4b^2+3)^2}{32}
\left(
1-\frac{4b^2}{4b^2+1}
+\frac{1+3b^2}{4b^2+1}R_*
\right)R_*.
\]
Equivalently,
\begin{equation}
\mu R_*^2+\nu R_*+\frac{16b^4}{(4b^2+1)^2}=0.
\label{eq:R-quadratic}
\end{equation}
Using \eqref{eq:mu-nu} and \eqref{eq:R-quadratic}, we get
\[
\mathfrak B
=\nu^2-4\mu\frac{16b^4}{(4b^2+1)^2}
=\nu^2-4\mu(-\mu R_*^2-\nu R_*)
=(2\mu R_*+\nu)^2.
\]
Moreover, it follows that
\[
\begin{split}
-(2\mu R_*+\nu)
=&-2R_*
\left(
\left(
\frac{1+3b^2}{4b^2+1}
+\frac{4+b^2}{2}
\right)^2
-\frac{(4b^2+3)^2(1+3b^2)}
{32(4b^2+1)}
\right)\\
&\qquad+\frac{8b^2}{4b^2+1}
\left(
\frac{1+3b^2}{4b^2+1}
+\frac{4+b^2}{2}
\right)
+\frac{(4b^2+3)^2}{32}
\left(
1-\frac{4b^2}{4b^2+1}
\right)\\
=&2
\left(
\frac{1+3b^2}{4b^2+1}
+\frac{4+b^2}{2}
\right)
\left(
\frac{4b^2}{4b^2+1}
-
\left(
\frac{1+3b^2}{4b^2+1}
+\frac{4+b^2}{2}
\right)R_*
\right)\\
&\qquad+\frac{(4b^2+3)^2}{32}
\left(
1-\frac{4b^2}{4b^2+1}
+2\frac{1+3b^2}{4b^2+1}R_*
\right)\\
>{}&0.
\end{split}
\]
Thus, $2\mu R_*+\nu=-\sqrt{\mathfrak B}$. Combining this with \eqref{eq:R-quadratic} gives
\[
R_*=
\frac{32b^4}
{(4b^2+1)^2(\sqrt{\mathfrak B}-\nu)}.
\]
Since $R>R_*$, the lemma follows.
\end{proof}
The lower bound in Lemma \ref{lem:R-lower} can be written explicitly in terms of $b$.

\begin{proof}[Proof of Theorem \ref{thm:improved-lower-bound}]
By Lemma \ref{lem:R-lower}, we have
$R>R_*(b)$. A direct computation gives
\[
R_*(b)=
\frac{1024b^4}
{576b^6+3056b^4+828b^2+9
+(4b^2+1)(4b^2+3)
\sqrt{1808b^4+1560b^2+9}}.
\] 
Since
$
R=(4b^2+1)
\sum_{(m,n)\in\mathbb Z^*}m^2|C_{mn}|^2-4,
$ 
\eqref{eq:prop31-area} yields Theorem \ref{thm:improved-lower-bound}.
\end{proof}
As an immediate consequence of Theorem \ref{thm:improved-lower-bound}, we obtain the following corollary.
\begin{corollary}\label{cor3.2}
    For every Klein bottle $\KB$, we have
    $$A_c(\KB)\geq \mathcal{R}(b):=\frac{\pi^2b}{4b^2+1}\bigl(8+2R_*(b)\bigr).$$
    \end{corollary}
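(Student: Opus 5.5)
The corollary follows from Theorem \ref{thm:improved-lower-bound} once we pass from an arbitrary branched conformal immersion to a balanced one using the conformal group, as in Li--Yau. By definition,
\[
A_c(\KB,N)=\inf_{\psi_K}\sup_{\TT\in G}A(\TT\circ\psi_K),
\]
with the infimum over branched conformal immersions $\psi_K:\KB\to\sn$. It therefore suffices to show that, for each fixed $\psi_K$,
\[
\sup_{\TT\in G}A(\TT\circ\psi_K)\ge\mathcal R(b).
\]
Taking the infimum over $\psi_K$ and then over $N$ gives the claim.

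Fix such a $\psi_K$ and let $\psi=\psi_K\circ\widetilde\pi$ be its $\tau_b$-invariant lift. Apply Lemma \ref{lemma1.2} to $\psi_K$ on $\KB$, with the measure $\dd M$ induced by $\dd s_0^2$. This gives $g\in\B^{N+1}$ with $\int_{\KB}\Tg\circ\psi_K\,\dd M=0$.

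Integrals over $\KB$ are half of the corresponding integrals over $\TB$. Hence the lift $\Tg\circ\psi$ satisfies $\int_{\TB}\Tg\circ\psi\,\dd M_0=0$. Moreover, $\Tg\circ\psi_K$ is again a branched conformal immersion of $(\KB,\dd s_0^2)$ into $\sn$, because $\Tg$ is a conformal diffeomorphism of $\sn$. Its lift is $\Tg\circ\psi$, which is still $\tau_b$-invariant.

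All hypotheses of Theorem \ref{thm:improved-lower-bound} therefore hold for $\Tg\circ\psi_K$, and the theorem gives
\[
A(\Tg\circ\psi_K)>\frac{\pi^2b}{4b^2+1}\bigl(8+2R_*(b)\bigr)=\mathcal R(b).
\]
Since $\Tg\in G$, this yields $\sup_{\TT\in G}A(\TT\circ\psi_K)>\mathcal R(b)$. Taking infima turns the strict inequality into the non-strict one, $A_c(\KB)\ge\mathcal R(b)$.

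The only point needing care is checking that the balancing lemma is compatible with the quotient structure. Lemma \ref{lemma1.2} must be applied on $\KB$ itself, or equivalently on $\TB$ with the $\tau_b$-invariant measure, so that the Möbius transformation it produces respects the deck symmetry. This holds automatically because we compose $\psi$ with $\Tg$ on the target side, which preserves $\psi\circ\tau_b=\psi$. There is also a normalization to confirm: the area $A(\psi_K)$ in Theorem \ref{thm:improved-lower-bound} is defined in \eqref{eq:prop31-area} as $\frac14\int_{\TB}|\nabla\psi|^2$. This equals the induced area of $\psi_K$ on $\KB$, namely half of the torus area $\frac12\int_{\TB}|\nabla\psi|^2$, so it is the same area functional that enters the definition of $A_c$.
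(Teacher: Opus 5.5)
Your proposal is correct and follows essentially the same route as the paper: balance $\psi_K$ via Lemma~\ref{lemma1.2}, apply Theorem~\ref{thm:improved-lower-bound} to $\Tg\circ\psi_K$, then take the supremum over $G$ and the infimum over immersions and $N$. Your extra checks are consistent with the paper, which leaves them implicit: post-composition with $\Tg$ preserves $\tau_b$-invariance, and $\frac14\int_{\TB}|\nabla\psi|^2\,\dd M_0$ is exactly the area functional entering $A_c$.
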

    \begin{proof}
        By Lemma \ref{lemma1.2},  for any  branched conformal immersion $\psi_K:\KB\rightarrow\sn$ with a lift $\psi$, there exists $g \in \B^{N+1}$ such that$ \int_{\TB} \Tg\circ \psi \, \dd M_0=0$. 
    By  Theorem \ref{thm:improved-lower-bound}, we obtain 
       $Area(\Tg\circ \psi_K)>\frac{\pi^2b}{4b^2+1}\bigl(8+2R_*(b)\bigr). $
In particular, $\sup_{g\in\B^{n+1}}A(\Tg\circ \psi_K)\geq\frac{\pi^2b}{4b^2+1}\bigl(8+2R_*(b)\bigr)$.  Hence   $A_c(\KB)\geq\frac{\pi^2b}{4b^2+1}\bigl(8+2R_*(b)\bigr)$.
    \end{proof}
\begin{proof}[Proof of Theorem \ref{thm conf imm}]
 It follows from (\ref{eq1.4}) and Corollary \ref{cor3.2}.
\end{proof}
    \begin{corollary}
     Let $\psi_K:\KB\longrightarrow \rn$ be a branched conformal immersion. 
Let $b_3<b_4$ be the two solutions to
$\mathcal{R}(b)=\frac{\pi^2b}{4b^2+1}\bigl(8+2R_*(b)\bigr)=2\pi^2$. 
Numerically, $b_3\approx0.408$ and $b_4\approx0.648$. If  $b\in\left(b_3,b_4\right)$, then
    $$ \mathcal{W}(\psi_K)>2\pi^2\approx 6.283\pi.$$

\end{corollary}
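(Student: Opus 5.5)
This corollary follows the same route as the proof of Theorem \ref{thm conf imm}, with the threshold $6\pi$ replaced by $2\pi^2$. First, by \eqref{eq1.4}, for any branched conformal immersion $\psi_K:\KB\to\rn$ we have $\mathcal W(\psi_K)\ge A_c(\KB)$. Second, Corollary \ref{cor3.2} gives $A_c(\KB)\ge \mathcal R(b)$. Hence it suffices to show that $\mathcal R(b)>2\pi^2$ for every $b\in(b_3,b_4)$, and that the equation $\mathcal R(b)=2\pi^2$ has exactly the two positive roots $b_3<b_4$.

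Next I would analyse the one-variable function $\mathcal R(b)=\frac{\pi^2 b}{4b^2+1}\bigl(8+2R_*(b)\bigr)$ on $(0,\infty)$.

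\textbf{Endpoint behaviour.} As $b\to0^+$, we have $R_*(b)=O(b^4)$, so $\mathcal R(b)\to0$. As $b\to\infty$, the denominator of $R_*(b)$ grows like $576b^6+16b^4\sqrt{1808}\,b^2$, so $R_*(b)=O(b^{-2})$. Together with the prefactor $\frac{b}{4b^2+1}\sim \frac1{4b}$, this gives $\mathcal R(b)\to0$.

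\textbf{Unimodality.} $\mathcal R$ is continuous and positive. I would show that $\{b:\mathcal R(b)>2\pi^2\}$ is a single interval. The cleanest way is to show that $\mathcal R$ has a single critical point. Writing $\mathcal R(b)=2\pi^2$ as $\frac{b}{4b^2+1}(4+R_*(b))=2$ and substituting $t=b^2$, I would isolate the square root $\sqrt{1808t^2+1560t+9}$ and square. This turns the equation into a polynomial equation in $t$. Descartes' rule of signs or a Sturm sequence should then show it has at most two positive roots, after discarding spurious roots introduced by squaring.

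\textbf{Existence of two roots.} I would evaluate $\mathcal R$ at an interior point, for instance $b=1/2$. There $\frac{\pi^2 b}{4b^2+1}\cdot 8=2\pi^2$ exactly, and $R_*(1/2)>0$, so $\mathcal R(1/2)>2\pi^2$. With the limits at $0$ and $\infty$, this gives at least one root on each side of $1/2$. Combined with the ``at most two roots'' count, $\{\mathcal R>2\pi^2\}$ is exactly $(b_3,b_4)$.

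\textbf{Numerical values.} The approximations $b_3\approx0.408$ and $b_4\approx0.648$ would come from bisection on the explicit formula for $\mathcal R(b)$.

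\textbf{Main obstacle.} The hardest step is the rigorous root count. Squaring to remove the radical raises the polynomial degree and can introduce extraneous roots, so the sign of the isolated quantity must be tracked.

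If the root count proves messy, I would fall back on a weaker but sufficient argument. Since $R_*>0$, it is enough to bound $R_*$ from below on the interval by a simpler expression. On $(b_3,b_4)$ one could replace $\sqrt{1808b^4+1560b^2+9}$ by an upper bound linear in $b^2$. This produces a rational lower bound for $\mathcal R$ whose superlevel set contains $(b_3,b_4)$ up to the stated numerical precision.
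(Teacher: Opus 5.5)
Your proposal is correct and follows the paper's route: the corollary is just $\mathcal W(\psi_K)\ge A_c(\KB)\ge\mathcal R(b)$ from \eqref{eq1.4} and Corollary \ref{cor3.2}, with $b_3,b_4$ located numerically, and the paper never proves the two-root structure, so your analysis of the limits at $0$ and $\infty$ and of the value at $b=\frac12$ is extra rigor beyond the paper. Two small slips: the reduced equation should read $\frac{b}{4b^2+1}\bigl(4+R_*(b)\bigr)=1$ (not $2$), and because of the odd factor $b$ you should keep $b$ rather than $t=b^2$ as the variable; clearing denominators gives $\mathcal R(b)>2\pi^2\iff(2b-1)^2D(b)<1024b^5$, with $D(b)$ the denominator of $R_*(b)$, and one checks that $(2b-1)^2D(b)/b^5$ is strictly decreasing on $(0,\frac12)$ and strictly increasing on $(\frac12,\infty)$, which gives exactly two roots without any squaring.
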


\begin{remark}~

\begin{enumerate}
\item The bound $\mathcal{R}(b)$ in Corollary~\ref{cor3.2} takes maximum at $\hat{b}\approx0.515$ with $\mathcal{R}(\hat b) \approx20.278
\approx6.454\,\pi$.
    \item In \cite{H-M}, Hirsch and M\"ader-Baumdicker conjectured that for all immersions $\psi$ from Klein bottle into $\rn(n\geq 4)$, its Willmore energy satisfies
    $$\mathcal{W}(\psi)\geq 6\pi \mathrm{E}(\frac{2\sqrt{2}}{3})\approx6.682\pi,$$
   with equality holding if and only if $\psi$ is M\"{o}bius congruent to  Lawson's bipolar minimal Klein bottle $\tilde\tau_{3,1}:\mathbb{K}^2_{b_0}\rightarrow \mathbb S^4$ \cite{Lawson-1970}, where $b_0=\frac{2\mathrm{K}(1/2)}{2\pi}\approx0.537$. Moreover, they proved this conjecture when $\KB=\mathbb{K}^2_{b_0}$. Note that the above estimate does not yield  any progress toward this conjecture. 
\end{enumerate}
 
\end{remark}

\section{Estimates of Willmore Energy for isometrically flat Klein bottles}\label{section5}
In this section, we first estimate the Willmore energy of flat Klein bottles in $\rn$ by Li-Yau's method \cite{L-Y}. We next use the eigenfunction expansion on the Klein bottle, together with the isometric condition, to derive a sharper lower bound for the Willmore energy. This yields the following theorem.
\begin{theorem}\label{thm-iso}
Let $\psi_K:(\KB,\mathrm ds_0^2)\to\rn$ be an isometric immersion, then
\begin{equation}
    \label{eq-main}
    \mathcal W(\psi_K)
>
\begin{cases}
\displaystyle
\frac{\pi^2}{2}\left(b+\frac{3}{b}\right)
+
\frac{2\pi\DD}{b}
\tanh\left(
\frac{\pi\DD}{2}
\right),
& b>\sqrt{\frac{2}{3}},\\[10pt]
\displaystyle
\frac{11}{12}\sqrt{6}\,\pi^2,
& b=\sqrt{\frac{2}{3}},\\[10pt]
\displaystyle
\frac{\pi^2}{2}\left(4b+\frac{1}{b}\right)
+
\frac{\pi\BB}{b}
\cot\left(
\frac{\pi\BB}{2}
\right),
& 0<b<\sqrt{\frac{2}{3}}.
\end{cases}
\end{equation}
Here $\BB:=\sqrt{1-2b^2+3b^4}$ and $\DD:=\sqrt{\frac{3b^2-2}{2}}$.
\end{theorem}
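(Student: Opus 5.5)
The plan is to work on the covering torus $\TB$ with the $\tau_b$-invariant lift $\psi$. I will express everything through the complex Fourier coefficients $C_{mn}$ of \eqref{eq:fourier-expansion}, which obey \eqref{eq:deck-relation}. The argument has three steps: write $\W$ as a weighted sum of $c_{mn}:=|C_{mn}|^2$; derive linear constraints on the $c_{mn}$ from the isometry condition; then bound the sum from below by a Lagrange/dual-certificate argument.

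\textbf{Step 1: energy and isometry constraints.} Since $\psi$ is isometric for $\dd s_0^2$, we have $H=\tfrac12\Delta_0\psi$ and integrals over $\KB$ are half those over $\TB$. Hence
$$\W(\psi_K)=\tfrac18\int_{\TB}|\Delta_0\psi|^2\,\dd M_0=\tfrac{b}{8}\sum_{(m,n)}\lambda_{m,n}^2\,c_{mn}, \qquad \lambda_{m,n}=4\pi^2\Bigl(m^2+\tfrac{n^2}{b^2}\Bigr).$$
Integrating $|\psi_x|^2=|\psi_y|^2=1$ gives $\sum m^2c_{mn}=\tfrac1{4\pi^2}$ and $\sum n^2c_{mn}=\tfrac{b^2}{4\pi^2}$. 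These two constraints alone only give a rational bound.

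To reach the transcendental bound, I will use two pointwise consequences of isometry. First, each $x$-curve $x\mapsto\psi(x,y)$ is a unit-speed closed curve. Second, $\tau_b$ forces the $y$-curves to satisfy $\psi(x+\tfrac12,-y)=\psi(x,y)$, which is why $C_{m0}=0$ for odd $m$. Concretely, for fixed $n$, I will take the $y$-Fourier coefficient $\hat\psi_n(x)$. The conditions $|\psi_x|^2\equiv1$, $\langle\psi_x,\psi_y\rangle\equiv0$ and the Gauss equation $\langle\psi_{xx},\psi_{yy}\rangle-|\psi_{xy}|^2=0$ (flatness) then give, after integration against suitable test functions, additional linear relations linking the modes $(m,n)$ with $(m,n\pm1)$, $(m\pm1,n)$. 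The parity structure ($m$ even with $n\ge0$, or $m$ odd with $n\ge1$) must be tracked throughout.

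\textbf{Step 2: dual certificate.} I will look for constants $\alpha,\beta$ and a correction such that
$$\lambda_{m,n}^2\ \ge\ \alpha\,m^2+\beta\,n^2+\bigl(\text{terms vanishing by the constraints}\bigr)$$
holds for every admissible $(m,n)$. Summing against $c_{mn}$ then gives $\W\ge\tfrac b8\bigl(\tfrac{\alpha}{4\pi^2}+\tfrac{\beta b^2}{4\pi^2}\bigr)$. The polynomial leading terms $\tfrac{\pi^2}{2}(b+\tfrac3b)$ and $\tfrac{\pi^2}{2}(4b+\tfrac1b)$ correspond to the extremal modes $(2,0)$ and $(0,1)$, or $(1,1)$. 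The transcendental corrections should come from optimizing the certificate over a one-parameter family indexed by the odd frequencies. This produces a series of the form $\sum_{r\ \mathrm{odd}}\frac{1}{r^2\pm \kappa^2}$ with $\kappa^2$ proportional to $\pm(3b^2-2)$. Summing it by the Mittag-Leffler expansions
$$\frac{\pi}{2\kappa}\tanh\frac{\pi\kappa}{2}=\sum_{r\ \mathrm{odd}>0}\frac{2}{r^2+\kappa^2},\qquad \frac{\pi}{2\kappa}\tan\frac{\pi\kappa}{2}=\sum_{r\ \mathrm{odd}>0}\frac{2}{r^2-\kappa^2}$$
(and the analogous cotangent form) yields the $\tanh(\pi\DD/2)$ and $\cot(\pi\BB/2)$ terms.

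\textbf{Step 3: case split and strictness.} The sign of $3b^2-2$ decides whether the effective parameter is real or imaginary. This gives the two regimes in \eqref{eq-main}, and the threshold $b=\sqrt{2/3}$ is the limit where the series becomes $\sum 1/r^2$, giving $\tfrac{11}{12}\sqrt6\,\pi^2$. For strict inequality, I will argue as in the equality case of Proposition \ref{prop:fourier-lower-bound}. Equality would force $c_{mn}$ to be supported on the finitely many modes where the certificate is tight, and a direct check shows no isometric immersion has such a spectrum.

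\textbf{Main obstacle.} The hard part is Step 1$\to$2: extracting from the pointwise isometry and Gauss equation a family of linear constraints strong enough that the optimal certificate produces exactly an infinite odd-frequency series. I will first try the one-dimensional reduction. That is, treat $u(y):=\psi(\cdot,y)$ as a curve in $L^2(S^1)$ and use the twisted boundary condition $u(y+b/2)$ related to $u(-y)$ by the half-shift. I then expect to solve the resulting Euler–Lagrange ODE $u''''-\kappa' u''+\dots=0$, whose Green's function on the twisted interval produces the $\tanh$/$\cot$ terms directly.
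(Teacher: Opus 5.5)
There is a genuine gap, and it sits at the step you flag as the main obstacle. Moreover, the framework of Step~2 cannot close it. A certificate summed against $c_{mn}=|C_{mn}|^2$ can only use constraints that are linear in the $c_{mn}$. The Fourier components of $|\psi_x|^2\equiv|\psi_y|^2\equiv1$, $\langle\psi_x,\psi_y\rangle\equiv0$ and the Gauss equation supply only two such relations, the zero-frequency ones $\sum m^2c_{mn}=\frac1{4\pi^2}$ and $\sum n^2c_{mn}=\frac{b^2}{4\pi^2}$. (The zero-frequency parts of $\langle\psi_x,\psi_y\rangle=0$ and of the Gauss equation hold identically.) Every other component contains cross terms $C_{mn}\overline{C_{m-j,n-k}}$ between different modes, for all shifts $(j,k)$ and not only $\pm1$. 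So none of them is a linear relation among the $c_{mn}$. With only the two relations, a certificate of your form cannot beat the value of the corresponding linear program. That value is a piecewise rational function of $b$, and it equals the Li--Yau bound of Proposition~\ref{theorem2.2.1} for $b\le\sqrt{2/3}$ and for $b\ge1$. Hence no optimization over odd frequencies can generate the $\tanh$ and $\cot$ terms.

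Two of your structural guesses are also off. At $b=\sqrt{2/3}$ one has $\DD=0$ and $\BB=1$, so both corrections vanish. The value $\frac{11}{12}\sqrt6\,\pi^2$ is just the common value of the two Li--Yau expressions, made strict by the non-attainment argument; it is not a series $\sum 1/r^2$. And the two regimes are not one family with a parameter turning imaginary: $\DD^2=\frac{3b^2-2}{2}$ and $\BB^2=1+b^2(3b^2-2)$ come from different slices and different mode sets.

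What is missing is a non-diagonal (rank-one) certificate fed by a pointwise constraint on special curves. For $b>\sqrt{2/3}$ the paper restricts $|\psi_x|^2\equiv1$ to the two $\tau_b$-invariant circles $y=0$ and $y=b/2$, where every $S_2$-mode drops out of $\psi_x$. Then $\int_0^1\bigl(|\psi_x(x,0)|^2+|\psi_x(x,\frac b2)|^2\bigr)\,\dd x=2$ becomes the identity \eqref{eq:sum-Ap} for the column sums $A_p^e=\sum_{q\ \mathrm{even}}v_qA_{pq}$, $A_p^o$, $B_p^e$, $B_p^o$. Next, $d_{pq}$ is replaced by the minorant $p^2\alpha_q$, with $\alpha_q=3-\frac2{b^2}+\frac{2q^2}{b^2}$ quadratic in $q$. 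Cauchy--Schwarz then gives $|A_p^e|^2\le C^e(b)\sum_{q\ \mathrm{even}}\alpha_q|A_{pq}|^2$, where $C^e(b)=\sum_{q\ \mathrm{even}}v_q^2/\alpha_q$. Together with $C^o<C^e$ this yields $D\ge\bigl(2\pi^2C^e(b)\bigr)^{-1}$. The transcendental term is thus the \emph{reciprocal} of a Mittag-Leffler sum: $C^e(b)=\frac{\pi b}{2\DD}\coth\frac{\pi\DD}{2}$, whence $2\pi^4D\ge\frac{2\pi\DD}{b}\tanh\frac{\pi\DD}{2}$.

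For $b<\sqrt{2/3}$ the same scheme uses $|\psi_y|^2\equiv1$ on the same circles, where now the $S_1$-modes drop out. It uses the minorant $\widetilde d_{rs}\ge s^2(s^2-\BB^2)/b^4$, and the constant that enters is $\widetilde C^o=\sum_{s\ \mathrm{odd}}(s^2-\BB^2)^{-1}=\frac{\pi}{4\BB}\tan\frac{\pi\BB}{2}$. Your Green's-function intuition is compatible with this: $C^e(b)$ is, up to normalization, a value of the Green's function of the second-order operator in $y$ with symbol $\alpha_q$. But that only works once the quadratic minorant and the point constraints on the invariant circles are in place. A fourth-order Euler--Lagrange equation is not what produces these constants, and neither the Gauss equation nor $\langle\psi_x,\psi_y\rangle=0$ is needed.
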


We separate the proof into several propositions.

 \subsection{A first estimate via the inequality of Li-Yau}
 
\begin{proposition}
    \label{theorem2.2.1}
   Let $\psi_K:(\KB,\dd s^2_0)\rightarrow\rn$ be an isometric immersion. Then we have
   \begin{equation}\label{ineq-LY}
   \mathcal{W}(\psi_K)\geq
   \begin{cases}
     \frac{\pi^2}{2}(b+\frac{3}{b}),&\text{if} \quad b>\sqrt{\frac{2}{3}},\\
       \frac{\pi^2}{2}(4b+\frac{1}{b}),&\text{if} \quad b\leq \sqrt{\frac{2}{3}},
   \end{cases}
   \end{equation}
\end{proposition}
\begin{proof}
We follow the method of Li and Yau in \cite{L-Y},  replacing the torus with the Klein bottle. Since $\psi_K:\KB\longrightarrow\rn$ is isometric, we get as in \cite{L-Y} that
\begin{equation}\label{eq2.2.1}
 \mathcal{W}(\psi_K)=\frac{1}{2}\int_{\TB}|H|^2\dd M_0=\frac{1}{8}\int_{\TB}|\Delta \psi|^2\dd M_0,
\end{equation}
 where $\psi$ is its lift to $\TB$ and $H$ is the mean curvature vector of $\psi$ in $\mathbb{R}^{n}$.
 By translation, set
\begin{equation}\label{eq2.2.2}
    \int_{\TB}\psi \;\dd M_0=0.
\end{equation}
Set $S^*_1:=S_1-\{(0,0)\}$. By the Fourier expansion \eqref{eq:fourier-expansion-map} and \eqref{eq-apq} of $\psi$, we get
 \begin{equation*}
     \begin{aligned}
        \frac{1}{2} \int_{\TB}|\Delta\psi|^2\dd M_0&=8\pi^4 \left(\mathop{\sum}\limits_{(p,q)\in S^*_1}(p^2+\frac{q^2}{b^2})^2a_{pq}+\mathop{\sum}\limits_{(r,s)\in S_2}(r^2+\frac{s^2}{b^2})^2a_{rs}\right)\\
        &=8\pi^4 \left(\mathop{\sum}\limits_{(p,q)\in S^*_1}(p^4+\frac{q^4}{b^4}+2\frac{p^2q^2}{b^2})a_{pq}+\mathop{\sum}\limits_{(r,s)\in S_2}(r^4+\frac{s^4}{b^4}+2\frac{r^2s^2}{b^2})a_{rs}\right).\\
     \end{aligned}
 \end{equation*}
It is direct to check that
  \begin{equation*}
      \begin{split}
          p^4+\frac{q^4}{b^4}+2\frac{p^2q^2}{b^2}&\geq 4p^2+\frac{q^2}{b^4}, \qquad\qquad  \text{~for}\quad (p,q)\in  S^*_1, \\
          r^4+\frac{s^4}{b^4}+2\frac{r^2s^2}{b^2}&\geq r^2+\frac{s^2}{b^4}+2\frac{r^2}{b^2},\qquad \text{for}\quad(r,s)\in S_2.
      \end{split}
  \end{equation*}
  As a result,
  \begin{equation*}
       \frac{1}{2} \int_{\TB}|\Delta\psi|^2\dd M_0\geq8\pi^4 \left(\mathop{\sum}\limits_{(p,q)\in S^*_1}(4p^2+\frac{q^2}{b^4})a_{pq}+\mathop{\sum}\limits_{(r,s)\in S_2}(r^2+\frac{s^2}{b^4}+2\frac{r^2}{b^2})a_{rs}\right).
  \end{equation*}
Setting $ S^*= S^*_1\cup S_2$, since
\[4>1+\frac{2}{b^2} \hbox{ when }b>\sqrt{\frac{2}{3}}, \hbox{ and }4\leq 1+\frac{2}{b^2} \hbox{ when } b\leq\sqrt{\frac{2}{3}},\]
 we obtain
 \begin{equation*}
 \frac{1}{2}\int_{\TB}|\Delta \psi|^2\dd M_0\geq
 \begin{cases}
     8\pi^4 \left(\mathop{\sum}\limits_{(k,t)\in S^*}\left(\left(1+\frac{2}{b^2}\right)k^2+\frac{t^2}{b^4}\right)a_{kt}\right),&\text{if} \quad b>\sqrt{\frac{2}{3}},\\
    8\pi^4 \left(\mathop{\sum}\limits_{(k,t)\in S^*}\left(4k^2+\frac{t^2}{b^4}\right)a_{kt}\right),&\text{if} \quad b\leq \sqrt{\frac{2}{3}}.
 \end{cases}
 \end{equation*}

Since $\psi:\TB\longrightarrow\rn$ is isometric,   we obtain
 \begin{equation*}
     \begin{aligned}
        2b=2A(\TB)=\int_{\TB}|\nabla \psi|^2\dd M_0&=2\int_{\TB}|\psi_x|^2\dd M_0=8\pi^2 \mathop{\sum}\limits_{(k,t)\in S^*}k^2a_{kt},\\
                2b=2A(\TB)=\int_{\TB}|\nabla \psi|^2\dd M_0&=2\int_{\TB}|\psi_y|^2\dd M_0=8\pi^2 \mathop{\sum}\limits_{(k,t)\in S^*}\frac{t^2}{b^2}a_{kt}.
     \end{aligned}
 \end{equation*}
Hence
 \begin{equation*}
  \frac{1}{2}\int_{\TB}|\Delta \psi|^2\dd M_0\geq
 \begin{cases}
   4\pi^2A(\KB)(1+\frac{2}{b^2}+\frac{1}{b^2})=2\pi^2(b+\frac{3}{b}),&\text{if} \quad b>\sqrt{\frac{2}{3}},\\
    4\pi^2A(\KB)(4+\frac{1}{b^2})=2\pi^2(4b+\frac{1}{b}),&\text{if} \quad b\leq \sqrt{\frac{2}{3}}.
 \end{cases}
 \end{equation*}
 Together with \eqref{eq2.2.1}, we finish the proof.
\end{proof}

\subsection{An further improvement}
Set
\begin{equation*}
d_{kt}
=
\left(k^2+\frac{t^2}{b^2}\right)^2
-
\left(1+\frac{2}{b^2}\right)k^2
-
\frac{t^2}{b^4},
\qquad
\widetilde d_{kt}
=
\left(k^2+\frac{t^2}{b^2}\right)^2
-
4k^2
-
\frac{t^2}{b^4},
\end{equation*}
and
\begin{equation*}
D
=
\sum_{(p,q)\in S_1^*}d_{pq}a_{pq}
+
\sum_{(r,s)\in S_2}d_{rs}a_{rs},
\qquad
\widetilde D
=
\sum_{(p,q)\in S_1^*} \widetilde d_{pq}a_{pq}
+
\sum_{(r,s)\in S_2} \widetilde d_{rs}a_{rs}.
\end{equation*}
Then by Proposition \ref{theorem2.2.1}, we have
\begin{equation}
d_{kt}\ge 0
\quad\text{when } b>\sqrt{\frac{2}{3}},
\qquad
\widetilde d_{kt}\ge 0
\quad\text{when } 0<b\le\sqrt{\frac{2}{3}},\qquad \forall(k,t)\in S^*.
\label{eq:d-positive}
\end{equation}
Using the Fourier identity in the proof of Proposition \ref{theorem2.2.1}, we obtain
\begin{equation}\label{eq-W-est}
\mathcal W(\psi_K)
=
\begin{cases}
\displaystyle
\frac{\pi^2}{2}\left(b+\frac{3}{b}\right)+2\pi^4D,
& b>\sqrt{\frac{2}{3}},\\[6pt]
\displaystyle
\frac{\pi^2}{2}\left(4b+\frac{1}{b}\right)+2\pi^4\widetilde D,
& 0<b\le \sqrt{\frac{2}{3}}.
\end{cases}
\end{equation}
If the equality holds in (\ref{ineq-LY}), we obtain
\begin{equation*}
\mathcal A^*(\psi)
\subset
\{(0,1),(1,1)\},
\qquad
b>\sqrt{\frac{2}{3}},
\end{equation*}
or
\begin{equation*}
\mathcal A^*(\psi)
\subset
\{(0,1),(2,0)\},
\qquad
0<b<\sqrt{\frac{2}{3}},
\end{equation*}
where $\mathcal A^*(\psi):=\{(k,t)\in S^*\mid a_{kt}\neq 0\}$. In particular, $\mathcal A^*(\psi)
\subset\{(0,1),(2,0),(1,1)\}$, for $b=\sqrt{\frac{2}{3}}$.

An argument similar to that in the proof of Proposition \ref{prop:fourier-lower-bound} shows that equality in (\ref{ineq-LY}) cannot occur.  Consequently, the inequality in (\ref{ineq-LY}) is strict. Thus, the problem reduces to obtaining strictly positive lower bounds for $D$ and $\widetilde D$, which is addressed in the following propositions.
\begin{lemma}\label{lem-piecewise-lower-bound}
Let $
\psi_K:(\KB,\mathrm ds_0^2)\to\rn $ be an isometric immersion. 

\begin{enumerate}

\item If $
b>\sqrt{\frac23}$,
then $
\mathcal W(\psi_K)
>
\frac{\pi^2}{2}
\left(
b+\frac3b
\right)
+
\frac{2\pi\DD}{b}
\tanh\left(
\frac{\pi\DD}{2}
\right)$.
\item If $
0<b<\sqrt{\frac23}$, then $
\mathcal W(\psi_K)
>
\frac{\pi^2}{2}
\left(
4b+\frac1b
\right)
+
\frac{\pi\BB}{b}
\cot\left(
\frac{\pi\BB}{2}
\right)$.
\end{enumerate}
\end{lemma}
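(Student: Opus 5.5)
The plan is to sharpen Proposition \ref{theorem2.2.1} by replacing the \emph{integrated} isometry conditions used there with conditions that hold on every horizontal slice. Then I bound $D$ (resp.\ $\widetilde D$) from below by a one-dimensional variational problem in $y$. The $\tanh$ and $\cot$ terms should be exactly the values of that problem.

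\textbf{Step 1: expansion in $x$ only and slice identities.} I expand the lift only in $x$: $\psi(x,y)=\sum_{m}c_m(y)e^{2\pi \mathrm i mx}$, with $c_{-m}=\overline{c_m}$. By \eqref{eq:deck-relation}, the coefficients satisfy $c_m(-y)=(-1)^m c_m(y)$ and are $b$-periodic in $y$. Since $|\psi_x|^2=|\psi_y|^2=1$ pointwise, integrating in $x$ alone gives, for \emph{every} $y$,
\[
4\pi^2\sum_m m^2|c_m(y)|^2=1,\qquad \sum_m |c_m'(y)|^2=1 .
\]
By Parseval, $8\pi^4 D$ (resp.\ $8\pi^4\widetilde D$) equals $\int_0^b$ of the integrand
\[
\sum_m \Bigl(|c_m''|^2+2(2\pi m)^2|c_m'|^2+(2\pi m)^4|c_m|^2\Bigr),
\]
minus the comparison terms $(1+2/b^2)$ (resp.\ $4$) times $\sum_m (2\pi m)^2|c_m|^2$ and $b^{-2}\sum_m|c_m'|^2$, suitably scaled.

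\textbf{Step 2: reduce to one scalar function.} Using the slice identities, I rewrite the comparison terms pointwise. I group the mode $m=0$ (even in $y$, containing the $(0,q)$ modes), $m=\pm1$ (odd in $y$), and $m=\pm2$ separately. For modes with $m^2\ge 4$ the integrand dominates the comparison terms termwise, as in \eqref{eq:d-positive}. The deficit then concentrates on a single weight, essentially $\rho(y)=\sum_{m\neq 0}|c_m'(y)|^2$, or its complement $|c_0'(y)|^2=1-\rho(y)$. The pointwise identities force $\rho$ to be nonconstant unless only the extremal modes survive, and that case is excluded exactly as in Proposition \ref{prop:fourier-lower-bound}. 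I expect an inequality of the form
\[
D\ \ge\ \frac{c}{b}\int_0^b \Bigl(u'^2+\kappa^2 u^2\Bigr)\,\mathrm dy,
\]
where $u$ is an amplitude built from $c_1$ (resp.\ $c_2$ or $c_0$), the parity condition gives $u(-y)=\pm u(y)$, and a slice normalization of the form $u(y_0)=1$ comes from the unit-speed condition. The constant satisfies $\kappa=\pi\DD/$(period) when $b>\sqrt{2/3}$. When $b<\sqrt{2/3}$ the sign flips, giving $-\kappa^2$ with $\kappa$ proportional to $\BB$.

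\textbf{Step 3: solve the one-dimensional problem and get strictness.} The minimizer of $\int(u'^2+\kappa^2u^2)$ with the parity and point constraint solves $u''=\kappa^2u$ on a half-period. It is therefore a $\cosh$ profile, and its energy is $2\kappa\tanh(\kappa\ell/2)$ for the relevant length $\ell$. With $\kappa\ell=\pi\DD$ this should reproduce $\frac{2\pi\DD}{b}\tanh(\frac{\pi\DD}{2})$. In the trigonometric regime the solution is a $\cos$ profile, giving $\kappa\cot(\kappa\ell/2)$. Here I must check that $\pi\BB<2\pi$ on $(0,\sqrt{2/3})$, since $\BB<1$ there, so that the quadratic form stays coercive and the cotangent is positive. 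Strictness follows because equality would force $c_m$ to be exactly the extremal profile in all relevant modes, which is incompatible with the pointwise identity $|\psi_x|^2=1$ (not just its slice average). Combined with \eqref{eq-W-est}, this gives both cases.

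\textbf{Main obstacle.} The main obstacle is Step 2: isolating a single scalar function and showing that the leftover terms are nonnegative, with constants giving exactly $\DD$ and $\BB$. If the grouping does not close, my first fallback is to keep the two amplitudes for $m=0$ and $m=\pm1$ (resp.\ $\pm2$) coupled and solve a $2\times2$ linear ODE system. I would then read off the same hyperbolic/trigonometric dichotomy from the sign of $3b^2-2$.
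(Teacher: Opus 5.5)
Your Step~3 is, in dual form, what the paper does. Up to normalization, the Cauchy--Schwarz constant $\sum_q v_q^2/\alpha_q$ used there is the value at $y=0$ of the Green's function of $-\partial_y^2+\kappa^2$, and the Mittag-Leffler sums giving $\coth$, $\tanh$, $\tan$ are the energies of your $\cosh$/$\cos$ profiles.

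The genuine gap is Step~2, and the hints you give there point the wrong way. The positive deficit does not come from $\rho$ merely being nonconstant. It comes from evaluating the unit-speed identities on the two slices $y=0$ and $y=b/2$. There, $c_m(-y)=(-1)^mc_m(y)$ together with $b$-periodicity forces $c_m=0$ for odd $m$ and $c_m'=0$ for even $m$.
\begin{itemize}
\item For $b>\sqrt{2/3}$ this gives $4\pi^2\sum_{m\ \mathrm{even},\,m\neq0}m^2|c_m(y_0)|^2=1$ at $y_0\in\{0,b/2\}$. So the amplitude must be built from $c_{\pm2},c_{\pm4},\dots$, for which $d_{pq}\ge p^2\alpha_q$ with $\alpha_q=\frac{2}{b^2}(\DD^2+q^2)>0$.
\item For $b<\sqrt{2/3}$ one uses $|\psi_y|^2=1$ on the same slices. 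The amplitude is then $c_m'$ with $m$ odd, for which $\widetilde d_{rs}\ge s^2(s^2-\BB^2)/b^4$.
\end{itemize}
The amplitudes you name cannot work. $c_{\pm1}$ contains the mode $(1,1)$ with $d_{11}=0$, and it vanishes on both slices. $c_0$ and $c_2$ contain $(0,1)$ and $(2,0)$ with $\widetilde d_{01}=\widetilde d_{20}=0$. In each case the restricted quadratic form has a nontrivial kernel that can carry the unit-speed normalization, so no positive lower bound comes out.

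Second, a single normalization $u(y_0)=1$ does not reach the stated constants. Take $\kappa=2\pi\DD/b$, the value dictated by $\alpha_q$. An even $b$-periodic $u$ with only $u(0)$ prescribed has minimal energy $2\kappa\tanh(\kappa b/2)=2\kappa\tanh(\pi\DD)$. In the end this yields only $\frac{\pi\DD}{b}\tanh(\pi\DD)$, which is strictly less than $\frac{2\pi\DD}{b}\tanh\frac{\pi\DD}{2}$. Your matching $\kappa\ell=\pi\DD$ requires $\ell=b/2$, i.e.\ the constraint on both slices.

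The paper proceeds as follows:
\begin{enumerate}
\item Add the two slice identities.
\item Split each $c_p$ into its $b/2$-periodic and $b/2$-antiperiodic parts (even and odd $q$).
\item Bound these by $C^e=\frac{\pi b}{2\DD}\coth\frac{\pi\DD}{2}$ and $C^o=\frac{\pi b}{2\DD}\tanh\frac{\pi\DD}{2}$ respectively.
\item Keep the larger constant, $C^e$.
\end{enumerate}
For $b<\sqrt{2/3}$ the roles are reversed: the binding part is the antiperiodic one, since $\widetilde C^o=\frac{\pi}{4\BB}\tan\frac{\pi\BB}{2}>\widetilde C^e$. Coercivity of $\int(u'^2-\kappa^2u^2)$ there uses that $u=c_m'$ has no $s=0$ mode and that $\BB<1$.

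Strictness rests on two facts. The Cauchy--Schwarz extremizer has infinitely many nonzero $q$-coefficients, whereas $d_{pq}=p^2\alpha_q$ only at $(2,0)$ (resp.\ $\widetilde d_{rs}=s^2(s^2-\BB^2)/b^4$ only at $(1,1)$). Combined with the strict inequality $C^e>C^o$ (resp.\ $\widetilde C^o>\widetilde C^e$), this rules out equality. Your appeal to the pointwise identity $|\psi_x|^2=1$ does not by itself give this.
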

 The details of the proofs of Lemma \ref{lem-piecewise-lower-bound} is given in Appendix \ref{appendix-proof}.  Here we outline a proof of Lemma \ref{lem-piecewise-lower-bound}. 
When $b>\sqrt{\frac{2}{3}}$, we first evaluate $\psi_x$ at $y=0$ and $y=\frac{b}{2}$, which allows us to eliminate the contribution of the Fourier modes $(r,s)\in S_2$. We then consider
$\int_0^1|\psi_x(x,0)|^2\,\mathrm dx$ and
$\int_0^1\left|\psi_x\left(x,\frac{b}{2}\right)\right|^2\,\mathrm dx$.
This yields the identity
\begin{equation}
\frac{1}{2\pi^2}
=
\sum_{p\in2\mathbb Z^+}
p^2
\left(
|A_p^e|^2
+
|A_p^o|^2
+
|B_p^e|^2
+
|B_p^o|^2
\right).
\label{eq:sum-Ap}
\end{equation}
Here $A_p^e$, $A_p^o$, $B_p^e$, and $B_p^o$ are defined in \eqref{eq:Ape-definition}. Next, we estimate $d_{pq}$ pointwise and apply the Cauchy-Schwarz inequality to obtain upper bounds for
$p^2|A_p^e|^2$, $p^2|A_p^o|^2$, $p^2|B_p^e|^2$ and $p^2|B_p^o|^2$.
Combining these estimates with \eqref{eq:sum-Ap}, we obtain a lower bound for $D$. Finally, together with \eqref{eq-W-est}, this gives a lower bound for $\mathcal W(\psi_K)$. When $0<b<\sqrt{\frac23}$, the argument is the same as in the case $b>\sqrt{\frac23}$, with differentiation with respect to $y$ in place of $x$.
\begin{proof}[Proof of Theorem \ref{thm-iso}]
    Theorem \ref{thm-iso} follows from Propositions \ref{theorem2.2.1} and Lemma \ref{lem-piecewise-lower-bound}.
\end{proof}

\subsection{Applications to the Hirsch-M\"ader-Baumdicker conjecture}

\begin{proof}[Proof of Theorem \ref{thm H-B-M iso}]

We first define the following two quantities:
\begin{equation}\label{eq-J1}
    J_1(b)
:=
\frac{\pi^2}{2}
\left(
b+\frac3b
\right)
+
\frac{2\pi\DD}{b}
\tanh\left(
\frac{\pi\DD}{2}
\right),
\end{equation}
\begin{equation}\label{eq-J2}
    J_2(b)
:=
\frac{\pi^2}{2}\left(4b+\frac{1}{b}\right)
+
\frac{\pi\BB}{b}
\cot\left(
\frac{\pi\BB}{2}
\right).
\end{equation}
Next set $b_*\in\left(\frac12,\frac{1}{\sqrt3}\right)$ to be the solution to (Numerically 
$b_*\approx0.513$)
    \begin{equation}\label{eq-b*}
2-\frac{1}{2b_*^2}
+
\frac{3b_*^4-1}
{  2b_*^2\tilde{b}_*}
\cot\tilde{b}_*
-
(3b_*^2-1)
\csc^2\tilde{b}_*
=0, \hbox{   with $\tilde{b}_*:=\frac{\pi}{2}\sqrt{1-2b_*^2+3b_*^4}$.}
    \end{equation}
 By Lemma \ref{lemma-b-big}, we get
\[
J_1(b)>\inf_{b>\sqrt{\frac{2}{3}}} J_1(b)
=
\lim_{b\to\sqrt{\frac{2}{3}}} J_1(b)
=
\frac{11}{12}\sqrt{6}\,\pi^2
\approx
2.245\,\pi^2.
\]
Lemma \ref{lemma-b-small} and Lemma \ref{lem:J2-lower-bound} yield
\[
\min_{0<b<\sqrt{\frac{2}{3}}}J_2(b)
=
J_2(b_*)
>\frac{\pi^2}{2}\sqrt{5+4\sqrt{11}}
\approx
6.713\pi>2.136\,\pi^2.
\]
Theorem \ref{thm H-B-M iso} then follows directly.
\end{proof}

\section{Examples of isometric immersions from $\KB$ into $\rn$}\label{section6}

We first recall  Tompkins' classical example \cite{T-1941}.
 \begin{example}
     Tompkins' example of Klein bottle into $\mathbb{R}^4$ in \cite{T-1941} is of the form
$$\psi_T=\left(\cos u\cos v,\sin u \cos v, 2\cos\frac{u}{2}\sin v,2\sin\frac{u}{2}\sin v\right).$$
Writing as a conformal immersion   $\psi_T:\mathbb{K}^2_{b_T}\rightarrow\R^4$, we can get that
\begin{equation}\label{eq-Tompkins}
        b_T=\frac{2}{\pi}\mathrm{E}\left(\frac{\sqrt{3}}{2}\right)\approx0.771, \hbox{ and }
     \mathcal{W}(\psi_T)=\frac{4\pi}{3}\left(5\mathrm{E}\left(\frac{\sqrt{3}}{2}\right)+\mathrm{K}\left(\frac{\sqrt{3}}{2}\right)\right)\approx10.95\pi.
\end{equation}
 \end{example}
 The above example shows that explicit immersions can be constructed in certain conformal classes, although their Willmore energy may still be larger than $8\pi$. 
 Breuning, Hirsch and Mäder-Baumdicker proved in \cite{B-H-M} that every conformal class of Klein bottles admits an embedding into $\mathbb{R}^4$ with Willmore energy equal to $8\pi$. Hence, a natural question is whether one can construct flat immersions in various conformal class whose Willmore energy is strictly below $8\pi$. In this section, we construct examples of isometric immersions of $\KB$ into $\rn$ from three different perspectives: further discussions on the inequality of Li-Yau in Proposition \ref{theorem2.2.1}; deformations of Tompkins' Klein bottle;  equivariant examples via odd and even functions.

\subsection{Construction from discrete Fourier modes}

Motivated by the low-frequency modes $(0,1)$, $(2,0)$, and $(1,1)$ appearing in the proof of Proposition \ref{theorem2.2.1} we construct explicit isometric immersions by adding further modes of relatively small norm. 
\begin{example}\label{exmaple b<1}
  For  $0<b\leq1$,  let   $\psi_b:\mathbb{K}^2_b\to \mathbb{S}^6\left(\frac{\sqrt{1+3b^2}}{4\pi}\right)\subset\mathbb{R}^7$ be a flat immersion defined as
$$\begin{aligned}
\psi_b(x,y)=\bigg(&\frac{b}{2\pi}\psi^*_{K,1},\frac{\sqrt{1-b^2}}{4\pi}\cos 4\pi x,
\frac{\sqrt{1-b^2}}{4\pi}\sin 4\pi x\bigg),
\end{aligned}$$
where 
\begin{equation}
    \label{eq-psi-KN}
    \begin{split}
        \psi^*_{K,Z}:=&\left(\frac{\sqrt{3}}{2}\cos\frac{2\pi Zy}{b},
\frac{1}{2}\cos\frac{2\pi Zy}{b}\cos 4\pi x,\frac{1}{2}\cos\frac{2\pi Zy}{b}\sin 4\pi x,\right.\\
&\qquad\left. \sin\frac{2\pi Zy}{b}\cos 2\pi x,\sin\frac{2\pi Zy}{b}\sin 2\pi x\right)\subset\mathbb{S}^4\subset\R^5,~~ Z\in\mathbb{Z}^+.    \end{split}
\end{equation}
A straightforward computation shows 
\begin{equation}\label{eq WEb}
\mathcal{W}(\psi_b)=\frac{\pi^2}{2}\left(\frac{1}{b}-\frac{3}{2}b^3+6b
\right).
\end{equation}
In particular, we have
$\mathcal{W}(\psi_b)<8\pi, $ $\hbox{ when }\,\,\,\, b_5<b<b_6,$
where $b_5=\frac{\sqrt{\mathfrak a}-\sqrt{8-\mathfrak a-\frac{64}{3\pi\sqrt{\mathfrak a}}}}{2}\approx 0.300$ and $b_6=\frac{\sqrt{\mathfrak a}+\sqrt{8-\mathfrak a-\frac{64}{3\pi\sqrt{\mathfrak a}}}}{2}\approx 0.688$. Here we set $\mathfrak a=\frac{8}{3}+\frac{4\sqrt{2}}{3}
\cos\left(\frac{1}{3}\arccos\left(\frac{\sqrt{2}(48-5\pi^2)}{\pi^2}\right)-\frac{4\pi}{3}\right)$. 

\end{example}
 
We can compare $\W(\psi_b)$ and the lower bound in Theorem \ref{thm-iso} in the following figure.
\begin{figure}[H]
    \centering
    \includegraphics[width=0.7\textwidth]{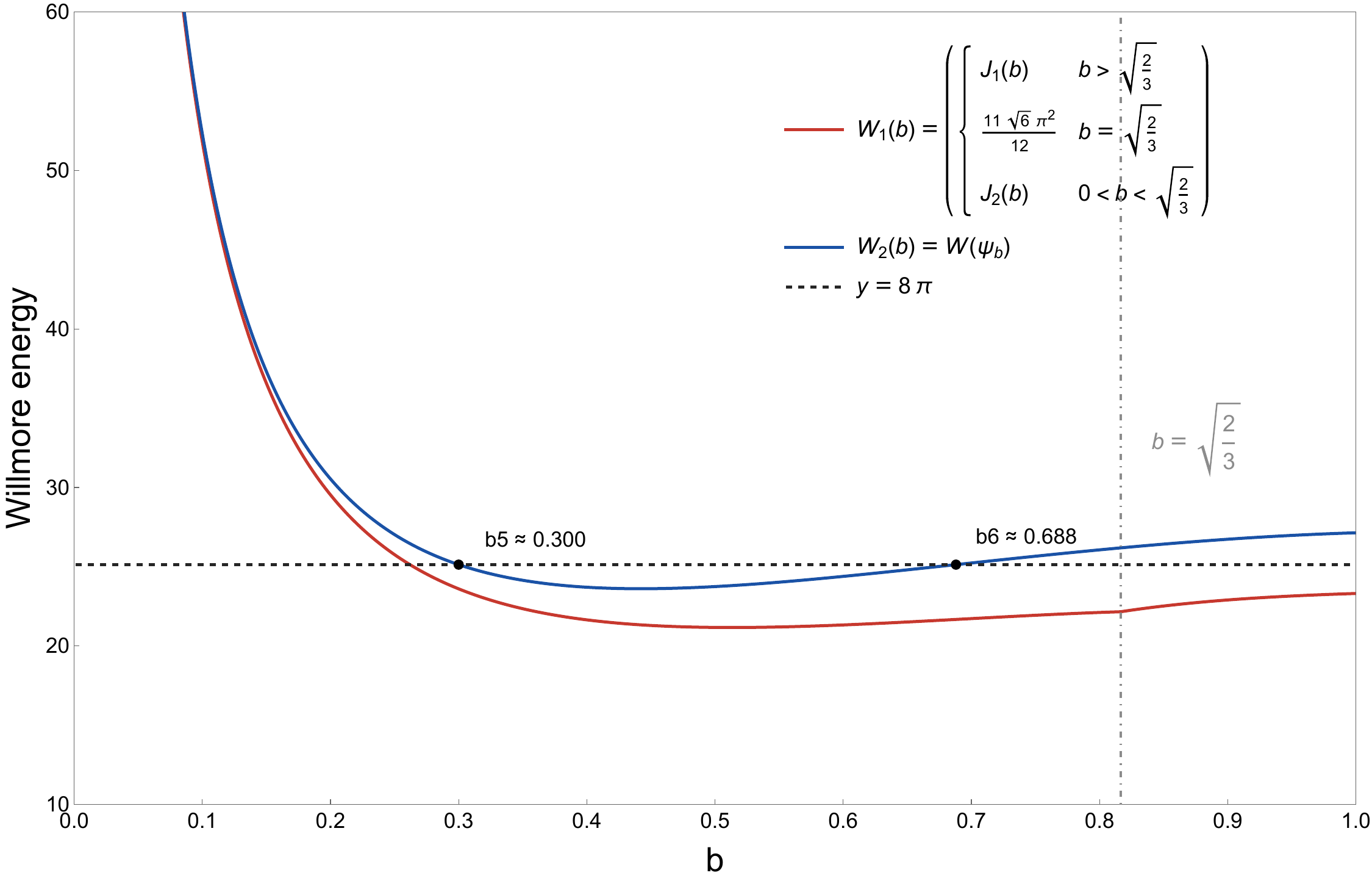} 
    \caption{The lower bound in Theorem~\ref{thm-iso}
and $\W(\psi_b)$ for $0<b\le1$.}
    \label{fig:willmore}
\end{figure}
    \begin{enumerate}
        \item When $b=\sqrt{\frac{2-\sqrt{2}}{3}}$, we obtain the minimum of $\W(\psi_b)\approx7.516\pi>6.682\pi$.
        \item When $b=\frac{2}{\pi}\mathrm{E}\left(\frac{\sqrt{3}}{2}\right)\approx0.771,$ we get $\W(\psi_b)\approx 8.224\pi<\W(\psi_T)\approx10.95\pi$,  Tompkins' Klein bottle, by \eqref{eq-Tompkins}. 
\item   When $b=1$, we obtain 
    $
    (\psi_b)|_{b=1}\subset\mathbb{S}^4(\frac{1}{2\pi})\subset\mathbb{R}^5$ with $\W(\psi_{b=1})=\frac{11\pi^2}{4}\approx8.639\pi$.
    \end{enumerate}



For $b>1$, a natural way to construct isometric immersions of flat Klein bottles with relatively small Willmore energy is to combine two neighboring vertical frequencies bracketing $b$. Setting
$ \mathfrak m=\lfloor b\rfloor,$ $Z=\mathfrak m+1$, we obtain the following example.
\begin{example}\label{exmaple b>1}
Let $b>1$ and let $\psi_{b,Z}(x,y):\mathbb{K}^2_b\to \mathbb{S}^9(\frac{1}{2\pi})\subset\mathbb R^{10}$  be the isometric immersion 
$$
\begin{aligned}
\psi_{b,Z}(x,y)=\bigg(\frac{B_1}{2\pi}\psi^*_{K,\mathfrak m}, \frac{B_2}{2\pi}\psi^*_{K,Z}
\bigg),~ \hbox{ with } B_1=\frac{\sqrt{Z^2-b^2}}{\sqrt{Z^2-\mathfrak m^2}}  \hbox{ and }  B_2=\frac{\sqrt{b^2-\mathfrak m^2}}{\sqrt{Z^2-\mathfrak m^2}}.
\end{aligned}
$$
\end{example}
\begin{remark}
Since the immersion in Example \ref{exmaple b<1} satisfies the balance condition and is isometric, we may rescale it to obtain an immersion into $\mathbb S^6$:
\begin{equation}\label{eq-A-b-1}
    \widetilde{\psi}_{b,1}(x,y)
=
\frac{1}{\sqrt{1+3b^2}}
\left(
2b\psi_{K,1}^{*}(x,y),\,
\sqrt{1-b^2}\cos4\pi x,\,
\sqrt{1-b^2}\sin4\pi x
\right)
\in \mathbb S^6.
\end{equation}
Its area is $
A(\widetilde{\psi}_{b,1})
=
\frac{8\pi^2b}{1+3b^2}$. When $b\to0$, we have $
A(\widetilde{\psi}_{b,1})\to0$. This agrees with the behavior of the lower bound in Theorem \ref{thm:improved-lower-bound} when $b\to0$. The corresponding comparison is shown in the following figure.
\begin{figure}[H]
    \centering
    \includegraphics[width=0.7\textwidth]{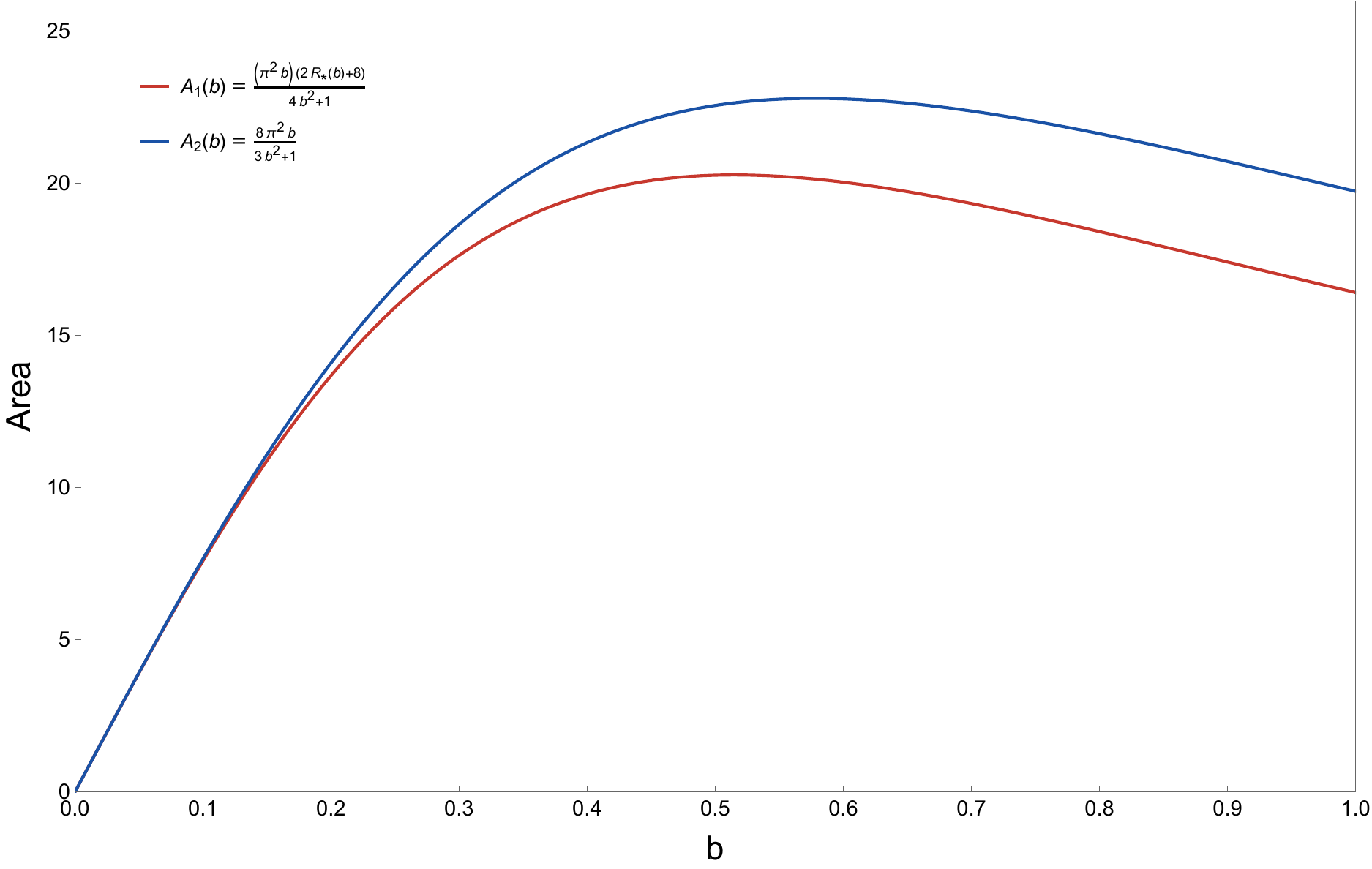} 
    \caption{Comparison of balanced area for $0<b\le 1$.}
    \label{fig:Area-1}
\end{figure}
As seen from the figure above, when $b\to0$, the area of the explicit immersion in (\ref{eq-A-b-1}) has the same asymptotic behavior as the lower bound in Theorem \ref{thm:improved-lower-bound}. Thus, the estimate in Theorem \ref{thm:improved-lower-bound} is consistent with the explicit construction in this regime. When $b>1$, after a similar argument, we obtain
$A(\widetilde\psi_{b,Z})=2\pi^2 b.$
 When $b\to\infty$, the area of the above example much fast than  the bound in Theorem \ref{thm:improved-lower-bound}. 
\end{remark}
 \subsection{Deformation of Tompkins' Klein bottle} Since the Klein bottles in Example \ref{exmaple b>1} have large Willmore energy when $b>1$. We turn to deform Tompkins' Klein bottle \cite{T-1941} in
$\mathbb{R}^5$ to get examples with better behavior.
\begin{example}
    For $a\geq 0$, consider
    \[
\psi_a(x,y)
=
\left(
a\cos y,\,
\cos y\cos x,\,
\cos y\sin x,\,
2\sin y\cos\frac{x}{2},\,
2\sin y\sin\frac{x}{2}
\right).
\]
In particular, $a=0$ gives Tompkins' Klein bottle.  
A direct computation gives
$|(\psi_a)_x|^2=1,$
$
|(\psi_a)_y|^2=4+(a^2-3)\sin^2y
$ and
$\langle(\psi_a)_x,(\psi_a)_y\rangle=0.$
Then
\[
g_a
=
|\mathrm d\psi_a|^2 
=
\mathrm dx^2+\mathrm dw^2, \hbox{ where } w(y)
=
\int_0^y
\sqrt{4+(a^2-3)\sin^2u}\,\mathrm du.
\]
Hence $\psi_a(x,w)$ is isometric in the coordinates $(x,w)$. To determine the parameter $b(a)$ of $\psi_a(x,w)$, set
$
Y(a)
:=
\int_0^{2\pi}
\sqrt{4+(a^2-3)\sin^2u}\,\mathrm du.
$
It is easy to see that $\psi_a(x,w)$ is invariant under the following three transformations:
\[
\begin{split}
\tau_1:(x,w)&\longmapsto(x+4\pi,w),\qquad
\tau_2:(x,w) \longmapsto(x,w+Y(a)),\qquad
\tau_3:(x,w)\longmapsto(x+2\pi,-w).
\end{split}
\]
By (\ref{eq-tau}), we get
\[
b(a)
=
\frac{Y(a)}{4\pi}
=
\frac{1}{\pi}
\int_0^{\frac{\pi}{2}}
\sqrt{4+(a^2-3)\sin^2u}\,\mathrm du
\in
\left[
\frac{2}{\pi}E\left(\frac{\sqrt3}{2}\right),
+\infty
\right).
\]
Set 
$L_a=\sqrt{4+(a^2-3)\sin^2y}$. A direct computation shows the Willmore energy  $\psi_a$ is
\[
\mathcal W(\psi_a)
=
\int
\left|
\frac12\Delta_{g_a}(\psi_a)
\right|^2
\,\mathrm dA_{g_a}
=
2\pi
\int_0^{\frac{\pi}{2}}
\left[
\left(1-\frac34\sin^2y\right)L_a
+\frac{2}{L_a}
+\frac{4(a^2+1)}{L_a^5}
\right]
\,\mathrm dy.
\]
 
\begin{figure}[H]
    \centering
    \includegraphics[width=0.7\textwidth]{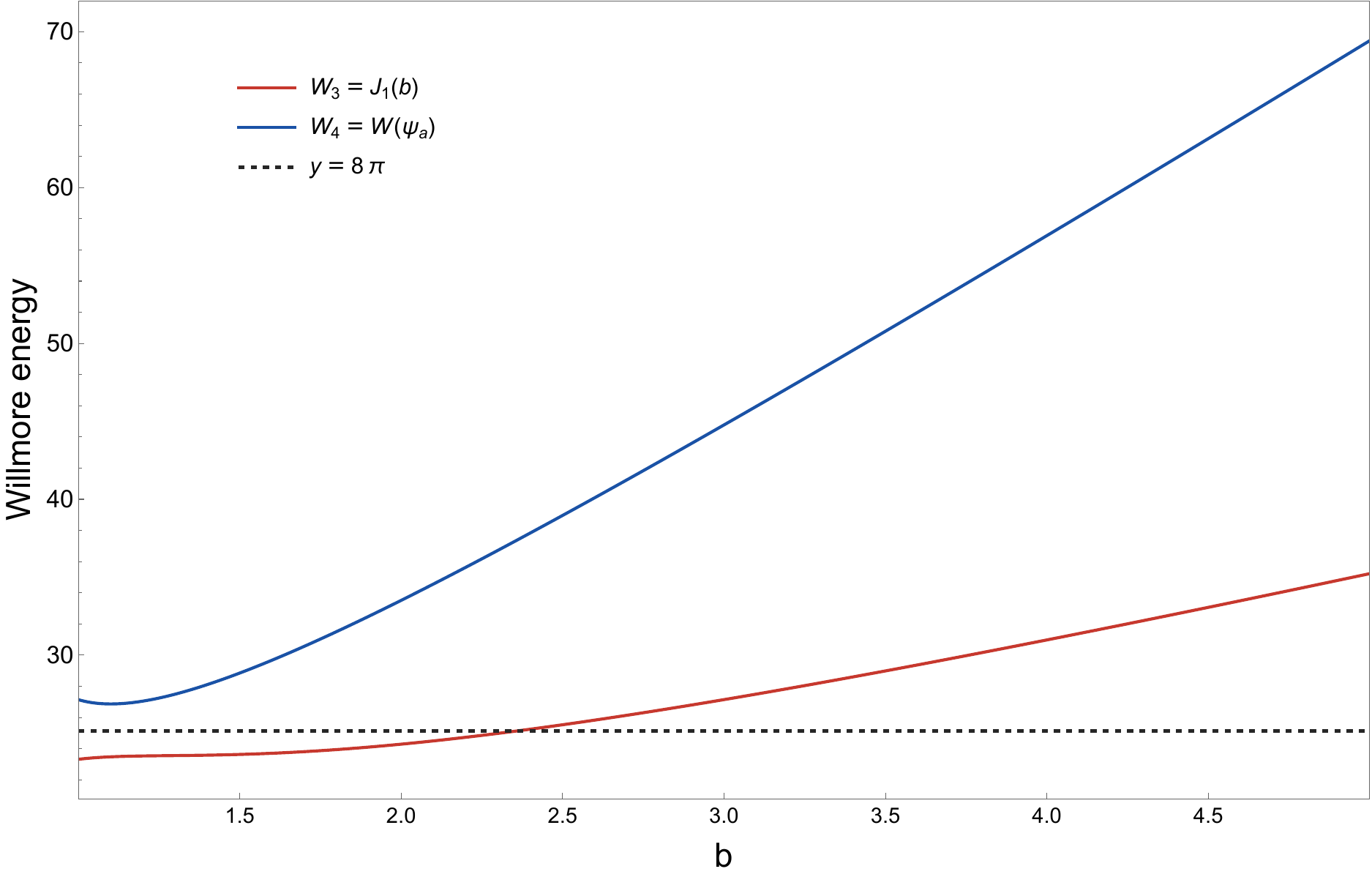} 
    \caption{The lower bound in Theorem~\ref{thm-iso}
vs. $\W(\psi_a)$ for $1<b\le5$.}
    \label{fig:willmore-0.8}
\end{figure}
\end{example}
\subsection{Construction using even and odd functions}
Motivated by the symmetries of the Klein bottle, we consider
\[
\widetilde{\psi}_b(x,y)
=
\frac{1}{2\pi}
\bigl(
U_b(y),
V_b(y)\cos 4\pi x,
V_b(y)\sin 4\pi x,
W_b(y)\cos 2\pi x,
W_b(y)\sin 2\pi x
\bigr).
\]
To satisfy the deck transformation $\tau_b$, we require
$U_b(y)$ and $V_b(y)$ to be even functions and $W_b(y)$ to be an
odd function. We choose the simplest odd function, that is,
$W_b(y):=C\sin\frac{2\pi}{b}y,
$ where $C$ is a constant.  For the functions $U_b$, $V_b$, and $W_b$, the isometry conditions then give
\[
1=\bigl|(\widetilde{\psi}_b)_x\bigr|^2=4V_b^2+W_b^2
,\qquad
1=\bigl|(\widetilde{\psi}_b)_y\bigr|^2
=
\frac{1}{4\pi^2}
\left[
(U_b')^2+(V_b')^2+(W_b')^2
\right].
\]
Solving these equations for $U_b$, $V_b$, and $W_b$ yields
the following example.

\begin{example}
For every $0<b<1$, the isometric immersion
$\widetilde{\psi}_b:\KB\to\mathbb{R}^5$ is defined by
\[
\widetilde{\psi}_b(x,y)
=
\frac{1}{2\pi}
\bigl(
U_b(y),
V_b(y)\cos 4\pi x,
V_b(y)\sin 4\pi x,
W_b(y)\cos 2\pi x,
W_b(y)\sin 2\pi x
\bigr),
\]
where $
U_b'(y)
=
\pi\sin\frac{2\pi}{b}y
\sqrt{
\frac{
4-b^2-3b^2\sin^2\frac{2\pi}{b}y
}{
1-b^2\sin^2\frac{2\pi}{b}y
}
}$ with 
$U_b(0)=0$,
$
V_b(y)
=
\frac12
\sqrt{1-b^2\sin^2\frac{2\pi}{b}y}$ and $W_b(y)=b\sin\frac{2\pi}{b}y$. 
A direct calculation gives 
\begin{equation}\label{eq-W-sharp}
    \begin{aligned}
\mathcal{W}(\widetilde{\psi}_b)
=
\frac18\int_0^1\int_0^b
\bigl|\Delta\widetilde{\psi}_b\bigr|^2\,dy\,dx
=
\frac{\pi^2}{4b^3}
\left[
-3b^6+8b^4+6b^2
+
\frac{
(4-b^2)^{\frac{3}{2}}+4b^4-b^2-8
}{
\sqrt{1-b^2}
}
\right].
\end{aligned}
\end{equation}
In particular,
$\mathcal{W}(\widetilde{\psi}_b)<8\pi$ when 
$b_7<b<b_8$ with $b_7\approx0.266$ and $b_8\approx0.906$.
\end{example}


\begin{figure}[H]
    \centering
    \includegraphics[width=0.7\textwidth]{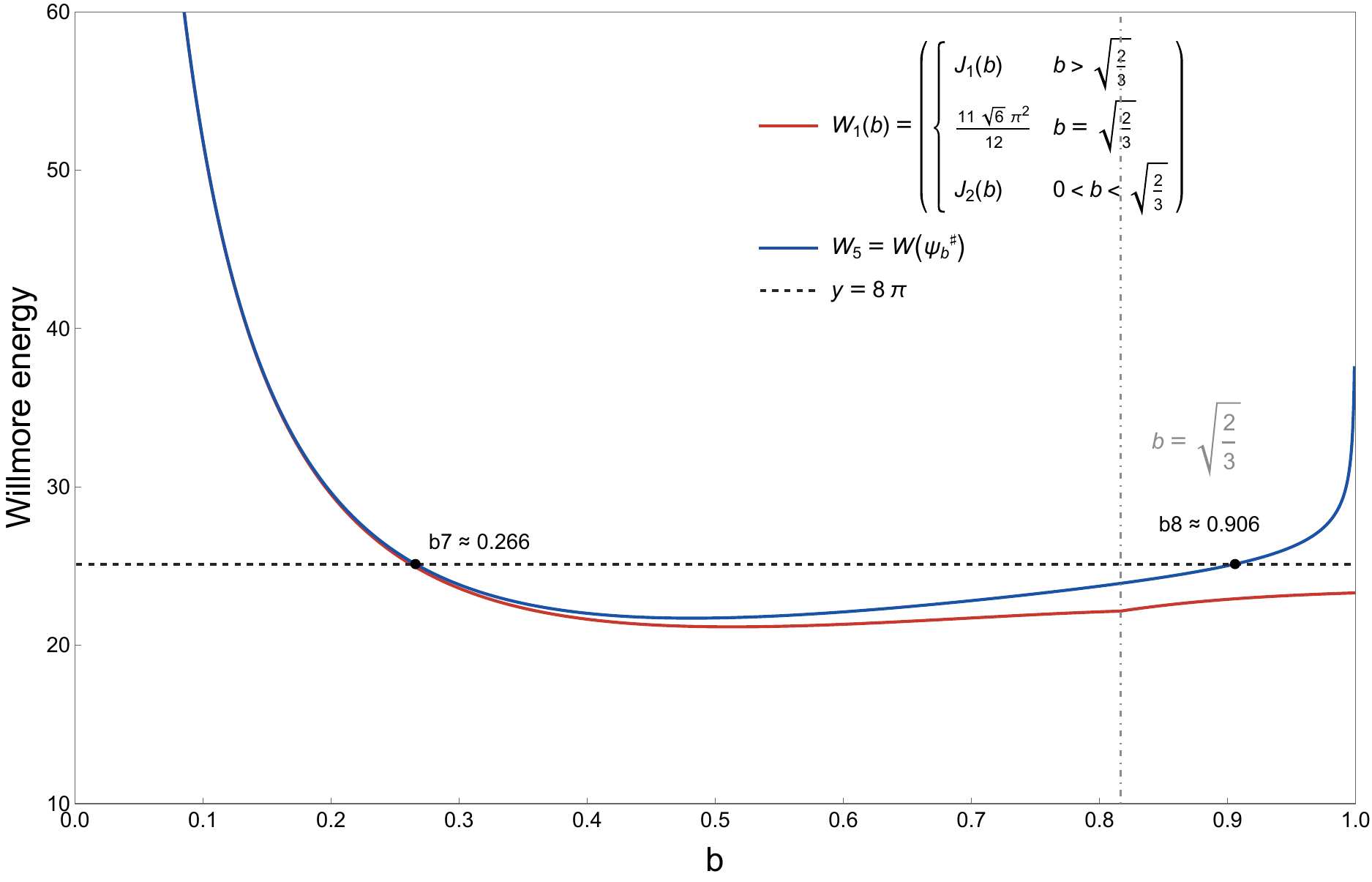} 
    \caption{The bound in Theorem~\ref{thm-iso}
vs. $\W(\widetilde{\psi}_b)$ for $0<b<1$.}
    \label{fig:willmore-0.8}
\end{figure}
Taking $b=\frac12$ in \eqref{eq-W-sharp}, we obtain $
\mathcal W(\widetilde{\psi}_{\frac{1}{2}})
=
\left(
\frac{125}{32}
+\frac{15\sqrt5}{2}
-\frac{32}{\sqrt3}
\right)\pi^2
\approx 2.201551\,\pi^2
<2.202\,\pi^2.$
Together with Theorem~\ref{thm H-B-M iso}, we obtain the following theorem.
\begin{theorem}
Let
\[
\beta_{\mathrm{flat}}
:=
\inf\left\{
\mathcal{W}(\psi)
\,\middle|\,
\psi:\KB\to\rn
\text{ is an isometric immersion}
\right\},
\]
where the infimum is taken over all $b>0$ and all ambient
dimensions $N$. Then
\[
2.202\pi^2
>
\beta_{\mathrm{flat}}
\geq
J_2(b_*)>2.136\,\pi^2,
\]
where $J_2(b)$ and $b_*$ are defined by \eqref{eq-J2} and \eqref{eq-b*},
respectively.
\end{theorem}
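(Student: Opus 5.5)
The statement has two halves, and I would prove them separately: an upper bound for $\beta_{\mathrm{flat}}$ from an explicit example, and a lower bound coming from Theorem \ref{thm-iso} together with the minimization behind Theorem \ref{thm H-B-M iso}.

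\emph{Upper bound.} Take $b=\tfrac12$ in the even/odd construction $\widetilde\psi_b:\mathbb K^2_{1/2}\to\R^5$. First I would check that it is a genuine smooth isometric immersion of the flat Klein bottle.
\begin{itemize}
\item $\tau_b$-invariance holds because $U_b$ and $V_b$ are even in $y$ and $W_b$ is odd.
\item The factor $\cos 2\pi x,\sin 2\pi x$ changes sign under $x\mapsto x+\tfrac12$, and this cancels the sign change of $W_b$; the $4\pi x$ terms are unchanged.
\item Periodicity in $y$ with period $b$ also has to be checked. $U_b'$ has zero mean over a period, since it is odd about $y=b/2$. Hence $U_b$ is $b$-periodic, and $V_b,W_b$ are clearly periodic.
\item Since $b<1$, the square roots stay positive, so everything is smooth.
\item The isometry equations $4V_b^2+W_b^2=1$ and $(U_b')^2+(V_b')^2+(W_b')^2=4\pi^2$ hold by construction, and $\langle\psi_x,\psi_y\rangle=0$ is automatic from the rotational structure. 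Therefore $\widetilde\psi_{1/2}$ is isometric.
\end{itemize}
Then I evaluate \eqref{eq-W-sharp} at $b=\tfrac12$. This gives $\mathcal W=\bigl(\tfrac{125}{32}+\tfrac{15\sqrt5}{2}-\tfrac{32}{\sqrt3}\bigr)\pi^2\approx2.2016\pi^2<2.202\pi^2$. Because $\beta_{\mathrm{flat}}$ is an infimum, it is at most this value, so $\beta_{\mathrm{flat}}<2.202\pi^2$.

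\emph{Lower bound.} Let $\psi:\KB\to\rn$ be any isometric immersion. By Theorem \ref{thm-iso}, $\mathcal W(\psi)$ is strictly greater than $J_1(b)$, $\tfrac{11}{12}\sqrt6\pi^2$ or $J_2(b)$, according to whether $b>\sqrt{2/3}$, $b=\sqrt{2/3}$ or $b<\sqrt{2/3}$. I then compare each case with $J_2(b_*)$.
\begin{itemize}
\item For $b>\sqrt{2/3}$: Lemma \ref{lemma-b-big} gives $J_1(b)>\tfrac{11}{12}\sqrt6\pi^2\approx2.245\pi^2$.
\item For $b=\sqrt{2/3}$: the bound is $\tfrac{11}{12}\sqrt6\pi^2\approx2.245\pi^2$ directly.
\item For $b<\sqrt{2/3}$: Lemmas \ref{lemma-b-small} and \ref{lem:J2-lower-bound} give $\inf J_2=J_2(b_*)$.
\end{itemize}
So every isometric immersion satisfies $\mathcal W(\psi)>\min\{J_2(b_*),\tfrac{11}{12}\sqrt6\pi^2\}$.

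The remaining comparison is $J_2(b_*)<\tfrac{11}{12}\sqrt6\pi^2$. This holds because $J_2(b_*)\le J_2(\tfrac12)$, and $J_2(\tfrac12)$ is at most the value $2.2016\pi^2$ of the example, since Theorem \ref{thm-iso} bounds that example from below. Hence $\beta_{\mathrm{flat}}\ge J_2(b_*)$.

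Finally, Lemma \ref{lem:J2-lower-bound} gives $J_2(b_*)>\tfrac{\pi^2}{2}\sqrt{5+4\sqrt{11}}\approx2.1367\pi^2>2.136\pi^2$.

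\emph{Main obstacle.} The delicate step is the numerical comparison of $J_2(b_*)$ with $\tfrac{11}{12}\sqrt6\pi^2$, together with confirming that $J_2$ attains its minimum at the interior critical point $b_*$. I would rely on the cited lemmas and the monotonicity facts stated there. The example computation itself is a routine but lengthy evaluation of $|\Delta\widetilde\psi_b|^2$, which I take from \eqref{eq-W-sharp}.
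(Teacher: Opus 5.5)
Your proposal is correct and follows the paper's proof. The upper bound comes from evaluating \eqref{eq-W-sharp} for $\widetilde\psi_{1/2}$, and the lower bound from Theorem~\ref{thm-iso} together with Lemmas~\ref{lemma-b-big}, \ref{lemma-b-small} and \ref{lem:J2-lower-bound}. Your explicit check that $J_2(b_*)<\frac{11}{12}\sqrt6\,\pi^2$, which the paper leaves implicit, is valid. It is also less delicate than you suggest: it follows directly from Lemma~\ref{lemma-b-small}, since $J_2$ increases on $(b_*,\sqrt{2/3})$ and $J_2(b)\to\frac{11}{12}\sqrt6\,\pi^2$ as $b\to\sqrt{2/3}^-$.
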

\begin{remark}
The function $\mathcal W(\widetilde{\psi}_b)$ has a unique global minimum
on $(0,1)$, numerically equal to approximately $2.200306\,\pi^2$.
Since this gives only a slight improvement over the choice $b=\frac{1}{2}$,
we omit the minimization argument.
\end{remark}

 \section{nonexistence of minimal isometric flat Klein bottle in $\sn$}\label{section8}

In the recent work of L\"{u}-Wang-Xie \cite[Corollary 2.8]{L-X-W}, they proved that the image of a $k$-dimensional homogeneous minimal torus in $\sn$ is a torus for any $k\geq2$. In particular, there exist no flat minimal Klein bottles in $\sn$. 
In this section, we give a different proof for the non-existence of flat minimal Klein bottle in $\sn$. 
    \begin{theorem}
        There exists no minimal isometric immersion from $(\KB,e^{2\rho}\dd s_0^2)$ into $\sn$ with $\rho\equiv const$ for any $b>0$.
    \end{theorem}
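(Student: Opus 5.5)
The plan is to show that the pointwise conditions on a minimal isometric immersion force a Fourier coefficient to vanish that the deck relation \eqref{eq:deck-relation} forces to be nonzero. Everything is worked with the complex Fourier expansion \eqref{eq:fourier-expansion} of the $\tau_b$-invariant lift $\psi:\TB\to\sn\subset\R^{N+1}$.

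\emph{Step 1: reduce to a single eigenvalue.} Suppose $\psi_K$ is a minimal isometric immersion of $(\KB,e^{2\rho}\dd s_0^2)$ with $\rho$ constant. Takahashi's equation then gives $\Delta_0\psi=-2e^{2\rho}\psi$, so every coordinate of $\psi$ is an eigenfunction of $\Delta_0$ with the single eigenvalue $\lambda=2e^{2\rho}$. By \eqref{eq-eigenvalue}, $C_{mn}\neq0$ only for lattice points $(m,n)$ on the ellipse
\[
E:=\Bigl\{(m,n)\in\Z^2 : 4\pi^2\Bigl(m^2+\tfrac{n^2}{b^2}\Bigr)=\lambda\Bigr\}.
\]
Since $\lambda>0$, the constant mode does not occur. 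The map must also satisfy $|\psi|^2\equiv1$ and the conformality relations; these are the constraints I will exploit.

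\emph{Step 2: pick an extremal mode.} Let $N:=\max\{|n| : C_{mn}\neq0\}$. If $N=0$, then $\psi$ depends on $x$ alone, so $\psi_y\equiv0$, contradicting that $\psi$ is an immersion. Hence $N\ge1$. Because all active modes lie on $E$, the active modes with $|n|=N$ are among $(\pm m_1,\pm N)$ for a single $m_1\ge0$. By \eqref{eq:deck-relation}, $C_{m,-N}=(-1)^mC_{mN}$, so some $C_{m_1N}\neq0$.

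\emph{Step 3: compute one Fourier coefficient of $|\psi|^2$.} Expanding
\[
|\psi|^2=\sum_{k,k'}C_k\cdot\overline{C_{k'}}\,e^{2\pi \mathrm i\langle k-k',(x,y/b)\rangle}
\]
(Hermitian pairing), I look at the frequency $(0,2N)$. A pair of active modes has $k-k'=(0,2N)$ only if $n=N$, $n'=-N$ and $m=m'$, by maximality of $N$. So the coefficient of $e^{4\pi \mathrm i Ny/b}$ is
\[
\sum_{m\in\{\pm m_1\}} C_{mN}\cdot\overline{C_{m,-N}}
=\sum_{m\in\{\pm m_1\}}(-1)^m|C_{mN}|^2 .
\]
Using $C_{-m,-n}=\overline{C_{mn}}$ together with the deck relation gives $|C_{-m_1,N}|=|C_{m_1,N}|$. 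Hence this coefficient equals $2(-1)^{m_1}|C_{m_1N}|^2$ when $m_1>0$, and $|C_{0N}|^2$ when $m_1=0$. In either case it is nonzero. But $|\psi|^2\equiv1$ forces every nonconstant Fourier coefficient to vanish, and $2N\neq0$ — a contradiction.

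\emph{Main obstacle.} The delicate point is the bookkeeping in Step 3. I must verify that no other pair of active modes contributes to frequency $(0,2N)$, and that the deck signs $(-1)^m$ do not produce a cancellation. The first point rests on maximality of $N$; the second holds because both $\pm m_1$ carry the same parity sign. I would also double-check that the conformal factor being constant is exactly what yields the single-eigenvalue reduction in Step 1. If that reduction needed more care, I would instead use the conformality identity $\langle\psi_z,\psi_z\rangle=0$ at the same extremal frequency as a backup.
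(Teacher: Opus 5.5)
Your proposal is correct and follows essentially the same route as the paper: Takahashi's equation puts all active modes on one lattice ellipse, so the modes with a given $|n|$ share a single $|m|$ and hence a single deck sign $(-1)^m$, and then a nonconstant $y$-frequency Fourier coefficient of $|\psi|^2\equiv1$ cannot vanish (this is exactly the paper's ``$p_i\neq r_j\Rightarrow q_i\neq s_j$'' step). The only difference is cosmetic: you test just the top frequency $(0,2N)$ in the complex expansion, whereas the paper averages $|X|^2$ over $x$ in the real basis and kills every $y$-dependent mode at once.
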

    \begin{proof}
     Suppose there exists $X:\KB \longrightarrow \mathbb{S}^{n}$ is a minimal flat immersion from $\left(\KB,e^{2\rho}\dd s^2_0\right)$ into $\sn$. Let $E_\lambda$ be the eigenspace of the Laplacian of $|\dd X|^2=e^{2\rho}\dd s_0^2$ corresponding to the eigenvalue $\lambda$. Then the coordinate functions $X_1\dots,X_{n+1}$ of $X$
are all contained in $E_{\lambda=2}$. On the other hand, by \eqref{eq:eigenfunctions} and \eqref{eq:fourier-expansion-map}, we can expand $X$ as follows:
$$X=\sum\limits_i(A_{p_iq_i}\bar{f}_{p_iq_i}+B_{p_iq_i}\bar{g}_{p_iq_i})+\sum\limits_j(A_{r_js_j}\bar{h}_{r_js_j}+B_{r_js_j}\bar{l}_{r_js_j}),$$
where $(p_i,q_i)\in S_1$, $(r_j,s_j)\in S_2$ and $A_{p_iq_i},B_{p_iq_i},A_{r_js_j},B_{r_js_j}\in \mathbb{R}^{n+1}$. Since $\rho=const$, we get
\begin{equation*}
    \Delta_\rho=e^{-2\rho}\Delta_0,
\end{equation*}
where $\Delta_{\rho}$ denotes the Laplace operator with respect to the metric $|\mathrm dX|^2$. Since $\{X_1,\cdots,X_{n+1}\}\subset E_{\lambda=2}$, it follows that 
\begin{equation}\label{eq eigenvalue}
    2e^{2\rho}=4\pi^2(p_i^2+\frac{q_i^2}{b^2})=4\pi^2(r_j^2+\frac{s_j^2}{b^2}),~~ \forall ~i,j.
\end{equation}
Moreover, since $p_i$ is even and $r_j$ is odd, we have $p_i\neq r_j$ for all $i$ and $j$. Hence, by \eqref{eq eigenvalue},
\begin{equation}\label{neq}
    q_i\neq s_j,~~\forall~ i,j.
\end{equation}
We also have \begin{align*}
    1=\int_0^1|X|^2\dd x&=\frac{1}{b}\sum_{p_i>0}a_{p_i0}+\frac{2}{b}\sum_{(p_i,q_i)\in S_1,q_i>0}\left(a_{p_iq_i}\cos^2\left(\frac{2\pi q_iy}{b}\right)\right)\\&\quad+\frac{2}{b}\sum_{(r_j,s_j)\in S_2}\left(a_{r_js_j}\sin^2\left(\frac{2\pi s_jy}{b}\right)\right)\\
    &=\frac{1}{b}\left(\sum_{p_i>0}a_{p_i0}+\sum_{(p_i,q_i)\in S_1,q_i>0}a_{p_iq_i}+\sum_{(r_j,s_j)\in S_2}a_{r_js_j}\right)\\
    &\quad+\frac{1}{b}\sum_{(p_i,q_i)\in S_1,q_i>0}\left(a_{p_iq_i}\cos\left(\frac{4\pi q_iy}{b}\right)\right) -\frac{1}{b}\sum_{(r_j,s_j)\in S_2}\left(a_{r_js_j}\cos\left(\frac{4\pi s_jy}{b}\right)\right),
\end{align*}
It follows from (\ref{neq}) that $a_{p_iq_i} = a_{r_js_j} = 0$ whenever $q_i\neq 0$ and $s_j\neq0$. Hence, we get  
$X=A_{p_i0} \bar{f}_{p_i0}+B_{p_i0}\bar{g}_{p_i0}.$ In particular, $|X_y|^2\equiv0$, contrary to the isometric condition of $X$.
    \end{proof}

\appendix
\section{}\label{section A}
In the Appendix, we provide proofs of the technical lemmas in Section \ref{section4} and \ref{section5}.
\subsection{Auxiliary Lemmas in Section \ref{section4}}
\begin{lemma}\label{lemma-id-u}
We retain the notion in Section \ref{section4}. The following identity holds:
    \begin{equation}
\frac {\mathfrak u}{2}
+(4b^2+3)\operatorname{Re}
\bigl(C_{2,0}\overline{C_{2,-2}}\bigr)
+\mathcal J=0,
\label{eq:lemma34-decomposition}
\end{equation}
where $
\mathcal J
:=\sum_{(m,n)\in\widetilde\Sigma}
Q_{mn}C_{mn}\overline{C_{m,n-2}}$ with $Q_{mn}=\frac{b^2m^2-(n-1)^2+4}{4}$ and $$\widetilde\Sigma
:=
\widehat\Sigma
\setminus
\left(
\{(0,3),(\pm2,2),(0,2)\}
\cup
\{(m,2):m\in2\mathbb Z+1\}
\right).$$
\end{lemma}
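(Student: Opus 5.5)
The identity is the vanishing of the $(0,2)$-Fourier coefficient of two pointwise identities, $|\psi|^2\equiv1$ and the conformality relation, combined so that the resulting weight is exactly $4Q_{mn}$. The terms involving the low modes $(0,\pm1)$ and $(\pm2,0)$ are then split off by hand.

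\textbf{Step 1: two quadratic identities.} Using the complex expansion \eqref{eq:fourier-expansion}, the coefficient of $\mathrm e^{2\pi\mathrm i\cdot 2y/b}$ in a product $\langle F,G\rangle$ of two such series is $\sum_{(m,n)}F_{mn}\cdot\overline{G_{m,n-2}}$, in the notation $\langle\cdot,\cdot\rangle_{\mathbb C}$ fixed in Section 2. Since $|\psi|^2\equiv1$ is constant, its $(0,2)$-coefficient vanishes:
\[
\sum_{(m,n)}C_{mn}\overline{C_{m,n-2}}=0 .
\]
Conformality gives $|\psi_x|^2-|\psi_y|^2\equiv0$. Using the derivative formulas from the proof of Proposition \ref{prop:fourier-lower-bound}, the $(0,2)$-coefficient of $|\psi_x|^2$ is $4\pi^2\sum m^2C_{mn}\overline{C_{m,n-2}}$, and that of $|\psi_y|^2$ is $\frac{4\pi^2}{b^2}\sum n(n-2)C_{mn}\overline{C_{m,n-2}}$. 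Hence
\[
\sum_{(m,n)}\bigl(b^2m^2-n(n-2)\bigr)C_{mn}\overline{C_{m,n-2}}=0 .
\]
Note that $b^2m^2-n(n-2)=b^2m^2-(n-1)^2+1$. Adding three times the first identity therefore gives
\[
\sum_{(m,n)}Q_{mn}C_{mn}\overline{C_{m,n-2}}=0,\qquad Q_{mn}=\tfrac{b^2m^2-(n-1)^2+4}{4}.
\]

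\textbf{Step 2: isolate the special terms.} I will use three facts: $C_{-m,-n}=\overline{C_{mn}}$, the deck relation \eqref{eq:deck-relation}, and $C_{00}=0$.

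The pair $(0,1)$ contributes $Q_{01}=1$ times $C_{01}\overline{C_{0,-1}}$. The deck relation gives $C_{0,-1}=C_{01}$, and reality gives $C_{0,-1}=\overline{C_{01}}$, so $C_{01}$ is a real vector. The contribution is therefore $|C_{01}|^2=\mathfrak u/2$.

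The pairs $(2,2)$, $(2,0)$, $(-2,2)$ and $(-2,0)$ all have weight $Q=(4b^2+3)/4$. By the deck relation, $C_{22}\overline{C_{20}}=C_{2,-2}\overline{C_{20}}$, which is the conjugate of the $(2,0)$ term. So the $m=2$ terms sum to $2\operatorname{Re}(C_{20}\overline{C_{2,-2}})$. The $m=-2$ terms are the complex conjugates of these and give the same real part. Altogether these four terms contribute $(4b^2+3)\operatorname{Re}(C_{20}\overline{C_{2,-2}})$.

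\textbf{Step 3: discard the remaining excluded indices.} Each of the following terms vanishes:
\begin{itemize}
\item the $(0,2)$ term, because it contains $\overline{C_{00}}=0$;
\item the terms $(m,2)$ with $m$ odd, because $C_{m0}=0$ for odd $m$;
\item the terms $(0,3)$ and $(0,-1)$, because $Q_{03}=Q_{0,-1}=0$.
\end{itemize}
Every remaining index $(m,n)$ is exactly what $\widetilde\Sigma$ retains, so the rest of the sum is $\mathcal J$. This gives \eqref{eq:lemma34-decomposition}.

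\textbf{Main obstacle.} The delicate part is the bookkeeping in Steps 2 and 3. For each index removed from $\widehat\Sigma$, one must check which of $C_{mn}$ and $C_{m,n-2}$ is a low mode or vanishes, so that nothing is double counted and no nonzero term is dropped. I would settle this by listing all $(m,n)$ with $(m,n)$ or $(m,n-2)$ in $\{(0,\pm1),(\pm2,0),(0,0)\}\cup\{(\text{odd},0)\}$ and checking each one against the definition of $\widetilde\Sigma$.
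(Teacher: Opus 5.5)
Your proposal is correct and is essentially the paper's proof: the paper likewise takes the $(0,2)$ Fourier coefficient of $\frac34(|\psi|^2-1)+\frac{b^2}{16\pi^2}(|\psi_x|^2-|\psi_y|^2)\equiv0$ to get $\sum Q_{mn}C_{mn}\overline{C_{m,n-2}}=0$, and then removes the same exceptional indices for the same reasons. Your explicit pairing of the four $(\pm2,0),(\pm2,2)$ terms into $4\operatorname{Re}(C_{2,0}\overline{C_{2,-2}})$, using the deck and reality relations, supplies a step that the paper leaves implicit.
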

\begin{proof}
    Define
\[
F:=\frac34(|\psi|^2-1)
+\frac{b^2}{16\pi^2}(|\psi_x|^2-|\psi_y|^2),
\qquad
[F]_{mn}:=\frac1b\int_{\T_b^2}
F\mathrm e^{-2\pi \mathrm i\left(mx+\frac{ny}{b}\right)}\,\mathrm dM_0.
\]
Clearly $F\equiv0$, so $[F]_{02}=0$. Using the computations in Proposition \ref{prop:fourier-lower-bound}, we obtain
\begin{equation}\label{eq:lemma34-fourier-identity}
\begin{split}
0
&=\frac34[|\psi|^2]_{02}
+\frac{b^2}{16\pi^2}
\left([|\psi_x|^2]_{02}-[|\psi_y|^2]_{02}\right)\\
&=\frac34\sum_{(m,n)\in\mathbb Z^*}
C_{mn} C_{-m,2-n}
\\&\qquad+\frac{b^2}{16\pi^2}
\left(
\sum_{(m,n)\in\mathbb Z^*}
4\pi^2m^2C_{mn} C_{-m,2-n}
-\sum_{(m,n)\in\mathbb Z^*}
\frac{4\pi^2}{b^2}n(n-2)C_{mn} C_{-m,2-n}
\right)\\
&=\frac34\sum_{(m,n)\in\mathbb Z^*}
C_{mn}\overline{C_{m,n-2}}
+\frac14\sum_{(m,n)\in\mathbb Z^*}
\bigl(b^2m^2-n(n-2)\bigr)
C_{mn}\overline{C_{m,n-2}}\\
&=\sum_{(m,n)\in\mathbb Z^*}
\frac{b^2m^2-(n-1)^2+4}{4}
C_{mn}\overline{C_{m,n-2}}\\
&=\sum_{(m,n)\in\mathbb Z^*}
Q_{mn}C_{mn}\overline{C_{m,n-2}}.
\end{split}
\end{equation}

Since the estimate in Proposition \ref{lem:u-upper} requires Lemma \ref{lem:A1}, we separate the terms in \eqref{eq:lemma34-fourier-identity} for which $w_{mn}\leq0$ or $w_{m,n-2}\leq0$. By Proposition \ref{prop:fourier-lower-bound}, $w_{mn}=0$ if and only if $ (m,n)=(0,\pm1)\ \text{or}\ (\pm2,0)$.
While $w_{m,n-2}=0$ if and only if $(m,n)=(0,3),\ (0,1),\ \text{or}\ (\pm2,2)$. We now compute the corresponding terms in \eqref{eq:lemma34-fourier-identity}. 
\begin{itemize}
    \item For $(m,n)=(0,1)$, \eqref{eq:deck-relation} gives
$C_{0,-1}=(-1)^0C_{01}=C_{01}$, and hence
$
Q_{01}C_{01}\overline{C_{0,-1}}
=|C_{01}|^2=\frac {\mathfrak u}{2}.
$
\item For $(m,n)=(0,-1)$ or $(0,3)$, we get
$Q_{0,-1}=Q_{03}=0.$
\item For $(m,n)=(\pm2,0)$ or $(\pm2,2)$, we obtain
$
Q_{\pm2,0}=Q_{\pm2,2}=\frac{4b^2+3}{4}.
$
\end{itemize}

On the other hand, $w_{mn}<0$ if and only if $(m,n)=(0,0)$ or $(\pm1,0)$. In either case,
$C_{00}=C_{\pm1,0}=0$.
Similarly, $w_{m,n-2}<0$ if and only if  $(m,n)=(0,2)$ or $(\pm1,2)$, and therefore
$\overline{C_{00}}=\overline{C_{\pm1,0}}=0$.

Moreover, if $(m,n)=(2j+1,2)$ for some $j\in\mathbb Z$, then the symmetry relation for the Klein bottle gives
$\overline{C_{2j+1,0}}=0$.
Thus, these terms do not contribute to \eqref{eq:lemma34-fourier-identity}. 
Hence, \eqref{eq:lemma34-fourier-identity} can be rewritten as (\ref{eq:lemma34-decomposition}).
\end{proof}

\begin{lemma}\label{lem:A1}
For every $(m,n)\in\widetilde\Sigma$, we have
\begin{equation}
|Q_{mn}|
\le\frac{4+b^2}{4}\sqrt{w_{mn}w_{m,n-2}}.
\label{eq:A1}
\end{equation}
Then it follows that
\begin{equation}
|\mathcal J|\le\frac{4+b^2}{4}R.
\label{eq:A2}
\end{equation}
\end{lemma}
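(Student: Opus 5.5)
The plan is to prove the pointwise inequality \eqref{eq:A1} by an elementary estimate on integers, and then to obtain \eqref{eq:A2} from it by Cauchy--Schwarz.

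\emph{Pointwise bound.} Put $k:=n-1$. Then
\[
4Q_{mn}=b^2m^2-k^2+4,\qquad w_{mn}=m^2+4k^2+8k,\qquad w_{m,n-2}=m^2+4k^2-8k,
\]
so that
\[
P:=w_{mn}w_{m,n-2}=(m^2+4k^2)^2-64k^2=m^4+8m^2k^2+16k^4-64k^2 .
\]
On $\widetilde\Sigma$ every pair with $w_{mn}\le 0$ or $w_{m,n-2}\le 0$ has been removed in Lemma \ref{lemma-id-u}. Both weights are integers, so $w_{mn}\ge1$, $w_{m,n-2}\ge1$ and $P\ge1$. First I bound the numerator crudely:
\[
|b^2m^2-k^2+4|\le b^2m^2+(k^2+4)\le (b^2+4)\max\Bigl(m^2,\tfrac{k^2}{4}+1\Bigr).
\]
It therefore suffices to show $\sqrt P\ge m^2$ and $\sqrt P\ge \frac{k^2}{4}+1$ on $\widetilde\Sigma$.
\begin{itemize}
\item For $\sqrt P\ge m^2$: this is equivalent to $8k^2(m^2+2k^2-8)\ge0$. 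It holds trivially when $k=0$, and when $k\neq0$ it holds once $m^2+2k^2\ge8$.
\item For $\sqrt P\ge \frac{k^2}{4}+1$: since $P\ge 16k^4-64k^2$, this holds for all $|k|\ge3$. For small $|k|$ it uses the $m^2$ terms.
\end{itemize}
What remains is a finite list with $|k|\le2$ and small $|m|$. There I check \eqref{eq:A1} directly, using $P\ge1$ and the fact that $b^2m^2\le b^2m^2\sqrt P$.

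The case $k=0$ (i.e.\ $n=1$, $m\neq0$) is especially clean. There $P=m^4$ and $4|Q_{m1}|=b^2m^2+4\le(b^2+4)m^2$.

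I expect this finite case check to be the main obstacle. The excluded set $\{(0,3),(\pm2,2),(0,2),(2j+1,2)\}$ together with $\widehat\Sigma$ has to be matched carefully against the pairs where a weight is $\le0$. I will enumerate $k\in\{\pm1,\pm2\}$ with $|m|\le2$ explicitly.

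\emph{Summation.} Using \eqref{eq:A1} and $2\sqrt{xy}\le x+y$, I get
\[
|\mathcal J|\le\frac{4+b^2}{4}\sum_{(m,n)\in\widetilde\Sigma}\sqrt{w_{mn}|C_{mn}|^2}\sqrt{w_{m,n-2}|C_{m,n-2}|^2}
\le\frac{4+b^2}{8}\Bigl(\sum_{\widetilde\Sigma}w_{mn}|C_{mn}|^2+\sum_{\widetilde\Sigma}w_{m,n-2}|C_{m,n-2}|^2\Bigr).
\]
The map $(m,n)\mapsto(m,n-2)$ is injective. Every weight appearing in either sum is positive, and every coefficient with index outside $\mathbb Z^*$ vanishes ($C_{00}=0$ and $C_{2j+1,0}=0$). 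Since all terms of $R$ are nonnegative, each of the two sums is at most $R$. This gives $|\mathcal J|\le\frac{4+b^2}{4}R$, which is \eqref{eq:A2}.
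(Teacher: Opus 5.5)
Your proof is correct. The summation step matches the paper's, except that you use the termwise bound $2\sqrt{xy}\le x+y$ instead of a global Cauchy--Schwarz. Both give each partial sum $\le R$. Your justification that the shifted sum is a sub-sum of $R$ (the shift $(m,n)\mapsto(m,n-2)$ is injective and all terms are nonnegative) is spelled out more carefully than in the paper.

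The real difference is in the pointwise bound \eqref{eq:A1}.

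\emph{The paper's route.} It expands $G=(4+b^2)^2w_{mn}w_{m,n-2}-(4Q_{mn})^2$ as a quadratic in $m^2$ with explicit $b$-dependent coefficients. It then checks signs in three ranges: $n=1$; $n\in\{0,2\}$, using $m^2\ge16$ there; and $|n-1|\ge2$.

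\emph{Your route.} You use $|4Q_{mn}|\le b^2m^2+(k^2+4)\le(b^2+4)\max\bigl(m^2,\frac{k^2}{4}+1\bigr)$. This splits the constant $4+b^2$ so that the remaining comparisons $\sqrt P\ge m^2$ and $\sqrt P\ge\frac{k^2}{4}+1$ no longer involve $b$. It avoids the polynomial bookkeeping and shows where the constant $\frac{4+b^2}{4}$ comes from. The triangle inequality is lossy in general, but here it costs nothing.

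\emph{The finite check.} The finite check you announce is actually vacuous. The only pairs violating one of your two inequalities are $(0,1)$, then $(m,0)$ and $(m,2)$ with $|m|\le2$, and finally $(0,3)$ and $(0,-1)$. Each of these has a nonpositive weight, so all lie outside $\widetilde\Sigma$. The remark about $P\ge1$ and $b^2m^2\le b^2m^2\sqrt P$ is therefore unnecessary. For $|k|=1$, the pairs that do lie in $\widetilde\Sigma$ have $m$ even with $|m|\ge4$, which is exactly the fact the paper uses in its case $n\in\{0,2\}$.
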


\begin{proof}
For $(m,n)\in\widetilde\Sigma$, we have
$w_{mn}>0$ and $w_{m,n-2}>0$. By the definitions of $Q_{mn}$ and
$w_{mn}$, it is enough to prove that
$
\bigl(b^2m^2-(n-1)^2+4\bigr)^2
\le
(4+b^2)^2
\bigl[m^2+4(n^2-1)\bigr]
\bigl[m^2+4((n-2)^2-1)\bigr].
$
Set
\[
\begin{split}
G(m,n,b)
:={}&
(4+b^2)^2
\bigl[m^2+4(n^2-1)\bigr]
\bigl[m^2+4((n-2)^2-1)\bigr]-\bigl(b^2m^2-(n-1)^2+4\bigr)^2.
\end{split}
\]
It remains to show that $G(m,n,b)\ge0$ for every $b>0$ and every
$(m,n)\in\widetilde\Sigma$. We consider the following three cases.
\begin{enumerate}
    \item $n=1$:
Since $m^2\ge1$, it follows that
\[
\begin{split}
G(m,1,b)
&=(4+b^2)^2m^4-(b^2m^2+4)^2=8(m^2-1)(b^2m^2+2m^2+2)\ge0.
\end{split}
\]
\item
$n=0$ or $n=2$: In this case $m^2-16\ge0$, so we have
\[
\begin{split}
G(m,0,b)=G(m,2,b)
&=(4+b^2)^2(m^2-4)(m^2+12)
-(b^2m^2+3)^2\\
&>(4+b^2)^2(m^2+2)^2
-(b^2m^2+3)^2>(b^2m^2+8)^2
-(b^2m^2+3)^2\\
&>0.
\end{split}
\]
\item
$(n-1)^2\ge4$: We have $G(m,n,b)=G_4(n,b)m^4+G_2(n,b)m^2+G_0(n,b)$ with
$G_4(n,b)=8b^2+16>0$, $G_2(n,b)=8b^4(n-1)^2
+(66(n-1)^2-8)b^2
+128(n-1)^2\geq0$, and 
$G_0(n,b)=16b^2(b^2+8)(n-1)^2\bigl((n-1)^2-4\bigr)+\bigl((n-1)^2-4\bigr)
\bigl(255(n-1)^2+4\bigr)
\ge0$.
\end{enumerate}

Thus $G(m,n,b)\ge0$ for every $(m,n)\in\widetilde\Sigma$, and
\eqref{eq:A1} follows.

Moreover, using \eqref{eq:A1}, we obtain
\[
\begin{split}
|\mathcal J|
&\le
\sum_{(m,n)\in\widetilde\Sigma}
|Q_{mn}|\,
|C_{mn}\overline{C_{m,n-2}}|\le
\frac{4+b^2}{4}
\sum_{(m,n)\in\widetilde\Sigma}
\sqrt{w_{mn}}\,|C_{mn}|
\sqrt{w_{m,n-2}}\,|C_{m,n-2}|.
\end{split}
\]
By the Cauchy--Schwarz inequality, we get
\[
\begin{split}
|\mathcal J|
&\le
\frac{4+b^2}{4}
\left(
\sum_{(m,n)\in\widetilde\Sigma}
w_{mn}|C_{mn}|^2
\right)^{1/2}
\left(
\sum_{(m,n)\in\widetilde\Sigma}
w_{m,n-2}|C_{m,n-2}|^2
\right)^{1/2}\le\frac{4+b^2}{4}R.
\end{split}
\]
This proves \eqref{eq:A2}.
\end{proof}
\subsection{Proof of Lemma \ref{lem-piecewise-lower-bound}}\label{appendix-proof}
\begin{proof}[Proof of Lemma \ref{lem-piecewise-lower-bound}]
   \textbf{Case 1.} When $b>\sqrt{\frac{2}{3}}$, we estimate $D$ from below.   
   Set
$
\widetilde S_1
=
S_1\setminus\{(p,q)\in S_1\mid p=0\}$. For $(p,q)\in\widetilde S_1$, we have
\begin{equation*}
\begin{split}
\overline f_{p0}
&=
\sqrt{\frac{2}{b}}\cos(2\pi px),
\qquad
\overline g_{p0}
=
\sqrt{\frac{2}{b}}\sin(2\pi px),\\
\overline f_{pq}
&=
\frac{2}{\sqrt b}
\cos(2\pi px)
\cos\left(\frac{2\pi qy}{b}\right),
\qquad
\overline g_{pq}
=
\frac{2}{\sqrt b}
\sin(2\pi px)
\cos\left(\frac{2\pi qy}{b}\right).
\end{split}
\end{equation*}
Set
\begin{equation*}
v_q
=
\begin{cases}
\displaystyle \sqrt{\frac{2}{b}}, & q=0,\\[4pt]
\displaystyle \frac{2}{\sqrt b}, & q\ge1.
\end{cases} 
\end{equation*}
Differentiating $\psi$ with respect to $x$ and setting $y=0$, we obtain
\begin{equation}
\psi_x(x,0)
=
2\pi
\sum_{(p,q)\in S_1}
pv_q
\left[
-A_{pq}\sin(2\pi px)
+
B_{pq}\cos(2\pi px)
\right].
\label{eq:psix-y0}
\end{equation}

Since $\psi$ is isometric, it follows from \eqref{eq:psix-y0} that
$\widetilde S_1\cap\mathcal A^*(\psi)\neq\varnothing$.
For $p\in2\mathbb Z^+$, define
\begin{equation}
\begin{split}
A_p^e
=
\sum_{q\in2\mathbb N}v_qA_{pq},\qquad
A_p^o
=
\sum_{q\in2\mathbb N+1}v_qA_{pq},\qquad
B_p^e
=
\sum_{q\in2\mathbb N}v_qB_{pq},\qquad
B_p^o
=
\sum_{q\in2\mathbb N+1}v_qB_{pq}.
\end{split}
\label{eq:Ape-definition}
\end{equation}
Substituting \eqref{eq:Ape-definition} into \eqref{eq:psix-y0} gives
\begin{equation*}
\psi_x(x,0)
=
2\pi
\sum_{p\in2\mathbb Z^+}
p
\left[
-(A_p^e+A_p^o)\sin(2\pi px)
+
(B_p^e+B_p^o)\cos(2\pi px)
\right].
\end{equation*}

On the other hand, differentiating $\psi$ with respect to $x$ and setting $y=b/2$, we obtain
\begin{equation*}
\begin{split}
\psi_x\left(x,\frac b2\right)
&=
2\pi
\sum_{(p,q)\in S_1}
pv_q
\left[
-A_{pq}\sin(2\pi px)\cos(\pi q)
+
B_{pq}\cos(2\pi px)\cos(\pi q)
\right]\\
&=
2\pi
\sum_{(p,q)\in S_1}
pv_q
\left[
-(-1)^qA_{pq}\sin(2\pi px)
+
(-1)^qB_{pq}\cos(2\pi px)
\right].
\end{split}
\end{equation*}
Together with \eqref{eq:Ape-definition}, this yields
\begin{equation*}
\psi_x\left(x,\frac b2\right)
=
2\pi
\sum_{p\in2\mathbb Z^+}
p
\left[
-(A_p^e-A_p^o)\sin(2\pi px)
+
(B_p^e-B_p^o)\cos(2\pi px)
\right].
\end{equation*}
Since $\psi$ is isometric,
\begin{equation}
1
=
\int_0^1|\psi_x(x,0)|^2\,dx
=
2\pi^2
\sum_{p\in2\mathbb Z^+}
p^2
\left(
|A_p^e+A_p^o|^2
+
|B_p^e+B_p^o|^2
\right),
\label{eq:isometric-x-0}
\end{equation}
and
\begin{equation}
1
=
\int_0^1
\left|
\psi_x\left(x,\frac b2\right)
\right|^2\,dx
=
2\pi^2
\sum_{p\in2\mathbb Z^+}
p^2
\left(
|A_p^e-A_p^o|^2
+
|B_p^e-B_p^o|^2
\right).
\label{eq:isometric-x-b2}
\end{equation}
Adding \eqref{eq:isometric-x-0} and \eqref{eq:isometric-x-b2}, we obtain
(\ref{eq:sum-Ap}).

If $(p,q)\in\widetilde S_1$, then $p^2\ge4$ and $q^4-q^2\ge0$. Hence, by \eqref{eq:d-positive},
\begin{equation*}
\frac{d_{pq}}{p^2}
=
p^2-1-\frac{2}{b^2}
+\frac{2q^2}{b^2}
+
\frac{q^4-q^2}{b^4p^2}
\ge
3-\frac{2}{b^2}+\frac{2q^2}{b^2}.
\end{equation*}
Therefore,
\begin{equation}
d_{pq}
\ge
p^2
\left(
3-\frac{2}{b^2}+\frac{2q^2}{b^2}
\right),
\qquad
\forall (p,q)\in\widetilde S_1.
\label{eq:d-pq-lower}
\end{equation}
Set $
\alpha_q
=
3-\frac{2}{b^2}+\frac{2q^2}{b^2}
$.Since $b>\sqrt{\frac{2}{3}}$, we have $\alpha_q>0$.

Now, we first estimate $|A_p^e|^2$ and $|B_p^e|^2$. By the Cauchy-Schwarz inequality,
\begin{equation}
|A_p^e|^2
=
\left|
\sum_{q\in2\mathbb N}v_qA_{pq}
\right|^2
\le
\left(
\sum_{q\in2\mathbb N}\frac{v_q^2}{\alpha_q}
\right)
\left(
\sum_{q\in2\mathbb N}\alpha_q|A_{pq}|^2
\right).
\label{eq:CS-even-A}
\end{equation}
Multiplying both sides by $p^2$ and using \eqref{eq:d-pq-lower}, we obtain
\begin{equation}
p^2|A_p^e|^2
\le
C^e(b)
\sum_{q\in2\mathbb N}
d_{pq}|A_{pq}|^2,
\label{eq:even-A}
\end{equation}
where $
C^e(b)
=
\sum_{q\in2\mathbb N}\frac{v_q^2}{\alpha_q}$. Similarly,
\begin{equation}
p^2|B_p^e|^2
\le
C^e(b)
\sum_{q\in2\mathbb N}
d_{pq}|B_{pq}|^2.
\label{eq:even-B}
\end{equation}
Using the Mittag-Leffler expansion, we obtain
\begin{equation}
C^e(b)
=
\frac{2}{b}
\sum_{q\in2\mathbb Z}
\frac{1}{
3-\frac{2}{b^2}+\frac{2q^2}{b^2}
}
=
\frac{\pi b}{
2\DD
}
\coth\left(
\frac{\pi\DD}{2}
\right),
\label{eq:Ce}
\end{equation}
with $\DD:=\sqrt{\frac{3b^2-2}{2}}$.

We next estimate $|A_p^o|^2$ and $|B_p^o|^2$. Again by the Cauchy-Schwarz inequality,
\begin{equation}
\begin{split}
p^2|A_p^o|^2
&\le
p^2
\left(
\sum_{q\in2\mathbb N+1}
\frac{v_q^2}{\alpha_q}
\right)
\left(
\sum_{q\in2\mathbb N+1}
\alpha_q|A_{pq}|^2
\right)\le
C^o(b)
\sum_{q\in2\mathbb N+1}
d_{pq}|A_{pq}|^2,
\end{split}
\label{eq:odd-A}
\end{equation}
where $C^o(b)
=
\sum_{q\in2\mathbb N+1}
\frac{v_q^2}{\alpha_q}
$. Similarly,
\begin{equation}
p^2|B_p^o|^2
\le
C^o(b)
\sum_{q\in2\mathbb N+1}
d_{pq}|B_{pq}|^2.
\label{eq:odd-B}
\end{equation}
Combining this with \eqref{eq:Ce}, a direct computation gives
\begin{equation*}
\begin{split}
C^o(b)
&=
\frac{2}{b}
\sum_{q\in\mathbb Z}
\frac{1}{
3-\frac{2}{b^2}+\frac{2q^2}{b^2}
}
-
\frac{2}{b}
\sum_{q\in2\mathbb Z}
\frac{1}{
3-\frac{2}{b^2}+\frac{2q^2}{b^2}
}\\
&=
b
\sum_{q\in\mathbb Z}
\frac{1}{
\DD^2+q^2
}
-
\frac{\pi b}{
2\DD
}
\coth\left(
\frac{\pi\DD}{2}
\right)\\
&=
\frac{\pi b}{
\DD
}
\coth\left(
\pi\DD
\right)
-
\frac{\pi b}{
2\DD
}
\coth\left(
\frac{\pi\DD}{2}
\right)\\
&=
\frac{\pi b}{
2\DD
}
\tanh\left(
\frac{\pi\DD}{2}
\right),
\end{split}
\end{equation*}
Since $\DD>0$, it follows that
\begin{equation}
\begin{split}
C^e(b)-C^o(b)
&=
\frac{\pi b}{
2\DD
}
\left[
\coth\left(
\frac{\pi\DD}{2}
\right)
-
\tanh\left(
\frac{\pi\DD}{2}
\right)
\right]>0.
\end{split}
\label{eq:Ce-Co}
\end{equation}

Since $\widetilde S_1\cap\mathcal A^*(\psi)\neq\varnothing$, there exist $p\in2\mathbb Z^+$ and $q\in\mathbb{N}$ such that $a_{pq}\neq0$. Combining
\eqref{eq:sum-Ap}, \eqref{eq:even-A}, \eqref{eq:even-B},
\eqref{eq:odd-A}, \eqref{eq:odd-B}, and \eqref{eq:Ce-Co}, we obtain
\begin{equation}
\begin{split}
D
&\ge
\sum_{(p,q)\in S_1^*}d_{pq}a_{pq}\ge
\frac{1}{C^e(b)}
\sum_{p\in2\mathbb Z^+}
p^2
\left(
|A_p^e|^2+|B_p^e|^2
\right)
+
\frac{1}{C^o(b)}
\sum_{p\in2\mathbb Z^+}
p^2
\left(
|A_p^o|^2+|B_p^o|^2
\right)\\
&\ge
\frac{1}{C^e(b)}
\sum_{p\in2\mathbb Z^+}
p^2
\left(
|A_p^e|^2
+
|A_p^o|^2
+
|B_p^e|^2
+
|B_p^o|^2
\right)\\
&=
\frac{1}{C^e(b)}
\frac{1}{2\pi^2}.
\end{split}
\label{eq:D-lower}
\end{equation}
Substituting \eqref{eq:D-lower} into (\ref{eq-W-est}) yields
\begin{equation}\label{ineq-b-big}
\mathcal W(\psi_K)
\ge
\frac{\pi^2}{2}
\left(
b+\frac{3}{b}
\right)
+
\frac{2\pi\DD}{b}
\tanh\left(
\frac{\pi\DD}{2}
\right).
\end{equation}

It remains to show that equality cannot occur. Assume that equality holds in \eqref{eq:D-lower}, then equality must hold throughout
\eqref{eq:d-pq-lower}, \eqref{eq:CS-even-A},
\eqref{eq:even-A}, \eqref{eq:even-B},
\eqref{eq:odd-A}, and \eqref{eq:odd-B}.
Firstly, from \eqref{eq:d-pq-lower},
\begin{equation*}
d_{pq}-p^2\alpha_q
=
p^2(p^2-4)
+
\frac{q^2(q^2-1)}{b^4}
\ge0,
\qquad
\forall p\in2\mathbb Z^+,\ q\in2\mathbb Z,
\end{equation*}
with equality if and only if $(p,q)=(2,0)$.

On the other hand, equality in \eqref{eq:CS-even-A} means that, for each fixed $p\in2\mathbb Z^+$, there exists $V_p\in\rn$ such that
\begin{equation*}
A_{pq}
=
\frac{v_q}{\alpha_q}V_p,
\qquad
\forall q\in2\mathbb N.
\end{equation*}
If there exists $p\in2\mathbb Z^+$ such that $V_p\neq0$, then
\begin{equation*}
\sum_{q\in2\mathbb N}
\left(
d_{pq}-p^2\alpha_q
\right)
|A_{pq}|^2
>0,
\end{equation*}
which contradicts the equality case in \eqref{eq:even-A}. Hence
$V_p=0$ for every $p\in2\mathbb Z^+$. Applying the same argument to $B_{pq}$ gives
\begin{equation*}
|A_p^e|^2
=
|B_p^e|^2
=
0,
\qquad
\forall p\in2\mathbb Z^+.
\end{equation*}
Moreover, equality in \eqref{eq:D-lower}, together with
\eqref{eq:Ce-Co}, similarly implies
\begin{equation*}
|A_p^o|^2
=
|B_p^o|^2
=
0,
\qquad
\forall p\in2\mathbb Z^+.
\end{equation*}
This contradicts \eqref{eq:sum-Ap}. Hence equality cannot hold in (\ref{ineq-b-big}).

\textbf{Case 2.} When $0<b<\sqrt{\frac{2}{3}}$, we estimate $\widetilde D$ from below.

    For $s\in\mathbb N_+$, we have
\begin{equation*}
\overline h_{rs}
=
\frac{2}{\sqrt b}h_{rs},
\qquad
\overline l_{rs}
=
\frac{2}{\sqrt b}l_{rs}.
\end{equation*}
Differentiating $\psi$ with respect to $y$ and evaluating at $y=0$ and $y=b/2$, respectively, gives
\begin{equation*}
\psi_y(x,0)
=
\frac{4\pi}{b\sqrt b}
\sum_{(r,s)\in S_2}
s
\left[
A_{rs}\cos(2\pi rx)
+
B_{rs}\sin(2\pi rx)
\right],
\end{equation*}
\begin{equation*}
\psi_y\left(x,\frac b2\right)
=
\frac{4\pi}{b\sqrt b}
\sum_{(r,s)\in S_2}
s
\left[
(-1)^sA_{rs}\cos(2\pi rx)
+
(-1)^sB_{rs}\sin(2\pi rx)
\right].
\end{equation*}
For each fixed $r\in2\mathbb Z+1$, define
\begin{equation*}
A_r^e
=
\sum_{s\in2\mathbb Z^+}sA_{rs},\qquad
B_r^e
=
\sum_{s\in2\mathbb Z^+}sB_{rs},\qquad
A_r^o
=
\sum_{s\in2\mathbb N+1}sA_{rs},\qquad
B_r^o
=
\sum_{s\in2\mathbb N+1}sB_{rs}.
\end{equation*}
Thus
\begin{equation*}
\psi_y(x,0)
=
\frac{4\pi}{b\sqrt b}
\sum_{r\in2\mathbb N+1}
\left[
(A_r^e+A_r^o)\cos(2\pi rx)
+
(B_r^e+B_r^o)\sin(2\pi rx)
\right],
\end{equation*}
\begin{equation*}
\psi_y\left(x,\frac b2\right)
=
\frac{4\pi}{b\sqrt b}
\sum_{r\in2\mathbb N+1}
\left[
(A_r^e-A_r^o)\cos(2\pi rx)
+
(B_r^e-B_r^o)\sin(2\pi rx)
\right].
\end{equation*}
Since $\psi$ is isometric, we get
\begin{equation}
\begin{split}
2
&=
\int_0^1
\left(
|\psi_y(x,0)|^2
+
\left|
\psi_y\left(x,\frac b2\right)
\right|^2
\right)\,\mathrm dx\\
&=
\frac{8\pi^2}{b^3}
\sum_{r\in2\mathbb N+1}
\left(
|A_r^e+A_r^o|^2
+
|B_r^e+B_r^o|^2
\right)
+
\frac{8\pi^2}{b^3}
\sum_{r\in2\mathbb N+1}
\left(
|A_r^e-A_r^o|^2
+
|B_r^e-B_r^o|^2
\right)\\
&=
\frac{16\pi^2}{b^3}
\sum_{r\in2\mathbb N+1}
\left(
|A_r^e|^2
+
|A_r^o|^2
+
|B_r^e|^2
+
|B_r^o|^2
\right).
\end{split}
\label{eq:sum-Ar}
\end{equation}

We now observe that
\begin{equation*}
\begin{split}
\widetilde d_{rs}
-
\frac{s^2(s^2-1+2b^2-3b^4)}{b^4}
&=
\left(
r^2+\frac{s^2}{b^2}
\right)^2
-
4r^2
-
\frac{s^2}{b^4}
-
\frac{s^2(s^2-1+2b^2-3b^4)}{b^4}\\
&=
r^4
+
\frac{2r^2s^2}{b^2}
+
\frac{s^4-s^2}{b^4}
-
4r^2
-
\frac{s^2(s^2-1+2b^2-3b^4)}{b^4}\\
&=
\frac{
s^4-s^2
+
r^2b^4(r^2-4)
+
2r^2s^2b^2
-
s^4+s^2
-
2b^2s^2
+
3b^4s^2
}{b^4}\\
&=
\frac{
b^4r^2(r^2-4)
+
2b^2s^2(r^2-1)
+
3b^4s^2
}{b^4}\\
&\ge
3(s^2-1)
\ge0.
\end{split}
\end{equation*}
Equality holds if and only if $(r,s)=(1,1)$.

Next, set
\begin{equation*}
\gamma_s^2
=
s^2-1+2b^2-3b^4.
\end{equation*}
By the Cauchy-Schwarz inequality, we have
\begin{equation}
\frac{|A_r^e|^2}{b^4}
\le
\widetilde C^e(b)
\sum_{s\in2\mathbb Z^+}
\frac{s^2\gamma_s^2}{b^4}
|A_{rs}|^2
\le
\widetilde C^e(b)
\sum_{s\in2\mathbb Z^+}
\widetilde d_{rs}|A_{rs}|^2,
\label{eq:tilde-even-A}
\end{equation}
\begin{equation}
\frac{|A_r^o|^2}{b^4}
\le
\widetilde C^o(b)
\sum_{s\in2\mathbb N+1}
\frac{s^2\gamma_s^2}{b^4}
|A_{rs}|^2
\le
\widetilde C^o(b)
\sum_{s\in2\mathbb N+1}
\widetilde d_{rs}|A_{rs}|^2,
\label{eq:tilde-odd-A}
\end{equation}
where $
\widetilde C^e(b)
=
\sum_{s\in2\mathbb Z^+}\frac{1}{\gamma_s^2}$ and $
\widetilde C^o(b)
=
\sum_{s\in2\mathbb N+1}\frac{1}{\gamma_s^2}$. Similarly, it follows that
\begin{equation}
\frac{|B_r^e|^2}{b^4}
\le
\widetilde C^e(b)
\sum_{s\in2\mathbb Z^+}
\widetilde d_{rs}|B_{rs}|^2,
\label{eq:tilde-even-B}
\end{equation}
\begin{equation}
\frac{|B_r^o|^2}{b^4}
\le
\widetilde C^o(b)
\sum_{s\in2\mathbb N+1}
\widetilde d_{rs}|B_{rs}|^2.
\label{eq:tilde-odd-B}
\end{equation}

Using the Mittag-Leffler expansions of $\tan x$ and $\cot x$, and observing that $0<b<\sqrt{\frac{2}{3}}$, $\sqrt{\frac{2}{3}}\le\BB:=\sqrt{1-2b^2+3b^4}<1$, we obtain
\begin{equation*}
\begin{split}
\widetilde C^e(b)
&=
\sum_{s\in2\mathbb Z^+}
\frac{1}{
s^2-\BB^2
}=
\frac12
\sum_{s\in2\mathbb Z}
\frac{1}{
s^2-\BB^2
}
+
\frac{1}{
2\BB^2
}\\
&=
-\frac18
\sum_{n\in\mathbb Z}
\frac{1}{
\frac{\BB^2}{4}-n^2
}
+
\frac{1}{
2\BB^2
}\\
&=
-\frac{\pi}{
4\BB
}
\cot\left(
\frac{
\pi\BB
}{2}
\right)
+
\frac{1}{
2\BB^2
}.
\end{split}
\end{equation*}
Moreover,
\begin{equation*}
\widetilde C^o(b)
=
\sum_{s\in2\mathbb N+1}
\frac{1}{
s^2-\BB^2
}
=
\frac{\pi}{
4\BB
}
\tan\left(
\frac{
\pi\BB
}{2}
\right).
\end{equation*}
Therefore, we get

\begin{equation}
\begin{split}
\widetilde C^o(b)-\widetilde C^e(b)
&=
\frac{\pi}{
4\BB
}
\left[
\tan\left(
\frac{
\pi\BB
}{2}
\right)
+
\cot\left(
\frac{
\pi\BB
}{2}
\right)
\right]
-
\frac{1}{
2\BB^2
}=
\frac{1}{
2\BB^2
}
\left(
\frac{
\pi\BB
}{
\sin\left(
\pi\BB
\right)
}
-1
\right)
>0.
\end{split}
\label{eq:Cto-Co-Ce}
\end{equation}

Combining \eqref{eq:sum-Ar},
\eqref{eq:tilde-even-A}, \eqref{eq:tilde-odd-A},
\eqref{eq:tilde-even-B}, \eqref{eq:tilde-odd-B}, and
\eqref{eq:Cto-Co-Ce}, we obtain
\begin{equation*}
\begin{split}
\widetilde D
&\ge
\sum_{(r,s)\in S_2}
\widetilde d_{rs}a_{rs}\ge
\frac{1}{\widetilde C^e(b)b^4}
\sum_{r\in2\mathbb N+1}
\left(
|A_r^e|^2+|B_r^e|^2
\right)
+
\frac{1}{\widetilde C^o(b)b^4}
\sum_{r\in2\mathbb N+1}
\left(
|A_r^o|^2+|B_r^o|^2
\right)\\
&\ge
\frac{1}{\widetilde C^o(b)b^4}
\sum_{r\in2\mathbb N+1}
\left(
|A_r^e|^2
+
|A_r^o|^2
+
|B_r^e|^2
+
|B_r^o|^2
\right)\\
&=
\frac{1}{\widetilde C^o(b)}
\frac{1}{8\pi^2b}.
\end{split}
\end{equation*}
Substituting the expression for $\widetilde C^o(b)$ into the preceding Willmore energy estimate gives
\begin{equation*}
\mathcal W(\psi_K)
\ge
\frac{\pi^2}{2}
\left(
4b+\frac{1}{b}
\right)
+
\frac{\pi\BB}{b}
\cot\left(
\frac{\pi\BB}{2}
\right).
\end{equation*}
The same argument as in the case $b>\sqrt{\frac{2}{3}}$ shows that equality cannot hold.
\end{proof}
\subsection{Auxiliary Lemmas in Section \ref{section5}}
\begin{lemma}\label{lemma-b-big}
 $J_1(b)$ is strictly increasing on
$\left(\sqrt{\frac{2}{3}},+\infty\right)$.
\end{lemma}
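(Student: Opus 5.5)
The plan is to differentiate $J_1$ directly and show $J_1'(b)>0$, after a change of variable that turns the derivative into a function of one variable. Put
\[
t:=\frac{\pi\DD}{2}=\frac{\pi}{2}\sqrt{\frac{3b^2-2}{2}},
\]
which maps $\left(\sqrt{\tfrac23},\infty\right)$ increasingly onto $(0,\infty)$. Inverting gives $b^2=\frac23+\frac{8t^2}{3\pi^2}$. In this variable
\[
J_1(b)=\frac{\pi^2}{2}\left(b+\frac3b\right)+\frac{4}{b}\,t\tanh t ,
\qquad
\frac{\mathrm dt}{\mathrm db}=\frac{3\pi b}{4\DD}=\frac{3\pi^2 b}{8t}.
\]

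\textbf{Step 1: compute $b^2J_1'(b)$.} Write $g(t)=t\tanh t$, so that $g'(t)=\tanh t+t\,\mathrm{sech}^2t$. Differentiating gives
\[
J_1'(b)=\frac{\pi^2}{2}-\frac{3\pi^2}{2b^2}-\frac{4t\tanh t}{b^2}+\frac{3\pi^2}{2}\left(\frac{\tanh t}{t}+\mathrm{sech}^2t\right).
\]
Multiply by $b^2$ and substitute $b^2=\frac23+\frac{8t^2}{3\pi^2}$. The $\frac{3\pi^2}{2}b^2$ factor becomes $\pi^2+4t^2$. The term $4t^2\cdot\frac{\tanh t}{t}=4t\tanh t$ produced this way cancels exactly the negative term $-4t\tanh t$. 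After this cancellation,
\[
\Phi(t):=b^2J_1'(b)=-\frac{7\pi^2}{6}+\frac{4t^2}{3}+\pi^2\,\frac{\tanh t}{t}+(\pi^2+4t^2)\,\mathrm{sech}^2t ,
\]
and it suffices to prove $\Phi(t)>0$ for all $t>0$. As a sanity check, $\Phi(0^+)=\frac{5\pi^2}{6}>0$.

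\textbf{Step 2: three ranges of $t$.}
\begin{enumerate}
\item Large $t$. For $t^2\ge \frac{7\pi^2}{8}$ (roughly $t\ge2.94$), the term $\frac{4t^2}{3}$ alone dominates $\frac{7\pi^2}{6}$. The remaining terms are positive, so $\Phi>0$.
\item Small $t$. Use the elementary bounds $\frac{\tanh t}{t}\ge 1-\frac{t^2}{3}$ and $\mathrm{sech}^2t\ge 1-t^2$, and drop $4t^2\mathrm{sech}^2t\ge0$. This gives $\Phi(t)\ge \frac{5\pi^2}{6}-\frac{4(\pi^2-1)}{3}t^2$, which is positive for $t\lesssim 0.83$.
\item Intermediate $t\in[0.8,3]$. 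On this compact interval I would use monotonicity: $\frac{\tanh t}{t}$ and $\mathrm{sech}^2 t$ are decreasing, while $t^2$ and $t^2\mathrm{sech}^2t\ge0$ are harmless. Subdivide into finitely many subintervals $[t_i,t_{i+1}]$. On each, bound $\Phi$ below by
\[
-\frac{7\pi^2}{6}+\frac{4t_i^2}{3}+\pi^2\frac{\tanh t_{i+1}}{t_{i+1}}+\pi^2\mathrm{sech}^2t_{i+1},
\]
and check positivity numerically with rigorous rounding.
\end{enumerate}

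\textbf{Main obstacle.} The intermediate range is the hardest part. Near $t\approx 1.5$–$2$ the positive contributions $\pi^2\frac{\tanh t}{t}$ and $\pi^2\mathrm{sech}^2 t$ have mostly decayed while $\frac{4t^2}{3}$ has not yet caught up with $\frac{7\pi^2}{6}$, so $\Phi$ is smallest there. There I would first try sharper bounds that keep the term $4t^2\mathrm{sech}^2t$ instead of discarding it. If the margin is too thin for a clean analytic bound, I would fall back on a fine enough interval partition.

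Once $\Phi>0$ on $(0,\infty)$, we get $J_1'(b)>0$ on $\left(\sqrt{\tfrac23},\infty\right)$, so $J_1$ is strictly increasing there. As a consequence, $\inf J_1$ is the limit $\frac{\pi^2}{2}\left(\sqrt{\tfrac23}+3\sqrt{\tfrac32}\right)=\frac{11}{12}\sqrt6\,\pi^2$ as $b\to\sqrt{\tfrac23}$, which is the value used in the proof of Theorem \ref{thm H-B-M iso}.
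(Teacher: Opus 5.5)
Your Step 1 is correct and is the paper's reduction: the paper's quantity $\frac{12b^2}{\pi^2}\cosh^2\bigl(\frac{\pi\DD}{2}\bigr)J_1'(b)$ equals $\frac{12}{\pi^2}\cosh^2 t\,\Phi(t)$ with your $t=\frac{\pi\DD}{2}$. Your large-$t$ and small-$t$ ranges are also fine; both elementary bounds follow from $\tanh t\le t$. The gap is in the intermediate range, which is where all the difficulty lies. The lower bound you display there discards $4t^2\,\mathrm{sech}^2t$, so at best it certifies positivity of $\Phi_0(t):=\Phi(t)-4t^2\,\mathrm{sech}^2t$. But $\Phi_0$ is itself negative on roughly $[1.3,2.2]$. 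For instance, $\Phi_0(1.8)\approx-0.98$, while $4t^2\,\mathrm{sech}^2t\approx1.34$ there, so $\Phi(1.8)\approx0.36$. Hence the displayed partition bound cannot succeed at any mesh. Your fallback to ``a fine enough interval partition'' would not rescue it: the obstruction is the dropped term, not the mesh size. You mention keeping $4t^2\,\mathrm{sech}^2t$ as a first attempt, but this is not an optional sharpening; it is exactly what keeps $\Phi$ positive. A repair is to bound $4t^2\,\mathrm{sech}^2t\ge4t_i^2\,\mathrm{sech}^2t_{i+1}$ on $[t_i,t_{i+1}]$. Near $t\approx1.87$ we have $\min\Phi\approx0.34$, and moving to endpoints costs about $6h$ on a subinterval of length $h$. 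So a mesh of a few hundredths then works, giving a valid but computer-assisted proof.

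The paper avoids numerics entirely. It multiplies $\Phi$ by $\frac{12}{\pi^2}\cosh^2t$ and uses $2\cosh^2t=1+\cosh2t$ and $2\sinh t\cosh t=\sinh2t$. This turns the expression into an entire power series in $\DD=\frac{2t}{\pi}$: $10+\bigl(16-\frac32\pi^2\bigr)\DD^2+\sum_{n\ge2}c_n\DD^{2n}$, where $c_n=\frac{\pi^{2n-2}}{(2n)!}\bigl(4n(2n-1)-\pi^2(7-\frac{12}{2n+1})\bigr)$. Every coefficient except $c_2$ is positive. AM--GM, pairing the constant with the $\DD^8$ term and the $\DD^2$ term with the $\DD^6$ term, dominates $c_2\DD^4$ uniformly for all $\DD>0$, with no case split. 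Your route does show where $\Phi$ is small. The paper's series, however, gives a short, fully analytic certificate and would also make your separate large-$t$ and small-$t$ arguments unnecessary.
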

\begin{proof}
Set $\DD:=\sqrt{\frac{3b^2-2}{2}}$. Differentiating $J_1(b)$ gives
\begin{equation*}
\begin{split}
J_1'(b)
={}&
\frac{\pi^2}{2}
-\frac{3\pi^2}{2b^2}
-\frac{2\pi\DD}{b^2}
\tanh\left(
\frac{\pi\DD}{2}
\right)
+
\frac{3\pi}{\DD}
\tanh\left(
\frac{\pi\DD}{2}
\right)+
\frac{3\pi^2}{2}
\operatorname{sech}^2\left(
\frac{\pi\DD}{2}
\right)
\\
={}&
\frac{\pi^2}{2}\left(1-\frac{3}{b^2}\right)
+
\frac{2\pi}{b^2\DD}
\tanh\left(
\frac{\pi\DD}{2}
\right)
+
\frac{3\pi^2}{2}
\operatorname{sech}^2\left(
\frac{\pi\DD}{2}
\right),
\end{split}
\end{equation*}

Multiplying both sides by the positive factor $\frac{12b^2}{\pi^2}
\cosh^2\left(
\frac{\pi\DD}{2}
\right)$ and expanding $\sinh$ and $\cosh$ into their power series, we obtain
\begin{equation*}
\begin{split}
&\frac{12b^2}{\pi^2}
\cosh^2\left(
\frac{\pi\DD}{2}
\right)J_1'(b)
\\&=
6(b^2-3)
\cosh^2\left(
\frac{\pi\DD}{2}
\right)+
\frac{24}{\pi\DD}
\cosh\left(
\frac{\pi\DD}{2}
\right)
\sinh\left(
\frac{\pi\DD}{2}
\right)
+18b^2\\
&=
3(b^2-3)
+
3(b^2-3)
\cosh\left(\pi\DD
\right)+
\frac{12}{\pi\DD}
\sinh\left(\pi\DD
\right)
+18b^2
\\
&=
21b^2-9
+
(3b^2-9)
\left(
\sum_{n=0}^{\infty}
\frac{\pi^{2n}}{(2n)!}
\DD^{2n}
\right)
+
\frac{12}{\pi\DD}
\left(
\sum_{n=0}^{\infty}
\frac{\pi^{2n+1}}{(2n+1)!}\DD^{2n+1}
\right)
\\&=
5
+
14\DD^2
+
\left(
-7+2\DD^2
\right)
\sum_{n=0}^{\infty}
\frac{\pi^{2n}}{(2n)!}
\DD^{2n}
+
12
\sum_{n=0}^{\infty}
\frac{\pi^{2n}}{(2n+1)!}
\DD^{2n}
\\
&=
5
+
14\DD^2
-7
+
2\DD^2
+12
-
\frac{7\pi^2}{2!}\DD^2
+
\frac{12\pi^2}{3!}\DD^2+
\sum_{n=2}^{\infty}
\left(
\frac{2\pi^{2n-2}}{(2n-2)!}
-
\frac{7\pi^{2n}}{(2n)!}
+
\frac{12\pi^{2n}}{(2n+1)!}
\right)\DD^{2n}
\\
&=
10
+
\left(
16-\frac{3}{2}\pi^2
\right)\DD^2
+
\sum_{n=2}^{\infty}
\left[
\frac{\pi^{2n-2}}{(2n)!}
\left(
4n(2n-1)
-
\pi^2\left(
7-\frac{12}{2n+1}
\right)
\right)
\right]
\DD^{2n} .
\end{split}
\end{equation*}
For $n\geq4$, we get
$
4n(2n-1)
-
\pi^2\left(
7-\frac{12}{2n+1}
\right)
\geq
112-7\pi^2
>0.
$
Hence the terms in the above equation corresponding to $n\geq4$ are positive. Discarding the terms with $n\geq5$, and noting that
$
60-\frac{37}{7}\pi^2>0$ and $
112-\frac{17}{3}\pi^2>0$, 
the AM-GM inequality gives
\begin{equation*}
\begin{split}
&\frac{12b^2}{\pi^2}
\cosh^2\left(
\frac{\pi\DD}{2}
\right)
J_1'(b)\\&>
10
+
\left(
16-\frac{3\pi^2}{2}
\right)\DD^2
-
\frac{\pi^2(23\pi^2-120)\DD^4}{120}
+
\frac{\pi^4\DD^6}{720}
\left(
60-\frac{37}{7}\pi^2
\right)+
\frac{\pi^6\DD^8}{40320}
\left(
112-\frac{17}{3}\pi^2
\right)
\\
&\geq
\left[
2\sqrt{
\frac{10\pi^6}{40320}
\left(
112-\frac{17}{3}\pi^2
\right)
}
\right.
\left.
+
2\sqrt{
\frac{\pi^4}{720}
\left(
16-\frac{3\pi^2}{2}
\right)
\left(
60-\frac{37}{7}\pi^2
\right)
}
-
\frac{\pi^2(23\pi^2-120)}{120}
\right]\DD^4
\\
&=
\frac{\pi^2}{2520}
\left[
10\pi
\sqrt{
21(336-17\pi^2)
}
\right.
\left.
+
6
\sqrt{
70(32-3\pi^2)(420-37\pi^2)
}
-
21(23\pi^2-120)
\right]\DD^4
\\
&>0.
\end{split}
\end{equation*}
Hence $
J_1'(b)>0$,
when $b\in\left(\sqrt{\frac{2}{3}},+\infty\right)$. This proves the Lemma.
\end{proof}
We need the following lemma. The proof is straightforward and we omit it.
\begin{lemma}\label{lemma-P(x)}
Let $
P(x)
:=
\frac12+\frac{x}{\pi}\cot\frac{\pi x}{2}
$. Then for all $x\in\left[\sqrt{\frac23},1\right)$, we have
\begin{equation}
\frac12<P(x)<1-\frac{x}{2},\qquad
\label{eq:P-bound}
-\frac12<P'(x)<0, 
\qquad
-1<P''(x)<0,
\end{equation}
\end{lemma}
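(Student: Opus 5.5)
The plan is to substitute $t:=\frac{\pi x}{2}$. Then $x\in\left[\sqrt{\frac23},1\right)$ corresponds to $t\in[t_0,\frac{\pi}{2})$ with $t_0=\frac{\pi}{2}\sqrt{\frac23}\approx1.2825$, and
\[
P=\frac12+\frac{2}{\pi^2}\,t\cot t .
\]
Since $\cot t>0$ on this interval, $P(x)>\frac12$ is immediate. I will prove the three bounds in the order $P''$, then $P'$, then the upper bound on $P$, because the bound on $P'$ is easiest to get from the sign of $P''$.

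\emph{Second derivative.} Using $\frac{\mathrm dt}{\mathrm dx}=\frac{\pi}{2}$, differentiation gives
\[
P'(x)=\frac1\pi\,\frac{\sin t\cos t-t}{\sin^2 t}.
\]
Differentiating once more and simplifying the numerator with $\cos 2t-1=-2\sin^2t$ gives
\[
P''(x)=\frac{t\cos t-\sin t}{\sin^3 t}.
\]
This is negative because $\tan t>t$ on $(0,\frac{\pi}{2})$.

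For $P''>-1$ I need $h(t):=\sin^3t-\sin t+t\cos t>0$ on $[t_0,\frac{\pi}{2})$. We have $h(\frac{\pi}{2})=0$ and $h'(t)=\sin t\,\bigl(\tfrac32\sin 2t-t\bigr)$. Since $\sin 2t$ decreases on $[\frac{\pi}{4},\frac{\pi}{2}]$, we get $\frac32\sin2t\le\frac32\sin 2t_0\approx0.82<t_0\le t$. Hence $h'<0$, so $h$ strictly decreases to $0$, which gives $h>0$. This monotonicity check is the only genuinely numerical step, and the main obstacle is simply verifying it cleanly at $t_0$.

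\emph{First derivative.} The inequality $P'<0$ follows from $\sin t\cos t=\frac12\sin 2t<t$. Since $P''<0$, $P'$ is strictly decreasing on the interval. Therefore $P'(x)>\lim_{t\to\pi/2}\frac{1}{\pi}\frac{\sin t\cos t-t}{\sin^2t}=-\frac12$.

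\emph{Upper bound on $P$.} Put $u:=\frac{\pi}{2}-t=\frac{\pi(1-x)}{2}\in\bigl(0,\frac{\pi}{2}(1-\sqrt{2/3})\bigr]$, so $u\lesssim0.288$. The inequality $P<1-\frac x2$ is equivalent to $\tan u<\frac ux$, that is,
\[
\tan u-u<\frac{2u^2}{\pi x}.
\]
Because $(\tan u-u)/u^3$ is increasing, $\tan u-u\le c\,u^3$ on this range with $c$ only slightly above $\frac13$ (evaluate it at the endpoint $u\approx0.288$). It then suffices to have $c\,u<\frac{2}{\pi x}$. This holds comfortably, since the left side is below about $0.1$ and the right side exceeds $\frac{2}{\pi}$. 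Alternatively, the same conclusion follows from $P''<0$ together with a comparison of tangent lines at $x=1$, where both sides tend to $\frac12$ with slopes $-\frac12$; I would keep this as a fallback.
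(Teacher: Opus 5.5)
The paper omits the proof of this lemma (it calls it straightforward), so there is no argument to compare against. Your proof is correct and complete, and it fills that gap.

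I checked each step:
\begin{itemize}
\item The derivative formulas $P'(x)=\frac1\pi\frac{\sin t\cos t-t}{\sin^2t}$ and $P''(x)=\frac{t\cos t-\sin t}{\sin^3t}$ are right.
\item The identity $h'(t)=\sin t\,(\tfrac32\sin 2t-t)$ holds, and the numerical check $\tfrac32\sin 2t_0\approx0.818<t_0\approx1.283$ is sound.
\item Strict monotonicity of $P'$ gives the strict limit bound $P'>-\tfrac12$.
\item The reduction of $P<1-\frac x2$ to $\tan u-u<\frac{2u^2}{\pi x}$ is exact. The comparison $c\,u\approx0.0994<\frac{2}{\pi}$ closes it. Monotonicity of $(\tan u-u)/u^3$ follows from the nonnegative Taylor coefficients of $\tan$.
\end{itemize}

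The bound $P''>-1$ is sharp: $P''\to-1$ as $x\to1^-$. So a crude numerical estimate would not suffice there, and your monotonicity argument for $h$ with $h(\tfrac{\pi}{2})=0$ is the right tool.

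For the upper bound on $P$, your fallback is the cleaner route and makes the Taylor estimate unnecessary. Set $g(x):=P(x)-1+\frac x2$. Then $g'=P'+\frac12>0$ by what you have already proved, and $g(x)\to0$ as $x\to1^-$. Hence $g<0$ on $[\sqrt{2/3},1)$.
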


\begin{lemma}\label{lemma-b-small}
The function $J_2(b)$ has a unique critical point $b_*\in\left(\frac{1}{2},\frac{1}{\sqrt{3}}\right)$.
Moreover,
\[
J_2'(b)<0
\quad\text{for }0<b<b_*,
\qquad
J_2'(b)>0
\quad\text{for }b_*<b<\sqrt{\frac{2}{3}}.
\]
Consequently,
$
\min_{0<b<\sqrt{\frac{2}{3}}}J_2(b)
=
J_2(b_*).
$
\end{lemma}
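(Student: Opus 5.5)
Write $x=\BB(b)=\sqrt{1-2b^2+3b^4}$ and use the function $P$ of Lemma \ref{lemma-P(x)}. Since $\frac{\pi\BB}{b}\cot\frac{\pi\BB}{2}=\frac{\pi^2}{b}\bigl(P(\BB)-\frac12\bigr)$, we can rewrite
\[
J_2(b)=\frac{\pi^2}{2}\Bigl(4b+\frac1b\Bigr)+\frac{\pi^2}{b}\Bigl(P(\BB)-\frac12\Bigr)=\pi^2\Bigl(2b+\frac{P(\BB)}{b}\Bigr).
\]
On $0<b<\sqrt{2/3}$ we have $\BB\in[\sqrt{2/3},1)$, because $1-2b^2+3b^4$ has its minimum $2/3$ at $b^2=1/3$. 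Hence the bounds of Lemma \ref{lemma-P(x)} apply. Differentiating, with $\BB'(b)=\frac{2b(3b^2-1)}{\BB}$, gives
\[
\frac{b^2}{\pi^2}J_2'(b)=2b^2-P(\BB)+b\,P'(\BB)\,\frac{2b(3b^2-1)}{\BB}=:\Phi(b).
\]
This is, after expanding $P,P'$, exactly the left side of \eqref{eq-b*}. So it suffices to show that $\Phi$ has a unique zero $b_*\in(\frac12,\frac1{\sqrt3})$, with $\Phi<0$ before it and $\Phi>0$ after it.

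\textbf{Step 1: the region $b\ge 1/\sqrt3$.} Here $3b^2-1\ge0$ and $P'<0$, so the last term is $\le 0$, and a sign argument does not work directly. I would instead use the bounds on $P'$ and $P$ quantitatively. On $[1/\sqrt3,\sqrt{2/3})$ we have $|P'|<\frac12$, so the last term is at least $-\frac{b^2(3b^2-1)}{\BB}$. Together with $P<1-\frac{\BB}{2}$ this gives
\[
\Phi>2b^2-1+\frac{\BB}{2}-\frac{b^2(3b^2-1)}{\BB}.
\]
I would then check that this explicit algebraic function of $b$ is positive on $[1/\sqrt3,\sqrt{2/3})$. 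At $b^2=1/3$ it equals $2/3-1+\sqrt{2/3}/2>0$, and it stays positive toward $b^2=2/3$. This may need a slightly sharper bound on $P'$ near $\BB=1$; there $P(1)=\frac12$ and $P'(1)=-\frac12$, and $\Phi(\sqrt{2/3})=\frac43-\frac12-\frac12\cdot\frac23=\frac12>0$.

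\textbf{Step 2: the region $0<b\le 1/\sqrt3$.} Here $3b^2-1\le0$ and $P'<0$, so the last term of $\Phi$ is nonnegative. The sign of $\Phi$ must therefore come from comparing $2b^2-P$ with that term. As $b\to0$ we have $\BB\to1$ and $P\to\frac12$, so $\Phi\to-\frac12<0$. At $b=\frac12$ we get $\BB=\sqrt{1-\frac12+\frac3{16}}=\sqrt{11}/4$, and a numerical check of $P$ and $P'$ there should give $\Phi(\frac12)<0$. At $b=1/\sqrt3$ Step 1 already gives $\Phi>0$. The intermediate value theorem then yields a zero in $(\frac12,\frac1{\sqrt3})$.

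\textbf{Step 3: uniqueness and the main obstacle.} The main obstacle is showing that this zero is unique. I would prove $\Phi$ strictly increasing on $(0,1/\sqrt3]$ by differentiating again and using $-1<P''<0$ and $-\frac12<P'<0$ from Lemma \ref{lemma-P(x)}. With $t=b^2$ and $\BB$ decreasing in $t$ on this range, the terms are as follows. The term $2b^2$ gives derivative $4b$. The term $-P(\BB)$ gives $-P'(\BB)\,\BB'$, which is $\ge0$ here since $\BB'\le0$ and $P'<0$. The remaining term is $\frac{2b^2(3b^2-1)}{\BB}P'(\BB)$. Its derivative splits into a part with $P''$, which has a good sign because it is a product of $P''<0$, $(3b^2-1)\le0$ and $\BB'\le0$ up to signs to be tracked, and a part bounded by $\frac12$ times the derivative of $\frac{2b^2(3b^2-1)}{\BB}$. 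That second part is dominated by $4b$ after an elementary estimate using $\BB\ge\sqrt{2/3}$.

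If the crude bounds on $P''$ prove insufficient near $b\to0$, I would fall back on splitting $(0,\frac12]$, where $\Phi<0$ can be shown directly from $P>\frac12$ and the bound $2b^2\le\frac12$ plus a term estimate, and $[\frac12,\frac1{\sqrt3}]$, where the monotonicity estimate is easier since $b$ is bounded below. Combining the sign information with $J_2'=\pi^2\Phi/b^2$ gives the stated monotonicity, and hence $\min J_2=J_2(b_*)$.
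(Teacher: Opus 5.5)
\textbf{Steps 1 and 2 coincide with the paper.} Your lower bound in Step 1 is exactly the paper's: it equals $E(b)/\BB$ with $E(b)=(2b^2-1)\BB+\frac{1-3b^4}{2}$. What you leave unchecked, the paper proves: $E'(b)=\frac{6b}{\BB}\bigl(4b^4-3b^2+1-b^2\BB\bigr)>0$ and $E(1/\sqrt3)=\frac13\bigl(1-\sqrt{2/3}\bigr)>0$. No sharper bound on $P'$ is needed. Also, $\Phi(\sqrt{2/3})=\frac43-\frac12-\frac12\cdot\frac43=\frac16$, not $\frac12$; this is harmless. The sign checks at $b=\frac12$ and $b=1/\sqrt3$ are also the paper's. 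You depart from the paper in Step 3, and there the argument as written is wrong.

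\textbf{The gap.} On $(0,1/\sqrt3)$ you have $\BB'<0$ and $P'<0$, so $-P'(\BB)\BB'<0$, not $\ge 0$. As $b$ increases, $\BB$ decreases, $P(\BB)$ increases, and so $-P(\BB)$ decreases. Likewise the $P''$-contribution has the wrong sign for you:
\[
\frac{2b^2(3b^2-1)}{\BB}\,P''(\BB)\,\BB'=\frac{4b^3(1-3b^2)^2}{\BB^2}\,P''(\BB)\le 0 .
\]
So both terms you discarded as harmless are negative. Showing that $4b$ dominates only the remaining $P'$-term therefore does not prove $\Phi'>0$.

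The route can be repaired with the same inputs. Combining the two $P'$-contributions gives
\[
\Phi'(b)=4b+\frac{2b\,(9b^6-9b^4+9b^2-1)}{\BB^3}\,P'(\BB)+\frac{4b^3(1-3b^2)^2}{\BB^2}\,P''(\BB),
\]
in which $P$ itself cancels. On $t=b^2\in(0,\frac13]$ the following hold:
\begin{itemize}
\item the cubic $9t^3-9t^2+9t-1$ is increasing and at most $\frac43$;
\item $\BB^2\ge\frac23$;
\item $t(1-3t)^2\le\frac4{81}$.
\end{itemize}
Using $P'>-\frac12$ and $P''>-1$ from Lemma \ref{lemma-P(x)}, you get $\Phi'(b)\ge\bigl(4-\sqrt6-\frac{8}{27}\bigr)b>0$. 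With this correction your monotonicity argument goes through.

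\textbf{Comparison with the paper.} The paper proves $J_2''>0$ on $(0,1/\sqrt3]$ instead. There the coefficients of $P'$ and $P''$, namely $\frac{2b^2(1-b^2)(9b^4+1)}{\BB^3}$ and $\frac{4b^4(1-3b^2)^2}{\BB^2}$, have fixed sign. The positive driver is $2P(\BB)>1$, and the negative terms are absorbed via $t(1-t)(9t^2+1)\le\frac49$ and $t(1-3t)\le\frac1{12}$.

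Your $\Phi=b^2J_2'/\pi^2$ replaces the $P$-term by the explicit $4b$. The cost is a $P'$-coefficient that changes sign near $b^2\approx\frac18$. Either version gives the single sign change.

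Your fallback on $(0,\frac12]$ would not work. At $b=\frac12$, the bounds $P>\frac12$ and $2b^2\le\frac12$ leave zero margin against the positive term $\frac{2b^2(1-3b^2)}{\BB}|P'(\BB)|$. That is why the sign at $b=\frac12$ must come from the exact value of $J_2'(\frac12)$.
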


\begin{proof}
We have now $
J_2(b)
=
\pi^2
\left(
2b+\frac{P(\BB(b))}{b}
\right).
$
Since
$
\BB'(b)
=
\frac{2b(3b^2-1)}{\BB}$, we have
\begin{equation}
J_2'(b)
=
\pi^2
\left(
2-\frac{P(\BB)}{b^2}
+
\frac{2(3b^2-1)}{\BB}P'(\BB)
\right).
\label{eq:J2-first}
\end{equation}

Next, we determine the sign of $J_2'(b)$ in two cases.

For $0<b\leq\frac{1}{\sqrt{3}}$, differentiating
\eqref{eq:J2-first} and applying Lemma \ref{lemma-P(x)} gives
\begin{equation}
\begin{split}
J_2''(b)
={}&
\frac{\pi^2}{b^3}
\left(
2P(\BB)
+
\frac{2b^2(1-b^2)(9b^4+1)}{\BB^3}
P'(\BB)
\right.
\left.
+
\frac{4b^4(1-3b^2)^2}{\BB^2}
P''(\BB)
\right)
\\
>{}&
\frac{\pi^2}{b^3}
\left(
1
-
\frac{b^2(1-b^2)(9b^4+1)}{\BB^3}
-
\frac{4b^4(1-3b^2)^2}{\BB^2}
\right).
\end{split}
\label{eq:J2-second}
\end{equation}

Let $\mathfrak{t}=b^2$, and set $
Q_1(\mathfrak{t})
=
\mathfrak{t}(1-\mathfrak{t})(9{\mathfrak{t}}^2+1)$, $
Q_2(\mathfrak{t})
=
\mathfrak{t}(1-3\mathfrak{t})
$. It is immediate that $Q_1(\mathfrak{t})$ is increasing for $\mathfrak{t}\in\left(0,\frac13\right]$, so we get
\begin{equation}
Q_1(\mathfrak{t})
\leq
Q_1\left(\frac13\right)
=
\frac49.
\label{eq:J2-Q1}
\end{equation}
On the other hand, for $\mathfrak{t}\in\left(0,\frac13\right)$, we have
\begin{equation}
0\leq Q_2(\mathfrak{t})\leq\frac{1}{12}.
\label{eq:J2-Q2}
\end{equation}
Substituting \eqref{eq:J2-Q1} and \eqref{eq:J2-Q2} into
\eqref{eq:J2-second}, and using
$\BB\in\left[\sqrt{\frac23},1\right)$, we obtain
\begin{equation}
\begin{split}
J_2''(b)
\geq{}
\frac{\pi^2}{b^3}
\left(
1
-
\frac{\frac49}{\left(\sqrt{\frac23}\right)^3}
-
\frac{4\left(\frac1{12}\right)^2}
{\left(\sqrt{\frac23}\right)^2}
\right)
=
\frac{\pi^2}{b^3}
\left(
1-\sqrt{\frac23}-\frac1{24}
\right)
>0.
\end{split}
\label{eq:J2-convex}
\end{equation}

For every $b\in\left(0,\frac1{\sqrt3}\right)$. When $b=\frac{1}{2}$, we also have
\[
J_2'\left(\frac12\right)
=
\pi^2
\left(
-\frac{13}{\pi\sqrt{11}}
\cot\frac{\pi\sqrt{11}}{8}
+
\frac14
\csc^2\frac{\pi\sqrt{11}}{8}
\right)
<0.
\]
On the other hand, substituting $b=\frac1{\sqrt3}$ into
\eqref{eq:J2-first} gives
\[
J_2'\left(\frac1{\sqrt3}\right)
=
\pi^2
\left(
\frac12
-
\frac{\sqrt6}{\pi}
\cot\frac{\pi\sqrt6}{6}
\right)
>0.
\]
By \eqref{eq:J2-convex}, there exists a unique $b_*
\in
\left(
\frac12,\frac1{\sqrt3}
\right)
\subset
\left(
0,\frac1{\sqrt3}
\right)$
such that $J_2'(b_*)=0$. 

It remains to prove that $J_2'(b)>0$ for
$\frac1{\sqrt3}\leq b<\sqrt{\frac23}$.
By Lemma \ref{lemma-P(x)}, \eqref{eq:J2-first} gives
\begin{equation}
\begin{split}
J_2'(b)
>
\pi^2
\left(
2
-
\frac{1-\frac{\BB}{2}}{b^2}
-
\frac{3b^2-1}{\BB}
\right)
=
\frac{\pi^2}{b^2\BB}
\left(
(2b^2-1)\BB
+
\frac{1-3b^4}{2}
\right).
\end{split}
\label{eq:J2-large-b}
\end{equation}
Thus it suffices to show that
$
E(b)
:=
(2b^2-1)\BB
+
\frac{1-3b^4}{2}
>0.$
Differentiating $E(b)$ gives
\begin{equation*}
\begin{split}
E'(b)
={}&
4b\BB
+
\frac{2b(2b^2-1)(3b^2-1)}
{\BB}
-
6b^3
=\frac{6b}{\BB}
\left(
4b^4-3b^2+1
-
b^2\BB
\right).
\end{split} 
\end{equation*}
It is direct to show that $4b^4-3b^2+1
>
b^2\BB$ and
therefore $E'(b)>0$ for
$b\in\left[\frac1{\sqrt3},\sqrt{\frac23}\right)$.

Consequently, we obtain
\[
E(b)
\geq
E\left(\frac1{\sqrt3}\right)
=
-\frac13\sqrt{\frac23}
+
\frac13
=
\frac13\left(1-\sqrt{\frac23}\right)
>0,
\qquad
b\in
\left[
\frac1{\sqrt3},
\sqrt{\frac23}
\right).
\]
Substituting this into \eqref{eq:J2-large-b} yields $
J_2'(b)>0,$ 
$b\in
\left[
\frac1{\sqrt3},
\sqrt{\frac23}
\right)$. 

Combining the two cases, there exists a unique
$b_*\in\left(\frac12,\frac1{\sqrt3}\right)$ such that
$J_2'(b_*)=0$, that is,
\[
\min_{0<b<\sqrt{\frac23}}J_2(b)
=
J_2(b_*).
\]
\end{proof}

\begin{lemma}\label{lem:J2-lower-bound}
We have that $J_2(b_*)>\frac{\pi^2}{2}\sqrt{5+4\sqrt{11}}.$
\end{lemma}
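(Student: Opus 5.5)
The plan is to bound $J_2(b_*)$ from below by combining three elementary facts: the location $b_*\in\left(\frac12,\frac1{\sqrt3}\right)$ from Lemma \ref{lemma-b-small}, the monotonicity of $P$ from Lemma \ref{lemma-P(x)}, and the AM--GM inequality. As in the proof of Lemma \ref{lemma-b-small}, I write
\[
J_2(b)=\pi^2\left(2b+\frac{P(\BB(b))}{b}\right),\qquad P(x)=\frac12+\frac{x}{\pi}\cot\frac{\pi x}{2}.
\]
The target constant suggests an AM--GM step: $2b+\frac{c}{b}\ge 2\sqrt{2c}$, and $2\sqrt{2c}=\frac12\sqrt{5+4\sqrt{11}}$ exactly when $c=\frac{5+4\sqrt{11}}{32}$. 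So it suffices to show $P(\BB(b_*))>\frac{5+4\sqrt{11}}{32}$.

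\textbf{Step 1: locate $\BB(b_*)$.} Put $\mathfrak t=b^2$, so that $\BB^2=1-2\mathfrak t+3\mathfrak t^2$. This is strictly decreasing for $\mathfrak t<\frac13$. Since $b_*\in\left(\frac12,\frac1{\sqrt3}\right)$, we get
\[
\sqrt{\tfrac23}\le \BB(b_*)<\BB\!\left(\tfrac12\right)=\sqrt{1-\tfrac12+\tfrac3{16}}=\tfrac{\sqrt{11}}{4}.
\]
By Lemma \ref{lemma-P(x)}, $P'<0$ on $\left[\sqrt{\frac23},1\right)$, hence $P(\BB(b_*))>P\!\left(\frac{\sqrt{11}}4\right)$.

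\textbf{Step 2: bound $P(\sqrt{11}/4)$ from below.} Set $y=\frac{\pi\sqrt{11}}{8}\in\left(0,\frac\pi2\right)$. For such $y$ we have $\cot y=\tan\left(\frac\pi2-y\right)>\frac\pi2-y$. Therefore
\[
P\!\left(\tfrac{\sqrt{11}}4\right)>\frac12+\frac{\sqrt{11}}{4\pi}\left(\frac\pi2-\frac{\pi\sqrt{11}}8\right)=\frac12+\frac{\sqrt{11}}8-\frac{11}{32}=\frac{5+4\sqrt{11}}{32}.
\]
This is exactly the constant $c$ identified above.

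\textbf{Step 3: conclude.} Steps 1 and 2 give $P(\BB(b_*))>c$. Then AM--GM yields
\[
J_2(b_*)>\pi^2\left(2b_*+\frac{c}{b_*}\right)\ge 2\pi^2\sqrt{2c}=\frac{\pi^2}{2}\sqrt{5+4\sqrt{11}}.
\]

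The main subtlety is recognising that the tangent-type bound $\cot y>\frac\pi2-y$ is precisely what produces the stated constant. The margin is very small (about $0.0709$ against $0.07083$), so a cruder bound such as $P>\frac12$ would fail. Beyond that, I only need to check that $\BB(b_*)$ lies in the domain $\left[\sqrt{\frac23},1\right)$ where the monotonicity of $P$ from Lemma \ref{lemma-P(x)} applies.
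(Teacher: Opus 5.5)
Your proof is correct and follows essentially the paper's route. You locate $\BB(b_*)\in\bigl(\sqrt{2/3},\sqrt{11}/4\bigr)$, use $\cot\theta=\tan(\frac{\pi}{2}-\theta)>\frac{\pi}{2}-\theta$ to get $P(\BB(b_*))>\frac{5+4\sqrt{11}}{32}$, and finish with AM--GM. The only difference is the order of the middle steps: you invoke $P'<0$ from Lemma \ref{lemma-P(x)} and then apply the tangent bound at the single point $\sqrt{11}/4$. The paper instead applies the tangent bound pointwise to get $P(\BB)>\frac12+\frac{\BB(1-\BB)}{2}$, then uses that $\BB(1-\BB)$ is decreasing on $(\frac12,1)$, which avoids depending on the omitted proof of Lemma \ref{lemma-P(x)}.
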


\begin{proof}
Lemma~\ref{lemma-b-small} shows that $\frac12<b_*<\frac1{\sqrt3}$. When $ \frac12<b<\frac1{\sqrt3}$, we have
$\BB'(b)=\frac{2b(3b^2-1)}{\BB(b)}<0.$
Set $\BB_*=\BB(b_*)$, we obtain
$
\sqrt{\frac23}<\BB_*<\frac{\sqrt{11}}4$. Since $
\cot\frac{\pi\BB}{2} = \tan\frac{\pi(1-\BB)}2 >
\frac{\pi(1-\BB)}2$, it follows that
\[
P(\BB)=\frac{1}{2}+\frac{\BB}{\pi}\cot\frac{\pi\BB}{2} 
>
\frac12+\frac{\BB(1-\BB)}2.
\]
Since $\BB_*>\frac{1}{2}$ and $\BB(1-\BB)$ is decreasing on
$(\frac{1}{2},1)$, we get 
$
P(\BB_*)
>
\frac12+
\frac12\frac{\sqrt{11}}4
\left(1-\frac{\sqrt{11}}4\right)
=
\frac{5+4\sqrt{11}}{32}.
$ Hence
${J_2(b_*)}
>{\pi^2}\left(
2b_*+
\frac{5+4\sqrt{11}}{32b_*}\right)\geq \frac{\pi^2}{2}\sqrt{5+4\sqrt{11}}.
$
\end{proof}
\normalfont
\vspace{2mm}
\textbf{Declaration on the use of AI.}
The AI tool ChatGPT 5.6 Sol was used in the proofs of Proposition \ref{lem:u-upper} in Section \ref{section4}, Lemma \ref{lem-piecewise-lower-bound} in Section \ref{section5}. The authors have independently verified all AI-assisted content, and take full responsibility for the contents of the paper. \\

$\mathbf{Funding~Declarations}$: PW is  supported by the Natural Science Foundation of Fujian Province of China (Grant No. 2026J002028) and  NSFC (Grant No. 12371052).\hfill \\

$\mathbf{Data~Availability~Declarations}$: The data that support the findings of this study are available from the corresponding author upon reasonable request.\hfill \\

$\mathbf{Ethics~And~Consent~To~Participate~Declarations}$: Not applicable - This research does not involve any human or animal participants.

\def\refname

{\normalsize\HT Reference}
\begin{thebibliography}{99}
\small\setlength{\parskip}{0pt}


\bibitem{B-H-K}
B\'{e}rard P. H., Helffer B., Kiwan R.
\emph{Courant-sharp eigenvalues of compact flat surfaces: Klein bottles and cylinders},
Proc. Amer. Math. Soc. {150} (2022), no. 1, 439--453.

\bibitem{B-H-M}
Breuning P., Hirsch J., M\"{a}der-Baumdicker E.
\emph{Existence of minimizing Willmore Klein bottles in Euclidean four-space},
Geom. Topol. {21} (2017), no. 4, 2485--2526.

\bibitem{B-1988}
Bryant R. L.
\emph{Surfaces in conformal geometry},
Proc. Sympos. Pure Math. {48} (1988), 227--240.

\bibitem{B-2015}
Bryant R. L.
\emph{On the conformal volume of 2-tori},
arXiv preprint arXiv:1507.01485 (2015).

\bibitem{BCGG}
Bujalance E., Cirre F. J., Gamboa J. M., Gromadzki G.
\emph{Symmetries of compact Riemann surfaces},
Lecture Notes in Mathematics, vol. 2007,
Springer, Berlin, Heidelberg, 2010.


\bibitem{Chen1981}
Chen B. Y.
\emph{On the total curvature of immersed manifolds, V: C-surfaces in Euclidean $m$-space},
Bull. Inst. Math. Acad. Sinica {9} (1981), no. 4, 509--516.

\bibitem{Chen2015}
Chen B. Y.
\emph{Total Mean Curvature and Submanifolds of Finite Type},
2nd ed., World Scientific, Singapore, 2015.


\bibitem{H-M}
Hirsch J., M\"{a}der-Baumdicker E.
\emph{A note on Willmore minimizing Klein bottles in Euclidean space},
Adv. Math. {319} (2017), 67--75.

\bibitem{J-N-P}
Jakobson D., Nadirashvili N., Polterovich I.
\emph{Extremal metric for the first eigenvalue on a Klein bottle},
Canad. J. Math. {58} (2006), no. 2, 381--400.

\bibitem{K-1989}
Kusner R. B.
\emph{Comparison surfaces for the Willmore problem},
Pacific J. Math. {138} (1989), no. 2, 317--345.

\bibitem{K-1996}
Kusner R. B.
\emph{Estimates for the biharmonic energy on unbounded planar domains,
and the existence of surfaces of every genus that minimize the
squared-mean-curvature integral},
in: Chow B., Gulliver R., Levy S., Sullivan J. M. (eds.), \emph{Elliptic and Parabolic Methods in Geometry},
A K Peters, Wellesley, MA, 1996, pp.~67--72.


\bibitem{Lawson-1970}
Lawson H. B.
\emph{Complete minimal surfaces in $S^3$},
Ann. of Math. {92} (1970), no. 3, 335--374.

\bibitem{L-Y}
Li P., Yau S.-T.
\emph{A new conformal invariant and its applications to the Willmore conjecture and the first eigenvalue of compact surfaces},
Invent. Math. {69} (1982), no. 2, 269--291.

\bibitem{L-X-W}
L\"{u} Y., Wang P., Xie Z.
\emph{Minimal isometric immersions of flat $n$-tori into spheres},
arXiv preprint arXiv:2504.13064 (2025).

\bibitem{M-N2}
Marques F. C., Neves A.
\emph{Min-max theory and the Willmore conjecture},
Ann. of Math. {179} (2014), no. 2, 683--782.

\bibitem{M-N1}
Marques F. C., Neves A.
\emph{The Willmore conjecture},
Jahresber. Dtsch. Math.-Ver. {116} (2014), no. 4, 201--222.


\bibitem{M-R}
Montiel S., Ros A.
\emph{Minimal immersions of surfaces by the first eigenfunctions and conformal area},
Invent. Math. {83} (1986), no. 1, 153--166.

\bibitem{T-1941}
Tompkins C. B.
\emph{A flat Klein bottle isometrically embedded in Euclidean $4$-space},
Bull. Amer. Math. Soc. {47} (1941), no. 6, 508.

\bibitem{WangWang2022}
Wang C., Wang P.
\emph{A survey on Willmore surfaces and Willmore conjecture in $n$-dimensional sphere},
J. Fujian Normal Univ. (Nat. Sci. Ed.) {38} (2022), 1--10.


\bibitem{W-1965}
Willmore T. J.
\emph{Note on embedded surfaces},
An. \c{S}tiin\c{t}. Univ. ``Al. I. Cuza'' Ia\c{s}i Sec\c{t}. I a Mat.
{11B} (1965), 493--496.
\end{thebibliography}
 \end{document}